\newtheorem{theorem}{Theorem}[section]
\newtheorem{proposition}[theorem]{Proposition}
\newtheorem{prop}[theorem]{Proposition}
\newtheorem{corollary}[theorem]{Corollary}
\newtheorem{lemma}[theorem]{Lemma}
\theoremstyle{definition}
\newtheorem{definition}[theorem]{Definition}
\theoremstyle{remark}
\newtheorem{remark}[theorem]{Remark}
\numberwithin{equation}{section}
\begin{document}

\title{Rational genus and Heegaard Floer homology}

%    Information for first author
\author{Zhongtao Wu}
%    Address of record for the research reported here
\address{Department of Mathematics, The Chinese University of Hong Kong}
\email{ztwu@math.cuhk.edu.hk}
%Current address
%\curraddr{Department of Mathematics and Statistics,
%Case Western Reserve University, Cleveland, Ohio 43403}
\author{Jingling Yang}
\address{School of mathematics, Xiamen University}
\email{yangjingling@xmu.edu.cn}
% \thanks will become a 1st page footnote.
\thanks{} %The first author was supported in part by NSF Grant \#000000.

%    Information for second author
%\author{Author Two}
%\address{Mathematical Research Section, School of Mathematical Sciences,
%Australian National University, Canberra ACT 2601, Australia}
%\email{two@maths.univ.edu.au}
%\thanks{Support information for the second author.}

%    General info
%\subjclass[2000]{Primary 54C40, 14E20; Secondary 46E25, 20C20}

%\keywords{rational slice genus}

\begin{abstract}
Turaev defined a function on the first homology of a rational homology 3-sphere $Y$ as the minimal rational Seifert genus of all knots in this homology class. Ni and the first author discovered a lower bound of this function using the Heegaard Floer $d$-invariant and showed that Floer simple knots are rational Seifert genus minimizers. In this paper, we give a simple reproof of the above results. We then define a version of rational slice genus for knots in the product 4-manifold $Y\times I$ and investigate the analogous minimal genus problem. We prove the same lower bound in terms of the $d$-invariant formula and the same genus minimizers given by Floer simple knots.
\end{abstract}

\maketitle

\section{Introduction}

In this paper, we reprove Ni-Wu's result about a 3-dimensional minimal genus problem for rationally null-homologous knots, and then study an analogous 4-dimensional minimal genus problem for rationally null-homologous knots. 

Let $K$ be a knot of order $p$ in a rational homology 3-sphere $Y$. Let $M=Y-N^{\circ}(K)$, where $N^{\circ}(K)$ denotes the interior of $N(K)$, a solid torus neighborhood of $K$. Note that the inclusion map
\[i_{\ast}:H_{1}(\partial M; \mathbb{Z}) \rightarrow H_1(M; \mathbb{Z})\]
has kernel isomorphic to $\mathbb{Z}$, which is generated by $k \lambda_r$ for some primitive class $\lambda_r \in H_1(\partial M; \mathbb{Z})$ and positive integer $k$. The class $\lambda_r$ is well-defined up to orientation, which therefore gives a well-defined slope, called the \textit{rational longitude} for $K$. A \textit{rational Seifert surface} for $K$ is a properly embedded, compact, connected, oriented surface $S \subset M$ with $[\partial S]=k\lambda_r \in H_1(\partial M; \mathbb{Z})$. The \textit{rational Seifert genus} for $K$ is defined as:

\begin{comment}
\textbf{Description 2} Let $K$ be a knot of order $p$ in a rational homology 3-sphere $Y$. Let $M=Y-N^{\circ}(K)$, where $N^{\circ}(K)$ denotes the interior of $N(K)$, a solid torus neighborhood of $K$. First notice that the inclusion map
\[i_{\ast}:H_{1}(\partial M; \mathbb{Z}) \rightarrow H_1(M; \mathbb{Z})\]
has kernel isomorphic to $\mathbb{Z}$, which is generated by $k \lambda_r$ for some primitive class $\lambda_r \in H_1(\partial M; \mathbb{Z})$ and positive integer $k$. \textcolor{red}{Note that the class $\lambda_r$ is well-defined up to orientation, which therefore gives a well-defined slope, called the \textit{rational longitude} for $K$. In fact, we choose the orientation such that $k \lambda_r=p[K] \in H_1(N(K))$}. Note that $p$ is a multiple of $k$. A \textit{rational Seifert surface} for $K$ \textcolor{red}{(or equivalently, \textit{$p$-Seifert surface in \cite{CG}})} is a properly embedded, connected, oriented surface $S \subset M$ with $[\partial S]=k \lambda_r$ \textcolor{red}{(then $[\partial S]=k[\lambda_r]=p[K] \in H_1(N(K))$).} The unique slope $\lambda_r$, which satisfies the above property, is called the \textit{rational longitude} for $K$. The \textit{rational Seifert genus} for $K$ is defined as:

\end{comment}

\[\|K\|_{Y}:=\mathop{\min}\limits_{S} \dfrac{-\chi(S)}{2|[\mu] \cdot [\partial S]|}= \mathop{\min}\limits_{S} \dfrac{-\chi(S)}{2p},\]
where $\mu$ denotes the meridian of $K$ and $S$ is any rational Seifert surface for $K$, c.f. \cite{CG}\cite{LRS}. We will show later (in Section \ref{4-dimensional rational slice genus}) that $|[\mu] \cdot [\partial S]|=p$, which then implies that $p$ is a multiple of $k$.

%Minimizing the rational Seifert genus gives a version of minimal genus function $g_Y$ defined on the torsion subgroup of $H_1(Y)$, that is,  $$g_Y(a):=\mathop{\min}\limits_{K \subset Y, [K]=a}  \|K\|_{Y}$$ measures the minimal rational Seifert genus of all knots in the given torsion class $a \in H_1(Y)$.  This minimal genus function turns out to be equivalent to the function $\Theta(a)$ defined by Turaev in terms of folded surface \cite{Tur2}.

Minimizing the rational Seifert genus gives the so-called Tureav function \cite{Tur2} defined on the torsion subgroup of $H_1(Y; \mathbb{Z})$, that is,  
\[\Theta(a):=\mathop{\min}\limits_{K \subset Y, [K]=a} 2 \|K\|_{Y}\] for each torsion class $a \in H_1(Y; \mathbb{Z})$.  In \cite{NW}, Ni and the first author gave a novel bound of $\Theta(a)$ in terms of the Heegaard Floer $d$-invariant, and it was also shown that Floer simple knots are rational Seifert genus minimizers in the respective homology classes.  As the original proof was slightly technical that involved ad hoc combinatorial argument, the first half of this paper is devoted to gaining a deeper understanding of the set 
$\{d(Y,\mathfrak{s})-d(Y,\mathfrak{s}+PD[K])\, | \, \mathfrak{s}\in {\rm Spin^c}(Y)\}$ by exhibiting it as the Alexander grading of certain naturally arising middle-level relative $\rm Spin^c$ structures, and using this approach to give a significantly simplified proof of the following statement.  %from which a much simpler and conceptual reproof of the following statement is immediate. 

\begin{theorem}{\rm \cite[Theorem 1.1, Theorem 1.2]{NW}}
\label{Rasmussen conjecture NW}
Let $K$ be a knot in a rational homology 3-sphere $Y$. Then 
\begin{equation}
\label{NiWuoldresult}
2\|K\|_Y + 1 \geq \mathop{\max}\limits_{\mathfrak{s} \in {\rm Spin^c}(Y)} \left\{ d(Y,\mathfrak{s})-d(Y,\mathfrak{s}+PD[K]) \right\}.
\end{equation}
Floer simple knots attain the equality, and consequently they are rational Seifert genus minimizers in their homology classes.
\end{theorem}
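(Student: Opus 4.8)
The plan is to exhibit the set $\{d(Y,\mathfrak{s})-d(Y,\mathfrak{s}+PD[K])\mid\mathfrak{s}\in{\rm Spin^c}(Y)\}$ as a set of differences of Alexander gradings inside the knot Floer chain complex of $K$, and then extract both \eqref{NiWuoldresult} and its equality case from that description. Let $M=Y-N^{\circ}(K)$, and work with the set $\underline{{\rm Spin^c}}(Y,K)$ of relative ${\rm Spin^c}$ structures — an affine space over $H_{1}(M;\mathbb{Z})$ — together with the filling map $G_{Y}\colon\underline{{\rm Spin^c}}(Y,K)\to{\rm Spin^c}(Y)$, whose fibres are torsors over the $\mathbb{Z}$ generated by $PD[\mu]$. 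Normalize the Alexander grading $A\colon\underline{{\rm Spin^c}}(Y,K)\to\mathbb{Q}$ by evaluating $c_{1}$ against a capped rational Seifert surface, so that $A$ increases by $1$ under $+PD[\mu]$ and the $E_{1}$-page of the Alexander spectral sequence on $\widehat{CF}(Y,\mathfrak{s})$ is $\bigoplus_{G_{Y}(\underline{\xi})=\mathfrak{s}}\widehat{HFK}(Y,K,\underline{\xi})$. Two soft inputs are then available. First, an Euler-characteristic estimate for a Heegaard diagram subordinate to a minimal-complexity rational Seifert surface shows $\widehat{HFK}(Y,K,\underline{\xi})=0$ once $A(\underline{\xi})$ leaves an interval of length $2\|K\|_Y+1$; this is the elementary half of Ni's rational-genus detection, which is all that is needed here. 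Second, for each $\mathfrak{s}$ the class $\theta_{\mathfrak{s}}\in\widehat{HF}(Y,\mathfrak{s})$ of grading $d(Y,\mathfrak{s})$ singled out by the $U$-tower occupies a well-defined Alexander filtration level $\nu(\mathfrak{s})\in\mathbb{Q}$, and this level is the Alexander grading of some relative ${\rm Spin^c}$ structure over $\mathfrak{s}$ with $\widehat{HFK}\neq 0$ — a ``middle-level'' structure in the sense of the introduction.

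The heart of the matter is the identity
\[
d(Y,\mathfrak{s})-d(Y,\mathfrak{s}+PD[K])=\nu(\mathfrak{s})-\nu(\mathfrak{s}+PD[K]).
\]
I would establish it through the large-surgery formula for a rationally null-homologous knot: for $N\gg 0$, $HF^{+}$ of the surgery on $K$ is computed by the subquotient complexes $A^{+}_{\underline{\xi}}$, whose correction terms have the form $d(Y,G_{Y}(\underline{\xi}))-2V_{\underline{\xi}}$ with integers $V_{\underline{\xi}}\ge 0$ that vanish exactly when $A(\underline{\xi})$ is at least the filtration level $\nu(G_{Y}(\underline{\xi}))$; matching this against the parametrization of ${\rm Spin^c}$ structures on the surgered manifold, the $V_{\underline{\xi}}$-contributions telescope away in consecutive $d$-invariant differences and leave precisely the stated identity. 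Granting it, \eqref{NiWuoldresult} is immediate: both $\nu(\mathfrak{s})$ and $\nu(\mathfrak{s}+PD[K])$ are Alexander gradings of relative ${\rm Spin^c}$ structures carrying nonzero $\widehat{HFK}$, so by the first soft input they differ by at most $2\|K\|_Y+1$, and maximizing over $\mathfrak{s}$ yields the inequality.

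For a Floer simple knot $K$, $\widehat{HFK}(Y,K,\underline{\xi})$ is $\mathbb{F}$ for exactly one $\underline{\xi}$ over each $\mathfrak{s}$ and $0$ otherwise, so the Alexander spectral sequence degenerates and $\nu(\mathfrak{s})$ is literally the Alexander grading of that unique $\underline{\xi}$. One then computes $\max_{\mathfrak{s}}\big(\nu(\mathfrak{s})-\nu(\mathfrak{s}+PD[K])\big)=2\|K\|_Y+1$ by reading the relevant difference directly off the minimal rational Seifert surface that a Floer simple knot carries, so \eqref{NiWuoldresult} is an equality. Finally, the right-hand side of \eqref{NiWuoldresult} depends only on $Y$ and the homology class $[K]\in H_{1}(Y;\mathbb{Z})$, not on the knot; hence equality for one Floer simple representative $K_{0}$ of a class $a$ forces $2\|K'\|_Y+1\ge 2\|K_{0}\|_Y+1$, i.e. $\|K'\|_Y\ge\|K_{0}\|_Y$, for every $K'$ with $[K']=a$, so Floer simple knots minimize the rational Seifert genus in their homology classes. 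I expect the main obstacle to be twofold: making the grading conventions in the rational large-surgery formula line up with the Alexander filtration so that the displayed identity holds on the nose, and carrying out the final ${\rm Spin^c}$-combinatorial computation in the Floer simple case; the remaining steps are either formal or routine Euler-characteristic estimates.
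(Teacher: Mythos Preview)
Your overall strategy---interpreting $d(Y,\mathfrak{s})-d(Y,\mathfrak{s}+PD[K])$ via Alexander gradings in the large-surgery picture---is the same as the paper's, but the central identity you propose is false, and the error stems from conflating two different invariants.

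You define $\nu(\mathfrak{s})$ as the Alexander filtration level of the bottom-of-tower class $\theta_{\mathfrak{s}}\in\widehat{HF}(Y,\mathfrak{s})$; this is a $\tau$-type invariant. You then assert that $V_{\underline{\xi}}=0$ exactly when $A(\underline{\xi})\ge\nu(G_Y(\underline{\xi}))$; that characterizes the $\nu^+$-type invariant, which in general differs from $\tau$. More seriously, the identity
\[
d(Y,\mathfrak{s})-d(Y,\mathfrak{s}+PD[K])=\nu(\mathfrak{s})-\nu(\mathfrak{s}+PD[K])
\]
fails for either reading. The left side depends only on $Y$ and $[K]$, while the right side depends on $K$ itself; already for a Floer simple knot one computes $\nu(\mathfrak{s})-\nu(\mathfrak{s}+PD[K])=\tfrac12\big(d(Y,\mathfrak{s})-2d(Y,\mathfrak{s}+PD[K])+d(Y,\mathfrak{s}+2PD[K])\big)$, which equals $d(Y,\mathfrak{s})-d(Y,\mathfrak{s}+PD[K])$ only when the $d$-invariants along the $PD[K]$-orbit are in arithmetic progression. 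No telescoping argument can repair this, because there is nothing to telescope over in a single $d$-difference.

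The paper's actual mechanism is different: the large-surgery formula, applied to both the $v$-map and the $h$-map, yields
\[
\tfrac12\big(d(Y,\mathfrak{s})-d(Y,\mathfrak{s}+PD[K])\big)=A_{Y,K}(\xi^0_{\mathfrak{s}}),
\]
a \emph{single} Alexander grading---that of the unique relative structure $\xi^0_{\mathfrak{s}}$ over $\mathfrak{s}$ with $V_{\xi^0_{\mathfrak{s}}}=H_{\xi^0_{\mathfrak{s}}}$. Crucially, $\xi^0_{\mathfrak{s}}$ need \emph{not} support nonzero $\widehat{HFK}$, so one cannot feed it directly into Ni's width bound. Instead one observes that $V_{\xi^{\max}_{\mathfrak{s}}}=0$ forces $A^{\mathfrak{s}}_{\max}\ge A_{Y,K}(\xi^0_{\mathfrak{s}})$, then uses the global symmetry $A_{\min}=-A_{\max}$ together with $A_{\max}-A_{\min}=2\|K\|_Y+1$ to conclude. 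Your sketch has the right ingredients (large surgery, $V$-invariants, Ni's bound), but the bridge between $d$-differences and Alexander gradings needs to go through $V=H$ rather than through a difference of filtration levels.
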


\begin{remark}
As the right hand side of (\ref{NiWuoldresult}) only depends on the manifold $Y$ and the homology class of $K$, it gives a lower bound for $1+\Theta(a)$ for the homology class $a=[K]$.
\end{remark}

In particular, Theorem \ref{Rasmussen conjecture NW} implies the {\it Rasmussen conjecture}, which claims that simple knots in lens spaces are genus minimizers in their homology classes \cite{RasNotation}. Our new proof also offers new insight to the uniqueness issue of genus minimizers, which appears to be a rather subtle question.  While Baker \cite{Baker} proved the uniqueness of a genus minimizer when the genus is less than $\frac{1}{4}$, Greene and Ni \cite{GN} disproved the uniqueness in general.  For the purpose of proving the famous Berge conjecture on lens space surgeries \cite{Berge}, one needs to determine whether a simple knot with genus less than $\frac{1}{2}$ is always the unique genus minimizer in its homology class \cite{Hedden}\cite{RasNotation}.  This remains an open question to this day.  

%In view of the famous Berge conjecture on lens space surgeries \cite{Berge}, one may also wonder whether simple knots are the {\it unique} genus minimizers, which turns out to be a more subtle question.  While Baker \cite{Baker} proved the uniqueness of genus minimizer when the genus is less than $\frac{1}{4}$, Greene and Ni disproved the uniqueness in general \cite{GN}.  

\bigskip
Similarly, we can define a 4-dimensional genus. Given $K$, a \textit{Seifert framed rational slice surface} is defined to be a compact, connected, oriented surface $F$ embedded in $Y \times [0,1] - N(K)$ such that $\partial F=F \cap \partial N(K)$ and  $[\partial F]=[\partial S]=k \lambda_r$. Here, $N(K)$ denotes a solid torus neighborhood of $K$ in $Y \times \{1\}$ and $S$ is any rational Seifert surface for $K$. The corresponding \textit{rational slice genus}, or more precisely, \textit{rational slice genus relative to the rational longitude} of $K$ is then defined as:
\[\|K\|_{Y \times I}^{\partial }:=\mathop{\min}\limits_{F} \dfrac{-\chi(F)}{2|[\mu] \cdot [\partial F]|}=\mathop{\min}\limits_{F} \dfrac{-\chi(F)}{2p},\]
where $F$ is any Seifert framed rational slice surface for $K$. 

%Replacing the rational Seifert genus with the rational slice genus, we obtain a 4-dimensional analogue of Turaev's function as follows. Given a torsion class $a \in H_1(Y)$, define \[g_{Y \times I}(a):=\mathop{\min}\limits_{K \subset Y, [K]=a}\|K\|_{Y \times I}^{\partial }\]

Clearly, $\|K\|_{Y \times I}^{\partial }\leq \|K\|_{Y}$. It is natural to ask whether the same $d$-invariant bound (\ref{NiWuoldresult}) remains true when we replace the rational Seifert genus by the rational slice genus. Results in Levine-Ruberman-Strle \cite{LRS} imply the case for $K$ of order 2. We are able to prove the general case:   

%Our second main result shows that the exact same $d$-invariant formula also bounds $\Theta_{Y \times I}(a)$, and Floer simple knots are also rational slice genus minimizers in their respective homology classes.

\begin{theorem}
\label{d-dleqprsg}
 %For primitive homology class, similar bounds.
 Let $K$ be a knot in a rational homology 3-sphere $Y$. Then
 \begin{equation*}
 %\label{rsgbbdinv}
 2\|K\|_{Y \times I}^{\partial } + 1 \geq \mathop{\max}\limits_{\mathfrak{s} \in {\rm Spin^c}(Y)} \left\{ d(Y,\mathfrak{s})-d(Y,\mathfrak{s}+PD[K]) \right\}.
 \end{equation*}
Floer simple knots attain the equality, and consequently they are rational slice genus minimizers in their homology classes.
\end{theorem}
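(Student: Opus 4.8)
The plan is to deduce the equality and minimizer assertions formally from the inequality, and then to prove the inequality by capping off the rational slice surface inside a surgery cobordism and feeding the result into $d$-invariant inequalities. For the formal part, note that pushing the interior of a rational Seifert surface into $Y\times(0,1)$ exhibits $\|K\|^{\partial}_{Y\times I}\le\|K\|_{Y}$ for every knot, so if $K$ is Floer simple then Theorem~\ref{Rasmussen conjecture NW} gives
\[
2\|K\|^{\partial}_{Y\times I}+1\ \le\ 2\|K\|_{Y}+1\ =\ \max_{\mathfrak s\in\mathrm{Spin}^c(Y)}\{d(Y,\mathfrak s)-d(Y,\mathfrak s+PD[K])\},
\]
so once the displayed inequality of the theorem is proved this is an equality. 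Since the right-hand side depends only on $Y$ and $a=[K]$, and every knot $K'$ with $[K']=a$ satisfies $2\|K'\|^{\partial}_{Y\times I}+1\ge\max_{\mathfrak s}\{\cdots\}=2\|K\|^{\partial}_{Y\times I}+1$, a Floer simple $K$ then realizes the least rational slice genus in its homology class. So it remains to prove the inequality.

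Fix $\mathfrak s\in\mathrm{Spin}^c(Y)$ and a Seifert framed rational slice surface $F$ realizing $\|K\|^{\partial}_{Y\times I}$, so $-\chi(F)=2p\,\|K\|^{\partial}_{Y\times I}$ and $\partial F$ is $k$ parallel copies of $\lambda_r$. I would form $W=(Y\times[0,1])\cup h$, the trace of the $\lambda_r$-surgery on $K\subset Y\times\{1\}$: this is a cobordism from $Y$ to $Y_{\lambda_r}(K)$ with $\chi(W)=1$, $\sigma(W)=0$, $H_2(W;\mathbb Z)\cong\mathbb Z$ with vanishing intersection form, and $b_1(Y_{\lambda_r}(K))=1$. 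Capping $F$ with $k$ parallel copies of the meridian disk of the surgery solid torus, which bound the $k$ components of $\partial F$, produces a closed connected oriented surface $\widehat F\subset W$ with $g(\widehat F)=g(F)$, hence $-\chi(\widehat F)=2p\,\|K\|^{\partial}_{Y\times I}-k$, and one checks that $[\widehat F]$ generates $H_2(W;\mathbb Z)$ with $[\widehat F]^2=0$. Then choose $\mathrm{Spin}^c$ structures $\mathfrak t_0,\mathfrak t_1$ on $W$ restricting to $\mathfrak s$ and $\mathfrak s+PD[K]$ on $Y$ and to a common torsion structure $\bar{\mathfrak s}$ on $Y_{\lambda_r}(K)$, possible since the core of $h$ meets $Y$ in $K$.

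Because $b_1(Y_{\lambda_r}(K))=1$, the correction terms $\underline d_{\pm1/2}(Y_{\lambda_r}(K),\bar{\mathfrak s})$ are defined. Applying the $d$-invariant inequality for negative semi-definite cobordisms with such an end (the $b_1=1$ refinement, as in \cite{LRS}) both to $W$ and to the reverse-direction cobordism attached along the dual knot, and invoking the adjunction relation in Heegaard Floer homology for the square-zero surface $\widehat F$ — both to locate the cobordism maps within the $HF^{\infty}$ tower and to insert $\chi(\widehat F)$ into the grading shift — the two one-sided inequalities should combine to
\[
d(Y,\mathfrak s)-d(Y,\mathfrak s+PD[K])\ \le\ \frac{-\chi(\widehat F)+k+p}{p}\ =\ 2\,\|K\|^{\partial}_{Y\times I}+1 ,
\]
and taking the maximum over $\mathfrak s$ gives the theorem. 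The case where $\widehat F$ is a sphere — which includes simple knots in lens spaces and is exactly where the bound is sharp — has to be handled separately, using that a homologically essential sphere of square zero forces the relevant $\mathrm{Spin}^c$ structures and cobordism maps to be very restricted.

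I expect the genuine difficulty to be precisely this last step: arranging the $b_1=1$ correction-term inequality together with adjunction so that $\chi(\widehat F)$ enters with exactly the coefficient $1/p$ above, and so that the inequality for $\mathfrak t_0$ and the inequality for $\mathfrak t_1$ reinforce rather than cancel. This is where the first half of the paper pays off: having identified $d(Y,\mathfrak s)-d(Y,\mathfrak s+PD[K])$ with the Alexander grading of a middle-level relative $\mathrm{Spin}^c$ structure, one can compare both sides against the mapping-cone (knot Floer) description of $\underline d_{\pm1/2}(Y_{\lambda_r}(K),\bar{\mathfrak s})$, after which $\widehat F$ is precisely what controls the quantity that appears there. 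Since $p=2$ is exactly the case already obtained by Levine-Ruberman-Strle, what is really needed is the form of their $d$-invariant inequalities that survives for all $p$.
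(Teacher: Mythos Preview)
Your deduction of the Floer-simple equality and minimizer claim from the inequality plus Theorem~\ref{Rasmussen conjecture NW} is fine and matches the paper. The inequality itself, however, is not proved in your proposal, and the route you outline diverges from the paper in a way that leaves two real gaps.

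First, forming $W=(Y\times[0,1])\cup h$ as a single $2$-handle attached along $\lambda_r$ only makes sense when $\lambda_r$ is a framing, i.e., meets $\mu$ once. In general $\lambda_r=p'\lambda+q'\mu$ with $p'>1$, and there is no such $2$-handle cobordism; your capping construction and the claimed $H_2(W)\cong\mathbb Z$, $\sigma(W)=0$ are then unavailable. The paper deals with this by proving the bound first in the framing case and then reducing the general case via Ozsv\'ath--Szab\'o's Morse surgery trick, replacing $K\subset Y$ by $K'=K\#O_{p'/n}\subset Y'=Y\#L(p',n)$ and tracking both $\nu^+$ and the slice genus through the connected sum. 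Your proposal never addresses this reduction.

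Second, even in the framing case you explicitly stop short of the main step, writing that combining the $b_1=1$ correction-term inequalities with adjunction so that $\chi(\widehat F)$ appears with the right coefficient is ``the genuine difficulty.'' The paper does \emph{not} carry this out via $\underline d_{\pm1/2}$-inequalities in the LRS style. Instead it proves the stronger statement $\nu^+(Y,K)\le\|K\|^{\partial}_{Y\times I}+\tfrac12$ and deduces the $d$-invariant bound from the purely algebraic inequality $\nu^+_{\mathfrak s}(Y,K)\ge\tfrac12 d(Y,\mathfrak s)-\tfrac12 d(Y,\mathfrak s+PD[K])$ (Corollary~\ref{nud}). The $\nu^+$ bound is established by analyzing the circular mapping cone for $\lambda_r$-surgery in Rasmussen's $\{+,-,\circ,\ast\}$ notation: adjunction (factoring the cobordism map through $\widehat F\times S^1$) shows $\mathfrak D^-_{\mathfrak s}$ is surjective whenever the Alexander grading exceeds $g(\widehat F)/p$, while the presence of a $\ast$ forces non-surjectivity; a case split on whether $\mathfrak A^-_{\xi_{\rm top}}$ is $+$ or $\circ$ then finishes, the latter case requiring Hom's splitting criterion and the Hedden--Raoux $\tau$-bound. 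None of this machinery is replaced by a direct two-sided $d$-inequality argument in your sketch, and you yourself flag that you do not have one.
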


% \begin{remark}
% The same $d$-invariant formula also gives a lower bound for $1+\Theta_{Y \times I}(a)$ for the homology class $a=[K]$. 
% \end{remark}

On our way to prove Theorem \ref{d-dleqprsg}, we also generalize the notion of $\nu^+$ invariant to each $\rm Spin^c$ structure $\mathfrak{s}$ in a rational homology 3-sphere. See Section \ref{Spinc structures and Alexander grading} for its precise definition. Similar to the classical slice genus bound $\nu^+(K)\leq g_4(K)$ for knots in the 3-sphere, we prove:
\begin{theorem}
\label{nuleqprsg}
For any knot $K$ in a rational homology 3-sphere $Y$ and $\mathfrak{s}\in {\rm Spin^c}(Y)$, 
%$$\nu^+_\frak{s}(Y, K) \leq g_4(K).$$
$$\nu^+_\mathfrak{s}(Y, K) \leq \|K\|_{Y \times I}^{\partial} + \frac{1}{2}.$$
Consequently,
$$\nu^+(Y, K)=\mathop{\max}\limits_{\mathfrak{s} \in {\rm Spin^c}(Y)} \left\{ \nu^+_{\mathfrak{s}} (Y,K)\right\} \leq \|K\|_{Y \times I}^{\partial} + \frac{1}{2}.$$
In particular, Floer simple knots attain the equality.
\end{theorem}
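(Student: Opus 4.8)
The plan is to realize $\nu^+_\mathfrak{s}(Y,K)$ --- which, per Section \ref{Spinc structures and Alexander grading}, is the smallest Alexander grading $i$ above which the local invariant measuring the failure of the relevant vertical map out of the complex $A^+$ to surject onto the top $U$-tower of $HF^+(Y,\mathfrak s)$ vanishes --- in terms of the $d$-invariants of a large Dehn surgery on $K$, and then to control those $d$-invariants by capping off the rational slice surface $F$ inside a $4$-dimensional $2$-handle cobordism.

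First I would fix a Seifert framed rational slice surface $F\subset Y\times[0,1]-N(K)$ with $-\chi(F)=2p\,\|K\|^{\partial}_{Y\times I}$, and attach a $2$-handle to $Y\times\{1\}$ along a suitably chosen large surgery slope $\gamma$ on $\partial N(K)$, producing a cobordism $W$ from $Y$ to a rational homology sphere $Y_\gamma(K)$. The slope is chosen so that, on the one hand, the rational large surgery formula identifies $HF^+(Y_\gamma(K))$ in the ${\rm Spin}^c$ structures extending $\mathfrak s$ with the homology of the complexes $A^+$, and, on the other hand, $[\partial F]=k\lambda_r$ matches --- after adjoining a surface lying inside $N(K)$ --- the boundary of the core of the $2$-handle. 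This lets $F$ cap off to a closed surface $\widehat F\subset W$ whose genus is controlled linearly by $-\chi(F)$ and whose self-intersection number is a prescribed function of the $2$-handle framing, which we can arrange to have the sign needed below.

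Next I would apply the $d$-invariant adjunction inequality for a $4$-manifold containing a closed embedded surface of nonzero self-intersection --- in the form available for rationally null-homologous surfaces following Ozsv\'ath--Szab\'o and as used by Levine--Ruberman--Strle --- to the pair $(W,\widehat F)$, for a ${\rm Spin}^c$ structure on $W$ that restricts to $\mathfrak s$ on $Y$ and pairs with $[\widehat F]$ in the range indexed by $i$. Combining this with the value of $d(Y_\gamma(K),\cdot)$ supplied by the large surgery formula --- which differs from the corresponding lens-space-type $d$-invariant precisely by (twice) the local invariant above --- forces that local invariant to vanish as soon as $2i-1>-\chi(F)/p=2\,\|K\|^{\partial}_{Y\times I}$. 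Hence $\nu^+_\mathfrak{s}(Y,K)\le\|K\|^{\partial}_{Y\times I}+\tfrac12$, and maximizing over $\mathfrak s\in{\rm Spin}^c(Y)$ gives the second inequality.

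For the equality statement, I would compute $\nu^+_\mathfrak{s}$ directly for a Floer simple knot: the complexes $A^+$ are then all standard $U$-towers whose grading shifts are read off from the Alexander gradings of the distinguished middle-level relative ${\rm Spin}^c$ structures featuring earlier in the paper, giving $\max_{\mathfrak s}\nu^+_\mathfrak{s}(Y,K)=\tfrac12\max_{\mathfrak s}\{d(Y,\mathfrak s)-d(Y,\mathfrak s+PD[K])\}$; by the Floer simple case of Theorem \ref{d-dleqprsg} this equals $\|K\|^{\partial}_{Y\times I}+\tfrac12$. The main obstacle is the second step: choosing a surgery slope for which the rational large surgery formula is available and along which $F$ genuinely caps off, and then carrying out the self-intersection and genus bookkeeping for $\widehat F$ with the right dependence on $p$ and $k$, so that the $d$-invariant inequality outputs exactly the threshold $\|K\|^{\partial}_{Y\times I}+\tfrac12$ rather than a shifted constant; a secondary subtlety is invoking the $d$-invariant adjunction inequality in the generality of a rationally null-homologous closed surface inside a cobordism between rational homology spheres. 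Alternatively one can avoid the $4$-manifold by using the map on $HF^+$ (equivalently on the complexes $A^+$) induced by $F$ regarded as a cobordism of pairs, together with the fact that a genus $g$ cobordism shifts Alexander gradings by $g$, but making this precise for rationally null-homologous knots requires comparable care.
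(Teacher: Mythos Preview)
Your outline differs substantially from the paper's argument, and the obstacle you flag in your second step is real and left unresolved. The paper does \emph{not} work with a large-surgery cobordism. It first reduces, via the Morse-surgery trick of Section~\ref{trick of Morse surgery} (connect-sum with a $U$-knot in a lens space, Corollary~\ref{nu+difference}), to the case where the rational longitude $\lambda_r$ is itself a framing, and then attaches the $2$-handle along $\lambda_r$. This makes the cap honest ($g(\widehat F)=g(F)$ and $[\widehat F]^2=0$), but $Y_{\lambda_r}$ has $b_1=1$, so there are no $d$-invariants to compare. In place of your $d$-invariant inequality the paper analyzes the \emph{circular} mapping cone $\mathfrak D^-_{\mathfrak s}\colon\mathfrak A^-_{\mathfrak s}\to\mathfrak B^-_{\mathfrak s}$ for $\lambda_r$-surgery: adjunction forces $\mathfrak D^-_{\mathfrak s}$ to be surjective once the common Alexander grading is at least $g(\widehat F)/p$ (Proposition~\ref{mappingconeAdjunction}), and Rasmussen's $+,-,\circ,\ast$ labels (Lemmas~\ref{nonsurint}--\ref{notsurpat}) convert non-surjectivity into the presence of a $\ast$, hence $V^-_{\xi_{\rm top}-PD[\mu]}>0$, giving the bound when $\mathfrak A^-_{\xi_{\rm top}}=+$. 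The remaining case $\mathfrak A^-_{\xi_{\rm top}}=\circ$ requires a separate argument using the mirror knot, Hom's splitting criterion (Proposition~\ref{V=V=0}), and the Hedden--Raoux $\tau$-bound --- machinery your proposal does not anticipate.

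The tension you name between ``large enough for the surgery formula'' and ``compatible with $\partial F=k\lambda_r$'' is precisely why the paper takes the detour above. If $\gamma=\lambda_r+n\mu$ is a genuine large framing (even after reducing to the case where $\lambda_r$ is a framing), then $[\widehat F]^2=p^2n\ne 0$, the boundary of a tubular neighborhood is a nontrivial circle bundle rather than $\widehat F\times S^1$, and the clean factoring argument of Section~\ref{4-dimensional rational slice genus} no longer applies; you would need a sharp relative adjunction inequality for surfaces of positive square together with an exact match between its output and the grading shift in the large-surgery formula, and you have supplied neither. When $\lambda_r$ is not a framing the capping is worse still, since $\partial F$ is not a union of curves isotopic to $K$ in $N(K)$. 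Finally, invoking Theorem~\ref{d-dleqprsg} for the Floer-simple equality is circular in the paper's logic, since that theorem is deduced from the present one; the correct input is the $3$-dimensional Theorem~\ref{Rasmussen conjecture NW}.
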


%See Section \ref{Spinc structures and Alexander grading} for its precise definition and its implication of the $d$-invariant genus bound.

In Corollary \ref{nud} below, we prove that for each $\rm Spin^c$ structure $\mathfrak{s}$,
\begin{equation}
\label{nugeqd-d}
\nu^+_\mathfrak{s}(Y, K) \geq  \frac{1}{2}d(Y,\mathfrak{s})-\frac{1}{2}d(Y,\mathfrak{s}+PD[K]),
\end{equation}
and this helps establish the $d$-invariant slice genus bound in Theorem \ref{d-dleqprsg}.

\bigskip
%Finally, we outline a general strategy of the proof:
To conclude the introduction, we outline our approach to the 4-dimensional case below.
\begin{enumerate}

% \item  First, we study those knots whose rational Seifert/slice surface has the framing curves as boundary and apply an adjunction inequality type argument similar to Hedden-Raoux.  More precisely, we consider the cobordism of the cobordism $Y\times I$ with the addition of the 2-handle along the knot.  \textcolor{red}{There are two different decompositions of the cobordism... ?} (Elaborate)

% At the end, we need to understand the "Vanishing" of the cobordism map from $Y$ to $Y_0$ in terms of the V, H invariants.  \\

\item  First, we study those knots whose rational longitude is a framing and apply an adjunction inequality type argument similar to \cite{HR2}.  More precisely, we study the cobordism of $Y \times I$ with a 2-handle attaching along the rational longitude $\lambda_r$ together with a decomposition using the embedded surfaces obtained from the capped-off Seifert framed rational slice surfaces of $K$.  
If we denote $\lambda_r$-surgery along $K$ as $Y_{\lambda_r}$, the adjunction inequality then gives a sufficient condition for the vanishing of the cobordism map from $HF(Y)$ to $HF(Y_{\lambda_r})$.

\item Next, we study the cobordism map from $Y$ to $Y_{\lambda_r}$ using the mapping cone formula.  In particular, we introduce Rasmussen's notation of $+$, $-$, $\circ$, $\ast$, and seek certain ``patterns'' that necessarily lead to (non)vanishing maps. At the end, we are able to understand the vanishing properties of the cobordism map from $Y$ to $Y_{\lambda_r}$ in terms of the $V$ and $H$ invariants, which is by definition related to the $\nu^+$ invariant in Theorem \ref{nuleqprsg}.

% \textcolor{red}{By showing the existence of such patterns for the self-conjugate $\rm Spin^c$ structure $\mathrm{s_0}$, we prove the genus bound $g_4 \geq \nu^+_{s_0}(Y,K)$ for this particular $\mathrm{s_0}$.}\\

\item Finally, we apply Ozsv\'{a}th-Szab\'{o}'s trick of Morse surgery to understand arbitrary knots $K \subset Y$.  %We review its construction and carefully study the symmetry of the $\rm Spin^c$ structures.  These symmetries will enable us to find the desired patterns on 2) for ALL $\rm Spin^c$ structures $\mathfrak{s}$.  This proves $\nu^+(K)\leq g_4$.\\

\end{enumerate}

\subsection{Organization.} The paper is organized as follows. In Section \ref{Spinc structures and Alexander grading}, we make our key observation about Ni-Wu's $d$-invariant genus bound by interpreting it as the Alexander grading of certain distinguished relative $\rm Spin^c$ structure. This leads to a concise 1-page reproof in Section \ref{Rasmussen conjecture} of the Rasmussen conjecture on the 3-dimensional minimal genus problem. In Section \ref{4-dimensional rational slice genus}, we apply the adjunction inequality to analyze the vanishing condition for the cobordism map from $Y$ to $Y_{\lambda_r}$. In Section \ref{Mapping cone formula and Rasmussen notation}, we first recall the mapping cone formula for rationally null-homologous knots and rational longitude surgeries and then introduce Rasmussen's notation to understand the vanishing properties of the cobordism map. In Section \ref{The rational slice genus bound for knots whose rational longitude is a framing}, we show our rational slice genus bound for knots whose rational longitude is a framing. Section \ref{The proof of main theorems} proves the bound for all knots. 

\subsection{Notation.} We fix some notations that will be used throughout the paper. Unless noted otherwise, we let $K$ be a knot of order $p$ in a rational homology 3-sphere $Y$. We use $N(K)$ to denote a solid torus neighborhood of $K$ in $Y$, and $N^{\circ}(K)$ denotes the interior of $N(K)$. Let $\mu$ be the meridian of $K$ and $\lambda$ a framing which is an embedded curve on $\partial N(K)$ intersecting $\mu$ transversely once. Here, $\lambda$ naturally inherits an orientation from $K$. Let $\lambda_r$ denote the rational longitude of $K$, which may or may not be a framing.  Let $S$ denote a rational Seifert surface for $K$, and $F$ denote a Seifert framed rational slice surface for $K$. The capped-off $S$ and $F$ (if existed) are denoted by $\widehat{S}$ and $\widehat{F}$ respectively. Given any surgery slope $\alpha$, the 3-manifold obtained from $\alpha$-surgery along $K$ is denoted by $Y_{\alpha}(K)$, which is sometimes abbreviated as $Y_{\alpha}$ when the surgered knot $K$ is clear.

%\vspace{3mm}
%\noindent{\bf Acknowledgements.}
\subsection{Acknowledgements}
We would like to thank Yi Ni, Jennifer Hom, Matthew Hedden, and Katherine Raoux for helpful discussions. The first author is partially supported by a grant from the Research Grants Council of Hong Kong Special Administrative Region, China (Project No. 14301819). The second author is supported by Fundamental Research Funds for the Central Universities (Project No. 20720230026).

\section{$\rm Spin^c$ structures and Alexander grading}
\label{Spinc structures and Alexander grading}

In this section, we recall the background in Heegaard Floer theory and prove some key lemmas and propositions needed in the following sections to reprove Ni-Wu's old result. 

\subsection{Heegaard Floer homology of large surgeries and knot Floer complexes.}
Let $K$ be a knot of order $p$ in a rational homology 3-sphere $Y$.
Let $\mu$ be the meridian of $K$ and $\lambda$ a framing which is an embedded curve on $\partial N(K)$ intersecting $\mu$ transversely once. Here, $\lambda$ naturally inherits an orientation from $K$. 

We denote the set of relative $\rm Spin^c$ structures over $M=Y-N^{\circ}(K)$ by $\underline{\rm Spin^c}(Y,K)$, which is affinely isomorphic to $H^2(Y,K)$. Recall that for any relative $\rm Spin^c$ structure $\xi \in \underline{\rm Spin^c}(Y,K)$, the \textit{Alexander grading}\footnote{In some literature, the Alexander grading formula has $-[\mu] \cdot [S]$ instead of $+[\mu] \cdot [S]$, e.g.,  \cite{NV}. This is due to opposite knot orientation conventions from doubly-pointed Heegaard diagrams. Our convention agrees with \cite{HL}\cite{OSk}  but differs from \cite{NV}\cite{NW}\cite{OSl}\cite{OSr}.} %When two knot orientations are opposite, the sign of the meridian is reversed.}  
of $\xi$ is defined as:
\[A_{Y,K}(\xi)=\dfrac{\langle c_1(\xi), [S] \rangle + [\mu] \cdot [S]}{2 [\mu] \cdot [S]}=\dfrac{\langle c_1(\xi), [S] \rangle + p}{2p}.  \]

Let $w$ be a vector field on $S^1 \times D^2$ as described in \cite[Subsection 2.2]{OSr} (also see \cite[Subsection 2.2]{NW}), which is also called the distinguished Euler structure in Turaev's book \cite[Chapter \uppercase\expandafter{\romannumeral6}]{Tur}. Gluing this vector field $w$ to a relative $\rm Spin^c$ structure $\xi$ on $M$ gives a natural map:
\[G_{Y, \pm K}: \underline{\rm Spin^c}(Y,K) \rightarrow {\rm Spin^c}(Y) \cong H^2(Y)\]
satisfying
\[G_{Y,\pm K}(\xi+\kappa)=G_{Y, \pm K}(\xi)+i^{\ast}(\kappa),\]
where $\kappa \in H^2(Y,K)$ and $i^{\ast}: H^2(Y,K) \rightarrow H^2(Y)$ is induced from inclusion. Here, $-K$ denotes $K$ with the opposite orientation. We have
\[G_{Y,-K}(\xi)=G_{Y,K}(\xi)+PD[\lambda],\]
where $\lambda$ is the push-off of $K$ inside $M$ using the framing $\lambda$. We call $G_{Y,K}(\xi)$ the \textit{underlying} $\rm Spin^c$ structure of $\xi$ and denote the set of relative $\rm Spin^c$ structures with underlying $\rm Spin^c$ structure $\mathfrak{s}$ by $\underline{\rm Spin^c}(Y,K,\mathfrak{s})$.

For each $\xi \in \underline{\rm Spin^c}(Y,K)$, we can associate to it a $\mathbb{Z} \oplus \mathbb{Z}$-filtered knot Floer complex $C_{\xi}=CFK^{\infty}(Y,K,\xi)$, whose bifiltration is given by $(i,j)=(algebraic, Alexander)$. Let 
\[A^+_{\xi}(K)=C_{\xi}\{\max\{i,j\} \geq 0\} \,\,\, {\rm and} \,\,\, B^+_{\xi}(K)=C_{\xi}\{i \geq 0\},\]
and the complexes 
\begin{align*}
C_{\xi}\{i \geq 0\}& \cong CF^+(Y, G_{Y,K}(\xi)) \cong B^+_{\xi}(K), \\
C_{\xi}\{j \geq 0\}&
\cong CF^+(Y,G_{Y,-K}(\xi)) \cong CF^+(Y,G_{Y,K}(\xi+PD[\lambda])) \cong B^+_{\xi +PD[\lambda]}(K).
\end{align*}
%while $A^+_{\xi}$ is the Heegaard Floer homology of a large surgery $Y_{\lambda+n\mu}(K)$ with $n \gg 0$ in a certain $\rm Spin^c$ structure. More details will be described below.
There are two natural projection maps
\[v^+_{\xi}: A^+_{\xi}(K) \rightarrow B^+_{\xi}(K), \quad h^+_{\xi}: A^+_{\xi}(K) \rightarrow B^+_{\xi+PD[\lambda]}(K),\]
which can be identified with certain cobordism maps that will be described below. 

%Fix $n \gg 0$ and consider the negative definite two-handle cobordism $W'_n$ obtained by turning around the two-handle cobordism from $-Y$ to $-Y_{\lambda+n\mu}(K)$.
Let $W'_n$ be the two-handle cobordism obtained by turning around the two-handle cobordism from $-Y$ to $-Y_{\lambda+n\mu}(K)$. %Note that 
%\[H_2(W'_n) \cong \mathbb{Z} \langle [\widehat{S}] \rangle,\]  
%where $S$ is a rational seifert surface for $K$ and $\widehat{S}$ denotes the capped off rational Seifert surface in $W'_n$. 
Let $\widehat{S}$ denote the capped-off rational Seifert surface $S$ in $W'_n$, then $H_2(W'_n) \cong \mathbb{Z}$ is generated by $[\widehat{S}]$. Let $D \subset W'_n$ denote the core disk of the two-handle attached to $Y$ together with $K \times I$, and let $D^{\ast} \subset W'_n$ denote the cocore disk. We assume they are oriented to intersect positively. Then $[D]$ and $[D^{\ast}]$ generate $H_2(W'_n, Y)$ and $H_2(W'_n,Y_{\lambda+n\mu}(K))$, respectively. Consider the Poincar\'{e} duals $PD[D] \in H^2(W'_n,Y_{\lambda+n\mu}(K))$ and $PD[D^{\ast}] \in H^2(W'_n, Y)$; by a slight abuse of notation, we also use $PD[D]$ and $PD[D^{\ast}]$ to denote the images of these classes in $H^2(W'_n)$.  Recall $K$ represents a class of order $p$ in $H_1(Y)$, and suppose $[\partial S]=p(\lambda+n \mu)-k\mu$ in $H_1(\partial M)$ for some $k \in \mathbb{Z}$, then $[\widehat{S}]$ maps to $-p[D] \in H_2(W'_n, Y)$ and to $k[D^{\ast}] \in H_2(W'_n,Y_{\lambda+n\mu}(K))$. It follows $[\widehat{S}]^2=-pk$. When $n$ is sufficiently large, $W'_n$ is negative definite.

%It is easy to verify that 
%\[H_2(W'_n,Y) \cong \mathbb{Z} \langle [D] \rangle,\]
%where the generator $[D]$ is the class of the core of the two-handle attached to $Y$ and $PD[D] \vert_Y=PD[K]$. When $n$ is sufficiently large, $W'_n$ is negative definite. 

As in \cite[Proposition 2.2]{OSr}, Ozsv\'{a}th and Szab\'{o} construct a bijection
\[E_{Y,\lambda+n\mu, K}: {\rm Spin^c}(W'_n) \rightarrow \underline{\rm Spin^c}(Y,K),\]
which restricts a ${\rm Spin^c}$ structure on $W'_n$ to the knot complement. Note that $E_{Y,\lambda+n\mu,K}$ depends on $n$. 
Hedden and Levine \cite[Remark 4.3]{HL} show that for any $\mathfrak{v} \in {\rm Spin^c}(W'_n)$, $E_{Y,\lambda+n\mu, K}(\mathfrak{v})$ is the relative $\rm Spin^c$ structure that satisfies:
\begin{align}
G_{Y,K}(E_{Y,\lambda+n\mu, K}(\mathfrak{v}))=\mathfrak{v}|_Y, \,\, {\rm and} \,\,\,  A_{Y,K}(E_{Y,\lambda+n\mu, K}(\mathfrak{v}))=\dfrac{\langle c_1(\mathfrak{v}), [\widehat{S}] \rangle + k}{2p}. \label{HLEA}
\end{align}

The following theorem explains how $v^+_{\xi}$ and $h^+_{\xi}$ can be identified with certain cobordism maps. 

\begin{theorem}{\rm \cite[Theorem 4.1]{OSr}}
Let $K$ be a rationally null-homologous knot in a closed, oriented 3-manifold $Y$, equipped with a framing $\lambda$. Then, for all sufficiently large $n$, there is a map
\[\Xi: {\rm Spin^c}(Y_{\lambda+n\mu}(K)) \rightarrow \underline{\rm Spin^c}(Y,K)\]
with the property that for all $\mathfrak{t} \in {\rm Spin^c}(Y_{\lambda+n\mu}(K))$, the chain complex $CF^+(Y_{\lambda+n\mu}(K), \mathfrak{t})$ is represented by the chain complex 
\[A^+_{\Xi (\mathfrak{t})}(K)=C_{\Xi(\mathfrak{t})}\{{\max} \{i,j\}\geq 0\},\]
in the sense that there are isomorphisms
\[\Psi^+_{\mathfrak{t},n}: CF^+(Y_{\lambda+n\mu}(K), \mathfrak{t}) \rightarrow A^+_{\Xi (\mathfrak{t})}(K).\]
Furthermore, fix $\mathfrak{t} \in {\rm Spin^c}(Y_{\lambda+n\mu}(K))$, and let $\Xi(\mathfrak{t})=\xi$. There are ${\rm Spin^c}$ structures $\mathfrak{v}_{\xi}$, $\mathfrak{h}_{\xi} \in {\rm Spin^c}(W'_n)$ with $E_{Y,\lambda+n\mu, K}(\mathfrak{v}_{\xi})=\xi$, and $\mathfrak{h}_{\xi}=\mathfrak{v}_{\xi}+PD[D]$ with the property that the maps $v^+_{\xi}$ and $h^+_{\xi}$ correspond to the maps induced by the cobordism $W'_n$ equipped with $\mathfrak{v}_{\xi}$ and $\mathfrak{h}_{\xi}$ respectively. More precisely, the following squares commute:\\
\begin{minipage}{0.5\textwidth}
\begin{equation}
\begin{tikzcd}
 CF^+(Y_{\lambda+n\mu}(K), \mathfrak{t}) \arrow[d,"f_{W'_n,\mathfrak{v}_{\xi}}"] \arrow[r,"\Psi^+_{\mathfrak{t},n}"] &   A^+_{\xi}(K) \arrow[d,"v^+_{\xi}"] \\
 CF^+(Y,G_{Y,K}(\xi)) \arrow[r, "\cong"]  & B^+_{\xi}(K)
\end{tikzcd}\label{cdeq1}
\end{equation}
\end{minipage}
\begin{minipage}{0.52\textwidth}
\begin{equation}
\begin{tikzcd}
 CF^+(Y_{\lambda+n\mu}(K), \mathfrak{t}) \arrow[d,"f_{W'_n,\mathfrak{h}_{\xi}}"] \arrow[r,"\Psi^+_{\mathfrak{t},n}"] &   A^+_{\xi}(K) \arrow[d,"h^+_{\xi}"] \\
CF^+(Y,G_{Y,-K}(\xi)) \arrow[r, "\cong"]  & B^+_{\xi+PD[\lambda]}(K).
\end{tikzcd}\label{cdeq2}
\end{equation}
\end{minipage}
\end{theorem}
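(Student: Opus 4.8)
\emph{Strategy.} The plan is to realize the source complex, the target complex, and the cobordism map simultaneously from a single Heegaard multi-diagram, reading everything off holomorphic polygon counts via the winding-region technique. First I would take a doubly-pointed Heegaard diagram $(\Sigma,\boldsymbol\alpha,\boldsymbol\beta,w,z)$ for $(Y,K)$ adapted to the framing $\lambda$, introduce a winding region near the basepoint $z$, and adjoin a third collection $\boldsymbol\gamma$ of attaching curves obtained from $\boldsymbol\beta$ by handleslides together with one curve $\gamma_0$ realizing the surgery slope $\lambda+n\mu$ and winding $n$ times through the winding region. The resulting triple represents the two-handle cobordism $W'_n$: using the basepoint $w$, the $\boldsymbol\alpha$--$\boldsymbol\beta$ diagram computes $CF^+(Y)$, the $\boldsymbol\alpha$--$\boldsymbol\gamma$ diagram computes $CF^+(Y_{\lambda+n\mu}(K))$, and the triangle count with a top generator $\Theta_{\boldsymbol\beta\boldsymbol\gamma}$ computes $f_{W'_n}$.

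\emph{Identifying the complex.} Next I would examine the intersection points of $\boldsymbol\alpha\cap\boldsymbol\gamma$ lying in the winding region: for $n$ large these organize into a $\mathbb{Z}$-family, indexed by the relative $\rm Spin^c$ structures over a fixed underlying $\rm Spin^c$ structure, and their $(i,j)$-coordinates are governed by the relative Maslov grading and the Alexander grading $A_{Y,K}$. An area/energy filtration argument then shows that, once $n$ exceeds an explicit threshold, the differential of $CF^+(Y_{\lambda+n\mu}(K),\mathfrak t)$ counts only disks supported near the winding region, reproducing the differential of $C_\xi\{\max\{i,j\}\geq 0\}=A^+_\xi(K)$; this yields the isomorphism $\Psi^+_{\mathfrak t,n}$ and the map $\Xi$. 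Comparing the winding-region labeling of generators with the Ozsv\'{a}th--Szab\'{o} bijection $E_{Y,\lambda+n\mu,K}$ through (\ref{HLEA}) pins down $\Xi(\mathfrak t)=\xi$ as the correct relative $\rm Spin^c$ structure.

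\emph{Identifying $v^+$ and $h^+$.} Then I would show that the holomorphic triangles contributing to $f_{W'_n}$ fall into two families according to how many times their domain sweeps across the winding region. The family of nearest triangles becomes, under $\Psi^+_{\mathfrak t,n}$, the vertical quotient map $A^+_\xi(K)\to C_\xi\{i\geq 0\}\cong CF^+(Y,G_{Y,K}(\xi))$, i.e.\ $v^+_\xi$; the family that wraps one extra time around becomes the quotient map onto $C_\xi\{j\geq 0\}\cong CF^+(Y,G_{Y,-K}(\xi))\cong B^+_{\xi+PD[\lambda]}(K)$, i.e.\ $h^+_\xi$. Evaluating the first Chern classes of the associated triangle homology classes against $[\widehat S]$ and against the core and cocore disk classes $[D],[D^\ast]$ shows that the two families carry $\rm Spin^c$ structures $\mathfrak v_\xi,\mathfrak h_\xi\in{\rm Spin^c}(W'_n)$ with $E_{Y,\lambda+n\mu,K}(\mathfrak v_\xi)=\xi$ and $\mathfrak h_\xi=\mathfrak v_\xi+PD[D]$, the shift by $PD[D]$ accounting exactly for the extra wrap. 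Commutativity of the two squares is the assertion that $\Psi^+_{\mathfrak t,n}$ is itself a triangle map, so both diagrams follow from the associativity of holomorphic triangle counts by the standard degeneration argument.

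\emph{Main obstacle.} The hard part will be the $\rm Spin^c$ bookkeeping: verifying that the two triangle families realize precisely $\mathfrak v_\xi$ and $\mathfrak h_\xi=\mathfrak v_\xi+PD[D]$ with $E_{Y,\lambda+n\mu,K}(\mathfrak v_\xi)=\xi$, uniformly for all sufficiently large $n$. This forces one to track the evaluation $\langle c_1(\cdot),[\widehat S]\rangle$ carefully through the winding region and the handle attachment, and to fix the threshold on $n$ so that $W'_n$ is negative definite and no stray polygon contributions survive. Once this is under control, the remaining points — grading-shift formulas and chain-homotopy invariance of the identifications — are routine.
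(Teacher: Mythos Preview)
This theorem is not proved in the paper: it is quoted verbatim as \cite[Theorem 4.1]{OSr} and used as a black box in the subsequent computations (equations (\ref{dV})--(\ref{dH}) and Lemma \ref{vhA}). There is therefore no ``paper's own proof'' to compare against.

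That said, your outline is a faithful sketch of the original Ozsv\'ath--Szab\'o argument in \cite{OSr}. The winding-region setup, the nearest-point identification of generators giving $\Psi^+_{\mathfrak t,n}$, and the splitting of the triangle map into the $v^+$ and $h^+$ components according to how many times domains wrap through the winding region are exactly the ingredients of that proof. Your assessment of the main obstacle is also accurate: the delicate point is the $\rm Spin^c$ bookkeeping, in particular verifying $E_{Y,\lambda+n\mu,K}(\mathfrak v_\xi)=\xi$ and $\mathfrak h_\xi=\mathfrak v_\xi+PD[D]$, which in \cite{OSr} is handled by tracking relative $\rm Spin^c$ structures through the triple diagram and is later sharpened in \cite[Remark 4.3]{HL} to give the explicit formula (\ref{HLEA}) used here.
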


On homology, $v^+_{\xi}$ and $h^+_{\xi}$ induce maps
\[v^+_{\xi,\ast}: H_{\ast}(A^+_{\xi}(K)) \rightarrow H_{\ast}(B^+_{\xi}(K)), \quad h^+_{\xi,\ast}: H_{\ast}(A^+_{\xi}(K)) \rightarrow H_{\ast}(B^+_{\xi+PD[\lambda]}(K)).\]
The Heegaard Floer homology $HF^+$ of any $\rm Spin^c$ rational homology 3-sphere contains a tower $\mathcal{T}^+=\mathbb{F}[U^{-1},U] \slash U \cdot \mathbb{F}[U]$. Define
\[V^+_{\xi}(K)={\rm rank(ker} \,\, v^+_{\xi,\ast}|_{\mathcal{T}^+}), \quad H^+_{\xi}(K)={\rm rank(ker} \,\, h^+_{\xi,\ast}|_{\mathcal{T}^+}) .\]
 Then $v^+_{\xi,\ast}|_{\mathcal{T}^+}$ and $h^+_{\xi,\ast}|_{\mathcal{T}^+}$ are multiplications by $U^{V^+_{\xi}}$ and $U^{H^+_{\xi}}$, respectively.  %\footnote{When the knot $K$ is clear, we may abbreviate $V^+_{\xi}(K)$ and $ H^+_{\xi}(K)$ as $V^+_{\xi}$ and $ H^+_{\xi}$, respectively.}.
%grading homogeneous maps between towers, which are multiplications by $U^{N}$ for some integer $N \geq 0$. We denote the non-negative integers corresponding to $v^+_{\xi}$ and $h^+_{\xi}$ by $V^+_{\xi}(K)$ and $H^+_{\xi}(K)$ respectively. %which are also known as the local $h$-invariants of Rasmussen \cite{Ras1}. 
The two non-negative integers $V^+_{\xi}(K)$ and $H^+_{\xi}(K)$ satisfy that for each $\xi \in \underline{\rm Spin^c}(Y,K)$,
\begin{align}
&V^+_{\xi}(K) \geq V^+_{\xi+PD[\mu]}(K) \geq V^+_{\xi}(K)-1, \label{Vd} \\
&H^+_{\xi}(K) \leq H^+_{\xi+PD[\mu]}(K) \leq H^+_{\xi}(K)+1. \label{Hi}
\end{align}

%See \cite{HW}\cite{NW} for a detailed account.  

\subsection{The $V$ and $H$-invariants and $\nu^+$-invariant.}

In this section, we study the properties of $V$ and $H$-invariants in rational homology 3-spheres and exhibit the existence of certain relative $Spin^c$ structures $\xi^0_\mathfrak{s}$ at which $V^+_{\xi^0_\mathfrak{s}}=H^+_{\xi^0_\mathfrak{s}}$. It is illuminating to note that the Alexander grading of $\xi^0_\mathfrak{s}$ is precisely given by $\frac{1}{2}d(Y, \mathfrak{s})-\frac{1}{2}d(Y, \mathfrak{s}+PD[K])$, as shown in Proposition \ref{middlersc=d}. 

%define $\nu^+$-invariant for rationally null-homologous knots. %Our first observation is that while $V$ and $H$-invariants are defined in terms of the knot Floer complex of $K$, their difference $V-H$ depends only on the homology class of $[K] \in H_1(Y)$.

\begin{lemma}%{\rm \cite[Lemma 3.4]{Yang}}
\label{lemmaV-H}
Let $K$ and $K'$ be two knots in a rational homology 3-sphere $Y$ with $[K]=[K'] \in H_1(Y)$. Then for any relative $\rm Spin^c$ structure $\xi \in \underline{\rm Spin^c}(Y,K)\cong \underline{\rm Spin^c}(Y,K')$,
\[V^+_{\xi}(K)-H^+_{\xi}(K)=V^+_{\xi}(K')-H^+_{\xi}(K')\]
and
\[(V^+_{\xi+PD[\mu]}(K)-H^+_{\xi+PD[\mu]}(K))-(V^+_{\xi}(K)-H^+_{\xi}(K))=-1.\]
\end{lemma}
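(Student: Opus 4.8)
The plan is to exploit the fact that $V^+_\xi$ and $H^+_\xi$ are, up to a shift, the $V$ and $H$ invariants of rational longitude (or $\lambda$-framed) surgeries, which depend only on the knot complement together with the framing data — not on how the complement is glued in. Concretely, $CF^+(Y_{\lambda+n\mu}(K),\mathfrak{t})$ is represented by $A^+_{\Xi(\mathfrak{t})}(K)$ for large $n$, and the maps $v^+_\xi, h^+_\xi$ are cobordism maps for $W'_n$ with prescribed $\mathrm{Spin}^c$ structures. The key observation is that $V^+_\xi(K)-H^+_\xi(K)$ measures the \emph{difference} of the two gradings at which the tower survives, and this difference is controlled by the $d$-invariants of the two boundary components glued to $A^+$, namely $B^+_\xi(K)\cong CF^+(Y,G_{Y,K}(\xi))$ and $B^+_{\xi+PD[\lambda]}(K)\cong CF^+(Y,G_{Y,K}(\xi+PD[\lambda]))$.

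For the first identity: I would use the relation $d(Y_{\lambda+n\mu}(K),\mathfrak{t}) = $ (a universal expression in $n$, the framing, and the intersection data) $- 2V^+_{\Xi(\mathfrak{t})}(K)$ together with the analogous formula involving $H^+$ coming from the other cobordism map in diagram~(\ref{cdeq2}). Subtracting, the $d$-invariant of the large surgery and all the $n$-dependent correction terms cancel, and one is left with $2(V^+_\xi(K)-H^+_\xi(K)) = d(Y,G_{Y,K}(\xi)) - d(Y,G_{Y,K}(\xi+PD[\lambda])) + (\text{grading shifts of the two cobordisms } W'_n \text{ with } \mathfrak{v}_\xi,\mathfrak{h}_\xi)$. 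The grading shift contributed by $f_{W'_n,\mathfrak{v}_\xi}$ versus $f_{W'_n,\mathfrak{h}_\xi}$ is $\frac{c_1(\mathfrak{v}_\xi)^2 - c_1(\mathfrak{h}_\xi)^2}{4}$ plus an Euler-characteristic/signature term that is identical for both, and since $\mathfrak{h}_\xi = \mathfrak{v}_\xi + PD[D]$ this difference is a function of $\langle c_1(\mathfrak{v}_\xi),[\widehat S]\rangle$, hence of $A_{Y,K}(\xi)$ via~(\ref{HLEA}). The upshot: $V^+_\xi(K)-H^+_\xi(K)$ equals an expression involving only $Y$, the class $[K]$, the Alexander grading $A_{Y,K}(\xi)$, and fixed homological data $p,k$ of the \emph{complement with its framing} — none of which changes when we replace $K$ by $K'$ with $[K']=[K]$ and identify $\underline{\mathrm{Spin}^c}(Y,K)\cong\underline{\mathrm{Spin}^c}(Y,K')$ compatibly with Alexander gradings and underlying $\mathrm{Spin}^c$ structures. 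This gives the first equality.

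For the second identity, I would instead argue directly from~(\ref{Vd}) and~(\ref{Hi}): $V^+_{\xi+PD[\mu]}-V^+_\xi \in \{0,-1\}$ and $H^+_{\xi+PD[\mu]}-H^+_\xi \in \{0,1\}$, so $(V^+_{\xi+PD[\mu]}-H^+_{\xi+PD[\mu]}) - (V^+_\xi - H^+_\xi) \in \{-2,-1,0\}$. To pin it to $-1$, note that under $\xi \mapsto \xi+PD[\mu]$ the Alexander grading $A_{Y,K}$ increases by exactly $1$ (since $\langle c_1(\xi+PD[\mu]),[S]\rangle = \langle c_1(\xi),[S]\rangle + 2[\mu]\cdot[S] = \langle c_1(\xi),[S]\rangle + 2p$), while the underlying $\mathrm{Spin}^c$ structure $G_{Y,K}(\xi)=\mathfrak{v}_\xi|_Y$ is unchanged because $i^*(PD[\mu])=0$; likewise $G_{Y,-K}(\xi)$ is unchanged. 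Feeding this into the formula derived in the previous paragraph — where $V^+_\xi - H^+_\xi$ depends on $\xi$ only through $A_{Y,K}(\xi)$, and indeed \emph{linearly} with slope $-1$ once the $c_1^2$ difference is expanded — immediately yields the claimed decrease by $1$. Alternatively and more cleanly, since the first identity already reduces the second to the case of a single convenient knot in the class $[K]$ (e.g. a Floer simple knot, or a knot realizing $\lambda$ as a genuine framing), I can verify the $-1$ shift there by a direct computation with the mapping cone / large surgery formula, where $V^+$ and $H^+$ are explicitly staircase data.

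The main obstacle I anticipate is bookkeeping the grading shifts: correctly tracking the $\frac14(c_1^2 - 2\chi - 3\sigma)$ term for $W'_n$ under the two $\mathrm{Spin}^c$ structures $\mathfrak{v}_\xi$ and $\mathfrak{h}_\xi$, confirming that the $n$-dependent and signature pieces genuinely cancel in the difference, and matching $\langle c_1(\mathfrak{v}_\xi),[\widehat S]\rangle$ to $2p\,A_{Y,K}(\xi)-k$ via~(\ref{HLEA}) with consistent sign conventions (the footnote about Alexander grading conventions is a warning that sign errors are easy here). Once that linear-in-$A_{Y,K}(\xi)$ formula for $V^+_\xi-H^+_\xi$ is established, both identities follow formally.
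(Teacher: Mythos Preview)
Your proposal is correct and follows essentially the same route as the paper's proof: both subtract the $d$-invariant formulas coming from the cobordism maps $f_{W'_n,\mathfrak{v}_\xi}$ and $f_{W'_n,\mathfrak{h}_\xi}$ to obtain an expression for $V^+_\xi-H^+_\xi$ depending only on $d$-invariants of $Y$ and the difference $c_1^2(\mathfrak{v}_\xi)-c_1^2(\mathfrak{h}_\xi)$, which is homological, and then compute the change under $\xi\mapsto\xi+PD[\mu]$ by expanding the $c_1^2$ terms.

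Two minor remarks. First, your phrasing of the second identity via ``linear in $A_{Y,K}(\xi)$ with slope $-1$'' is exactly the content of the paper's Lemma~\ref{vhA}, proved immediately afterward; the paper instead does the $\xi\mapsto\xi+PD[\mu]$ computation directly using $\mathfrak{v}_{\xi+PD[\mu]}=\mathfrak{v}_\xi+PD[D^*]$ and the intersection $\langle PD[D]\cup PD[D^*],[W'_n,\partial W'_n]\rangle=1$, so the two arguments differ only in packaging. Second, your alternative route (b) --- reducing to a Floer simple knot in the class --- is not what the paper does and would be somewhat circular, since the existence of such a knot in an arbitrary homology class is not guaranteed; the direct homological computation is the cleaner path and is what you should carry out. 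Your caution about sign conventions is well placed: tracking whether $(dV)$, $(dH)$ carry $\pm 2V^+_\xi$, $\pm 2H^+_\xi$ and matching this with~\eqref{HLEA} is exactly where care is needed.
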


\begin{proof}
By \cite[Theorem 7.1]{OSc} and commutative diagrams (\ref{cdeq1}) and (\ref{cdeq2}), we have
\begin{align}
d(Y, G_{Y,K}(\xi))-d(Y_{\lambda+n\mu}(K),\mathfrak{t})&=\dfrac{c_1^2(\mathfrak{v}_{\xi})-3\sigma(W'_n)-2 \chi(W'_n)}{4} -2V^+_{\xi}(K), \label{dV}\\
d(Y, G_{Y,-K}(\xi))-d(Y_{\lambda+n\mu}(K),\mathfrak{t})&=\dfrac{c_1^2(\mathfrak{h}_{\xi})-3\sigma(W'_n)-2 \chi(W'_n)}{4} -2H^+_{\xi}(
K). \label{dH} 
\end{align}
Equation (\ref{dH}) $-$ (\ref{dV}) implies
\begin{equation}
V^+_{\xi}(K)-H^+_{\xi}(K)=\frac{1}{2} d(Y, G_{Y,-K}(\xi)) - \frac{1}{2} d(Y,G_{Y,K}(\xi)) +\dfrac{c_1^2(\mathfrak{v}_{\xi})-c_1^2(\mathfrak{h}_{\xi})}{8}, \label{VHd1}
\end{equation}
which is completely determined by homological information.  This proves the first part of the lemma.  

\medskip
Similarly, we have 
\begin{equation}
\begin{split}
&V^+_{\xi+PD[\mu]}(K)-H^+_{\xi+PD[\mu]}(K)\\
&=\frac{1}{2} d(Y, G_{Y,-K}(\xi+PD[\mu])) - \frac{1}{2} d(Y,G_{Y,K}(\xi+PD[\mu])) +\dfrac{c_1^2(\mathfrak{v}_{\xi}+PD[D^{\ast}])-c_1^2(\mathfrak{h}_{\xi}+PD[D^{\ast}])}{8}\\
&=\frac{1}{2} d(Y, G_{Y,-K}(\xi)) - \frac{1}{2} d(Y,G_{Y,K}(\xi)) \\
&\,\,\, +\dfrac{c_1^2(\mathfrak{v}_{\xi})+4[D^{\ast}]^2+4 \langle c_1(\mathfrak{v}_{\xi}) \cup PD[D^{\ast}], [W'_n, \partial W'_n] \rangle-c_1^2(\mathfrak{h}_{\xi})-4[D^{\ast}]^2-4\langle c_1(\mathfrak{h}_{\xi}) \cup PD[D^{\ast}], [W'_n, \partial W'_n]\rangle}{8}\\
&=\frac{1}{2} d(Y, G_{Y,-K}(\xi)) - \frac{1}{2} d(Y,G_{Y,K}(\xi)) \\
& \,\,\, + \dfrac{c_1^2(\mathfrak{v}_{\xi})+ 4 \langle c_1(\mathfrak{v}_{\xi}) \cup PD[D^{\ast}], [W'_n, \partial W'_n] \rangle -c_1^2(\mathfrak{h}_{\xi}) -4\langle c_1(\mathfrak{h}_{\xi}) \cup PD[D^{\ast}], [W'_n, \partial W'_n]\rangle}{8}.
\label{VHd2}
\end{split}
\end{equation}
Consider (\ref{VHd2}) $-$ (\ref{VHd1}),
\begin{equation*}
\begin{split}
(V^+_{\xi+PD[\mu]}&(K)-H^+_{\xi+PD[\mu]}(K))-(V^+_{\xi}(K)-H^+_{\xi}(K))\\
&=\dfrac{4 \langle c_1(\mathfrak{v}_{\xi}) \cup PD[D^{\ast}], [W'_n, \partial W'_n] \rangle -4\langle c_1(\mathfrak{h}_{\xi}) \cup PD[D^{\ast}], [W'_n, \partial W'_n]\rangle}{8}\\
&=\dfrac{4 \langle c_1(\mathfrak{v}_{\xi}) \cup PD[D^{\ast}], [W'_n, \partial W'_n] \rangle -4\langle (c_1(\mathfrak{v}_{\xi})+2PD[D]) \cup PD[D^{\ast}], [W'_n, \partial W'_n]\rangle}{8}\\
&=\dfrac{-8\langle PD[D] \cup PD[D^{\ast}], [W'_n, \partial W'_n] \rangle}{8}\\
&=-1,
\end{split}
\end{equation*}
since $\langle PD[D] \cup PD[D^{\ast}], [W'_n, \partial W'_n] \rangle=1$.
\end{proof}

\begin{prop}
\label{teuxi0s}
Suppose $K$ is a knot in a rational homology 3-sphere $Y$. Then for each $\rm Spin^c$ structure $\mathfrak{s}\in {\rm Spin^c} (Y)$, there exists a unique relative 
$\rm Spin^c$ structure $\xi^0 \in \underline{\rm Spin^c}(Y,K)$ %with $G_{Y,K}(\xi^0)=\mathfrak{s}$ satisfying
with underlying $\rm Spin^c$ structure $\mathfrak{s}$ such that 
$$V^+_{\xi^0}(K)=H^+_{\xi^0}(K).$$
\end{prop}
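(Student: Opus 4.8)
The plan is to reduce the statement to a one‑variable observation about the integer‑valued function $g(\xi):=V^+_{\xi}(K)-H^+_{\xi}(K)$ restricted to the fiber $\underline{\rm Spin^c}(Y,K,\mathfrak{s})$.

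First I would pin down the structure of this fiber. Since $G_{Y,K}$ is equivariant with respect to $i^{\ast}\colon H^2(Y,K)\to H^2(Y)$, the set $\underline{\rm Spin^c}(Y,K,\mathfrak{s})$ is an affine space over $\ker i^{\ast}$. The long exact sequence of the pair $(Y,K)$, together with $H^1(Y;\mathbb{Z})=0$ (as $Y$ is a rational homology sphere) and $H^2(K;\mathbb{Z})=0$ (as $K$ is a circle), shows that $\ker i^{\ast}\cong\mathbb{Z}$; moreover this subgroup is generated by $PD[\mu]$ — the fact already used implicitly in (\ref{Vd}) and (\ref{Hi}), which can also be checked directly via the Poincar\'e--Lefschetz identification $H^2(Y,K)\cong H_1(M)$, under which $i^{\ast}$ becomes the inclusion‑induced map $H_1(M)\to H_1(Y)$ with kernel exactly $\langle[\mu]\rangle$. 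Consequently, after fixing any base point $\xi_0\in\underline{\rm Spin^c}(Y,K,\mathfrak{s})$, every relative $\rm Spin^c$ structure with underlying $\rm Spin^c$ structure $\mathfrak{s}$ is uniquely of the form $\xi_n:=\xi_0+n\cdot PD[\mu]$ with $n\in\mathbb{Z}$.

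Next I would invoke Lemma \ref{lemmaV-H}. Because $V^+_{\xi}(K)$ and $H^+_{\xi}(K)$ are non‑negative integers, $g$ is $\mathbb{Z}$‑valued, and the second identity of that lemma says precisely $g(\xi+PD[\mu])=g(\xi)-1$. Hence $g(\xi_n)=g(\xi_0)-n$ for all $n\in\mathbb{Z}$. The map $n\mapsto g(\xi_0)-n$ is a bijection $\mathbb{Z}\to\mathbb{Z}$, so there is one and only one integer $n_0$ with $g(\xi_{n_0})=0$, i.e.\ with $V^+_{\xi_{n_0}}(K)=H^+_{\xi_{n_0}}(K)$. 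Setting $\xi^0:=\xi_{n_0}$ yields the asserted relative $\rm Spin^c$ structure together with its uniqueness.

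There is no serious obstacle here: the entire arithmetic content is supplied by Lemma \ref{lemmaV-H}. The one point that genuinely needs care is the bookkeeping in the first step, namely that the $\mathbb{Z}$‑torsor structure on $\underline{\rm Spin^c}(Y,K,\mathfrak{s})$ is generated by $PD[\mu]$ rather than by a proper multiple of it; this is exactly what guarantees that the affine function $g$ restricted to the fiber has slope $-1$ over a genuine copy of $\mathbb{Z}$, and therefore attains the value $0$ exactly once.
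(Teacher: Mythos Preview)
Your proof is correct and follows essentially the same approach as the paper: both argue that the fiber $\underline{\rm Spin^c}(Y,K,\mathfrak{s})$ is a $\mathbb{Z}$-torsor generated by $PD[\mu]$, and then apply the second identity of Lemma \ref{lemmaV-H} to conclude that $V^+_\xi-H^+_\xi$ changes by exactly $-1$ along this fiber, hence vanishes precisely once. The only difference is that you spell out the homological argument for why $\ker i^\ast=\langle PD[\mu]\rangle$, whereas the paper simply asserts this fact.
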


\begin{proof}
All relative $\rm Spin^c$ structures with the same underlying $\rm Spin^c$ structure differ by multiples of $PD[\mu]$. Thus, Lemma \ref{lemmaV-H} implies the unique existence of a $\xi^0 \in \underline{\rm Spin^c}(Y,K,\mathfrak{s})$ with $V^+_{\xi^0}(K)=H^+_{\xi^0}(K)$. 
\end{proof}

\begin{definition}
Denote $\xi^0_\mathfrak{s} \in \underline{\rm Spin^c}(Y,K, \mathfrak{s})$ the above unique relative $\rm Spin^c$ structure that satisfies  $V^+_{\xi^0_\mathfrak{s}}(K)=H^+_{\xi^0_\mathfrak{s}}(K)$. We will call $\xi^0_\mathfrak{s}$ the \textit{middle relative $Spin^c$ structure} associated to $\mathfrak{s}$.  

\end{definition}

\begin{comment}
From Lemma \ref{lemmaV-H}, it is clear that 
$\xi^0_\mathfrak{s}$ depends only on the homology class $[K]\in H_1(Y)$.
\end{comment}

\begin{lemma} \label{vhA}
Suppose $K$ is a knot in a rational homology 3-sphere $Y$. The Alexander grading  
\[ A_{Y,K}(\xi)=\frac{1}{8}c_1(\mathfrak{v}_{\xi})^2-\frac{1}{8}c_1(\mathfrak{h}_{\xi})^2\]
for any $\xi \in \underline{\rm Spin^c}(Y,K)$, where $E_{Y, \lambda+n \mu, K}(\mathfrak{v}_{\xi})=\xi$.
\end{lemma}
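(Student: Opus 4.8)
The plan is to compute both sides of the claimed identity using the cobordism $W'_n$ and the explicit description of $\mathfrak{v}_\xi$ and $\mathfrak{h}_\xi$. First I would recall that $\mathfrak{h}_\xi = \mathfrak{v}_\xi + PD[D]$, so
\[
c_1(\mathfrak{h}_\xi) = c_1(\mathfrak{v}_\xi) + 2\,PD[D],
\]
and therefore
\[
c_1(\mathfrak{v}_\xi)^2 - c_1(\mathfrak{h}_\xi)^2 = -4\,\langle c_1(\mathfrak{v}_\xi)\cup PD[D],[W'_n,\partial W'_n]\rangle - 4\,[D]^2.
\]
So the right-hand side $\tfrac18 c_1(\mathfrak{v}_\xi)^2 - \tfrac18 c_1(\mathfrak{h}_\xi)^2$ equals $-\tfrac12\langle c_1(\mathfrak{v}_\xi), [D]\rangle - \tfrac12 [D]^2$. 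Here I use that $PD[D] \in H^2(W'_n, Y_{\lambda+n\mu})$ pairs with $[W'_n,\partial W'_n]$ against a class the same way $[D]$ pairs in $H_2(W'_n,Y)$, i.e. $\langle c_1(\mathfrak{v}_\xi)\cup PD[D],[W'_n,\partial W'_n]\rangle = \langle c_1(\mathfrak{v}_\xi),[D]\rangle$.

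Next I would evaluate $\langle c_1(\mathfrak{v}_\xi), [D]\rangle$ and $[D]^2$ in terms of the homological data already recorded in the excerpt. Recall $[\widehat S]$ maps to $-p[D]$ in $H_2(W'_n,Y)$, so $\langle c_1(\mathfrak{v}_\xi), [\widehat S]\rangle = -p\,\langle c_1(\mathfrak{v}_\xi),[D]\rangle$, giving $\langle c_1(\mathfrak{v}_\xi),[D]\rangle = -\tfrac1p\langle c_1(\mathfrak{v}_\xi),[\widehat S]\rangle$. Also $[\widehat S]^2 = -pk$ and $[\widehat S] = -p[D]$ in the relevant relative homology forces $[D]^2 = -k/p$ (using that $[\widehat S]^2$ computes as $(-p)\langle PD[D], [\widehat S]\rangle$ and $\langle PD[D],[\widehat S]\rangle$ relates to $[D]\cdot$; more precisely $[\widehat S]\cdot[D^\ast] = k$ and $[\widehat S] = -p[D]$ give the self-intersection bookkeeping $-p[D]^2 \cdot (\text{scaling}) $, which I would set up carefully via the pairing $[D]\cdot[D^\ast]=1$). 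Substituting these into the RHS expression yields
\[
\tfrac18 c_1(\mathfrak{v}_\xi)^2 - \tfrac18 c_1(\mathfrak{h}_\xi)^2 = \frac{\langle c_1(\mathfrak{v}_\xi), [\widehat S]\rangle}{2p} + \frac{k}{2p} = \frac{\langle c_1(\mathfrak{v}_\xi), [\widehat S]\rangle + k}{2p},
\]
which by the Hedden–Levine formula (\ref{HLEA}) is exactly $A_{Y,K}(E_{Y,\lambda+n\mu,K}(\mathfrak{v}_\xi)) = A_{Y,K}(\xi)$. This completes the proof.

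The main obstacle I anticipate is the careful bookkeeping of the intersection-theoretic quantities $\langle c_1(\mathfrak{v}_\xi),[D]\rangle$ and $[D]^2$ — in particular keeping track of whether $PD[D]$ lives in $H^2(W'_n)$ or the relative group, which boundary component it is relative to, and the signs introduced by the orientation conventions ($D$ and $D^\ast$ intersect positively, $[\widehat S]\mapsto -p[D]$ versus $+k[D^\ast]$). I would handle this by working consistently with the pairing $\langle PD[D]\cup PD[D^\ast],[W'_n,\partial W'_n]\rangle = 1$ already used in the proof of Lemma \ref{lemmaV-H}, and expressing everything in terms of $[\widehat S]$, whose image in $H_2(W'_n,Y)$ and $H_2(W'_n,Y_{\lambda+n\mu})$ is spelled out in the excerpt. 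Once the signs are pinned down, the identity falls out of a direct substitution into (\ref{HLEA}), so no deep new input is needed beyond what is already on the table.
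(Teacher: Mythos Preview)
Your proposal is correct and follows essentially the same route as the paper's own proof: expand $c_1(\mathfrak{h}_\xi)^2 = c_1(\mathfrak{v}_\xi + PD[D])^2$, rewrite the resulting $\langle c_1(\mathfrak{v}_\xi),[D]\rangle$ and $[D]^2$ in terms of $[\widehat S]$ via $[\widehat S]\mapsto -p[D]$ and $[\widehat S]^2=-pk$, and then invoke the Hedden--Levine formula (\ref{HLEA}). The paper's version is simply a more compressed presentation of the same computation, without the extended discussion of the relative-versus-absolute cohomology bookkeeping.
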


\begin{proof} By direct computation,
\begin{equation*}
\begin{split}
\frac{1}{8}c_1(\mathfrak{v}_{\xi})^2-\frac{1}{8}c_1(\mathfrak{h}_{\xi})^2&=\frac{1}{8}(c_1^2(\mathfrak{v}_{\xi})-c_1^2(\mathfrak{v}_{\xi}+PD[D]))\\
&=\frac{1}{2}(-\langle c_1(\mathfrak{v}_{\xi}), [D] \rangle - [D]^2)\\
&=\frac{1}{2p}(\langle c_1(\mathfrak{v}_{\xi}), [\widehat{S}] \rangle+k)   \qquad (\textit{\rm since $[\widehat{S}]$ maps to $-p[D]$, and} \,\, [\widehat{S}]^2=-pk.) \\
&=A_{Y,K}(\xi) \qquad \qquad \qquad \,\,\,\,\,\,\,\, (\textit{\rm by $E_{Y, \lambda+n \mu, K}(\mathfrak{v}_{\xi})=\xi$ and (\ref{HLEA}).})
\end{split} 
\end{equation*}
\end{proof}

%then prove the following identity

\begin{prop}
\label{middlersc=d}
Suppose $K$ is a knot in a rational homology 3-sphere $Y$.  The Alexander grading of the middle relative $\rm Spin^c$ structure $\xi_\mathfrak{s}^0$ is
\begin{equation}\label{AlexGradingofMiddle}
A_{Y,K}(\xi^0_\mathfrak{s})=\frac{1}{2}d(Y, \mathfrak{s})-\frac{1}{2}d(Y, \mathfrak{s}+PD[K]).
\end{equation}
\end{prop}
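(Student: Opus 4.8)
The plan is to evaluate equation~(\ref{VHd1}) at the middle relative $\rm Spin^c$ structure $\xi^0_\mathfrak{s}$, where the left-hand side vanishes by definition, and then recognize the resulting combination of $c_1^2$ terms as the Alexander grading via Lemma~\ref{vhA}.

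First I would recall that for every $\xi \in \underline{\rm Spin^c}(Y,K)$ equation~(\ref{VHd1}) gives
\[
V^+_{\xi}(K)-H^+_{\xi}(K)=\tfrac12 d\bigl(Y, G_{Y,-K}(\xi)\bigr) - \tfrac12 d\bigl(Y, G_{Y,K}(\xi)\bigr) + \tfrac18\bigl(c_1^2(\mathfrak{v}_{\xi})-c_1^2(\mathfrak{h}_{\xi})\bigr).
\]
Specializing to $\xi=\xi^0_\mathfrak{s}$ and using $V^+_{\xi^0_\mathfrak{s}}(K)=H^+_{\xi^0_\mathfrak{s}}(K)$ from Proposition~\ref{teuxi0s}, the left side is $0$, so
\[
\tfrac18\bigl(c_1^2(\mathfrak{v}_{\xi^0_\mathfrak{s}})-c_1^2(\mathfrak{h}_{\xi^0_\mathfrak{s}})\bigr) = \tfrac12 d\bigl(Y, G_{Y,K}(\xi^0_\mathfrak{s})\bigr) - \tfrac12 d\bigl(Y, G_{Y,-K}(\xi^0_\mathfrak{s})\bigr).
\]
By Lemma~\ref{vhA} the left-hand side equals $A_{Y,K}(\xi^0_\mathfrak{s})$, so it only remains to identify the two underlying $\rm Spin^c$ structures on the right.

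The identification is straightforward: by definition of the middle relative $\rm Spin^c$ structure, $G_{Y,K}(\xi^0_\mathfrak{s})=\mathfrak{s}$; and since $G_{Y,-K}(\xi)=G_{Y,K}(\xi)+PD[\lambda]$ where $\lambda$ is the framed push-off of $K$ into $M$, and any framed push-off of $K$ is homologous to $K$ in $Y$ (the annulus between $K$ and its push-off realizes the homology), we have $PD[\lambda]=PD[K]\in H^2(Y)$, hence $G_{Y,-K}(\xi^0_\mathfrak{s})=\mathfrak{s}+PD[K]$. Substituting yields $A_{Y,K}(\xi^0_\mathfrak{s})=\tfrac12 d(Y,\mathfrak{s})-\tfrac12 d(Y,\mathfrak{s}+PD[K])$, which is~(\ref{AlexGradingofMiddle}).

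The proof is essentially a one-line substitution once Lemma~\ref{vhA} and equation~(\ref{VHd1}) are in hand, so there is no real obstacle; the only point demanding a moment's care is the homological bookkeeping $G_{Y,-K}(\xi^0_\mathfrak{s})=\mathfrak{s}+PD[K]$, and, relatedly, checking that the sign conventions in~(\ref{VHd1}) and Lemma~\ref{vhA} are mutually consistent so that the $d$-invariant difference appears with the orientation in the statement. As a sanity check one can note that for $K$ null-homologous this reduces to $A_{Y,K}(\xi^0)=0$ together with the familiar symmetry $V^+_0=H^+_0$.
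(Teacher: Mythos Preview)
Your proof is correct and follows essentially the same route as the paper: evaluate equation~(\ref{VHd1}) at $\xi^0_\mathfrak{s}$ so the left side vanishes, identify the $c_1^2$ term with the Alexander grading via Lemma~\ref{vhA}, and use $G_{Y,K}(\xi^0_\mathfrak{s})=\mathfrak{s}$, $G_{Y,-K}(\xi^0_\mathfrak{s})=\mathfrak{s}+PD[K]$. The only difference is cosmetic: you spell out why $PD[\lambda]=PD[K]$ in $H^2(Y)$, whereas the paper takes this as understood.
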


\begin{proof}
Considering Equation (\ref{VHd1}) for $\xi_\mathfrak{s}^0$, we have 
\begin{align*}
0=V^+_{\xi_\mathfrak{s}^0}(K)-H^+_{\xi_\mathfrak{s}^0}(K)&=\frac{1}{2} d(Y, G_{Y,-K}(\xi_\mathfrak{s}^0)) - \frac{1}{2} d(Y,G_{Y,K}(\xi_\mathfrak{s}^0)) +\dfrac{c_1^2(\mathfrak{v}_{\xi_\mathfrak{s}^0})-c_1^2(\mathfrak{h}_{\xi_\mathfrak{s}^0})}{8}\\
&=\frac{1}{2} d(Y, \mathfrak{s}+PD[K]) - \frac{1}{2} d(Y,\mathfrak{s}) +\dfrac{c_1^2(\mathfrak{v}_{\xi_\mathfrak{s}^0})-c_1^2(\mathfrak{h}_{\xi_\mathfrak{s}^0})}{8}.
\end{align*}
Lemma \ref{vhA} then implies the proposition.
\end{proof}

We now generalize the definition of $\nu^+$-invariant for knots in $S^3$ to rational homology 3-spheres.

\begin{definition}
\label{nudef}
Let $K$ be a knot in a rational homology 3-sphere $Y$. Given $\mathfrak{s} \in {\rm Spin^c}(Y)$, define 
\[\nu^+_\mathfrak{s}(Y,K):={\min}\{A_{Y,K}(\xi)| \xi \in \underline{\rm Spin^c}(Y,K,\mathfrak{s}) \,\, {\rm and} \,\, V^+_{\xi}(K)=0\}\]
and
\[\nu^+(Y,K):=\mathop{\max}\limits_{\mathfrak{s} \in {\rm Spin^c}(Y)} \nu^+_{\mathfrak{s}}(Y,K).\]
\end{definition}

%\begin{remark}

Let $\xi$ be the relative $\rm Spin^c$ structure which supports $\nu^+_\mathfrak{s}(Y,K)$, that is, $A_{Y,K}(\xi)=\nu^+_\mathfrak{s}(Y,K)$ with $\xi \in \underline{\rm Spin^c}(Y,K,\mathfrak{s})$. Then $V^+_{\xi}(K)=0$. If $H^+_{\xi}(K)>0$, then by Lemma \ref{lemmaV-H}, $\nu^+_\mathfrak{s}(Y,K) > A_{Y,K}(\xi^0_{\mathfrak{s}})$; if $H^+_{\xi}(K)=0$, then by Proposition \ref{teuxi0s}, $\nu^+_\mathfrak{s}(Y,K)=A_{Y,K}(\xi^0_{\mathfrak{s}})$. This together with Proposition \ref{middlersc=d} implies (\ref{nugeqd-d}).
\begin{corollary}
\label{nud}
\[\nu^+_\mathfrak{s}(Y,K) \geq A_{Y,K}(\xi^0_{\mathfrak{s}}) = \frac{1}{2}d(Y, \mathfrak{s})-\frac{1}{2}d(Y, \mathfrak{s}+PD[K]).\]

\end{corollary}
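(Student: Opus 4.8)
The plan is to reduce the inequality to the two results already at hand. The displayed equality $A_{Y,K}(\xi^0_\mathfrak{s}) = \frac12 d(Y,\mathfrak{s}) - \frac12 d(Y,\mathfrak{s}+PD[K])$ is precisely Proposition \ref{middlersc=d}, so the real content is the bound $\nu^+_\mathfrak{s}(Y,K) \geq A_{Y,K}(\xi^0_\mathfrak{s})$. To establish it, I would let $\xi \in \underline{\rm Spin^c}(Y,K,\mathfrak{s})$ be a relative $\rm Spin^c$ structure realizing the minimum in Definition \ref{nudef}, so that $A_{Y,K}(\xi) = \nu^+_\mathfrak{s}(Y,K)$ and $V^+_\xi(K) = 0$; such a $\xi$ exists because $V^+$ vanishes once the Alexander grading is sufficiently large. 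It then suffices to compare $\xi$ with the middle relative $\rm Spin^c$ structure $\xi^0_\mathfrak{s}$.

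The comparison splits according to the sign of $H^+_\xi(K) \geq 0$. If $H^+_\xi(K) = 0$, then $V^+_\xi(K) = H^+_\xi(K) = 0$, and the uniqueness clause of Proposition \ref{teuxi0s} forces $\xi = \xi^0_\mathfrak{s}$, giving $\nu^+_\mathfrak{s}(Y,K) = A_{Y,K}(\xi^0_\mathfrak{s})$. If $H^+_\xi(K) > 0$, then $V^+_\xi(K) - H^+_\xi(K) < 0 = V^+_{\xi^0_\mathfrak{s}}(K) - H^+_{\xi^0_\mathfrak{s}}(K)$. Writing $\xi = \xi^0_\mathfrak{s} + m\,PD[\mu]$ — all relative $\rm Spin^c$ structures with the same underlying $\rm Spin^c$ structure differ by multiples of $PD[\mu]$ — and iterating the second identity of Lemma \ref{lemmaV-H}, I get $V^+_\xi(K) - H^+_\xi(K) = -m$, hence $m > 0$; in particular $m < 0$ is excluded since it would force $V^+_\xi(K) > 0$. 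Finally, since the Alexander grading formula is affine in $\xi$ with $A_{Y,K}(\,\cdot\, + PD[\mu]) = A_{Y,K}(\,\cdot\,) + 1$ (because $c_1$ shifts by $2PD[\mu]$ and $PD[\mu]$ pairs with $[S]$ to give $[\mu]\cdot[S]=p$), I obtain $A_{Y,K}(\xi) = A_{Y,K}(\xi^0_\mathfrak{s}) + m > A_{Y,K}(\xi^0_\mathfrak{s})$. In both cases $\nu^+_\mathfrak{s}(Y,K) \geq A_{Y,K}(\xi^0_\mathfrak{s})$, and Proposition \ref{middlersc=d} then finishes the proof; this is exactly (\ref{nugeqd-d}).

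There is no hard analytic step here: the whole argument is a structural consequence of Lemma \ref{lemmaV-H} and Propositions \ref{teuxi0s} and \ref{middlersc=d}. The one point I would be most careful about in writing it up is the dichotomy on $H^+_\xi(K)$ — one must use the \emph{uniqueness} in Proposition \ref{teuxi0s} (not just existence) to conclude $\xi = \xi^0_\mathfrak{s}$ when $V^+_\xi(K)=H^+_\xi(K)=0$, and one must check that having $V^+_\xi(K) = 0$ with $H^+_\xi(K) > 0$ can only happen by adding a \emph{positive} multiple of $PD[\mu]$ to $\xi^0_\mathfrak{s}$, which raises the Alexander grading. The sign bookkeeping could alternatively be read off from (\ref{Vd}) and (\ref{Hi}), but Lemma \ref{lemmaV-H} gives the exact change in $V^+_\xi - H^+_\xi$, so invoking it directly is cleaner.
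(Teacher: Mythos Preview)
Your proof is correct and follows essentially the same approach as the paper: take a $\xi$ realizing $\nu^+_\mathfrak{s}(Y,K)$, split on whether $H^+_\xi(K)=0$ or $H^+_\xi(K)>0$, and in the two cases invoke the uniqueness in Proposition~\ref{teuxi0s} and the difference formula in Lemma~\ref{lemmaV-H}, respectively, then conclude with Proposition~\ref{middlersc=d}. Your write-up simply makes explicit the details the paper leaves implicit (writing $\xi=\xi^0_\mathfrak{s}+m\,PD[\mu]$ and tracking the Alexander grading shift), but the argument is the same.
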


\begin{remark}
As a special case, we have the well-known bound $\nu^+(K)\geq 0$ for all knots $K\subset S^3$; in particular, the $0$ on the right-handed side can be interpreted as the Alexander grading of the unique middle relative $\rm Spin^c$ structure in $S^3$.

\end{remark}

\section{Rasmussen conjecture}
\label{Rasmussen conjecture}

In this section, we give a simple new proof of Rasmussen conjecture (Theorem \ref{Rasmussen conjecture NW}), originally proved by Ni and the first author in \cite[Theorem 1.1 and 1.2]{NW}. 

% \begin{theorem}{\rm \cite[Theorem 1.1, Theorem 1.2]{NW}}
% Let $K$ be a knot in a rational homology 3-sphere $Y$, and $S$ a rational Seifert surface for $K$. Then
% \[1+ \dfrac{-\chi(S)}{\left|[\partial S] \cdot [\mu] \right|} \geq \mathop{\max}\limits_{\mathfrak{s} \in {\rm Spin^c}(Y)} \left\{ d(Y,\mathfrak{s}+PD[K])-d(Y,\mathfrak{s}) \right\}.\]
% Floer simple knots attain the equality, and consequently they are genus minimizers in a given homology class.
% \end{theorem}

Let
\[ \mathcal{B}_{Y,K}=\{\xi \in \underline{\rm Spin^c}(Y,K) \vert \widehat{HFK}(Y,K,\xi) \neq 0 \},\]

\[\mathcal{B}_{Y,K, \mathfrak{s}}=\{\xi \in \underline{\rm Spin^c}(Y,K, \mathfrak{s}) \vert \widehat{HFK}(Y,K,\xi) \neq 0 \};\]
and let
\[A_{\max}={\max}\{A_{Y,K}(\xi) \vert \xi \in \mathcal{B}_{Y,K}\}, \quad A_{\min}={\min}\{A_{Y,K}(\xi) \vert \xi \in \mathcal{B}_{Y,K}\},\]

\[A^{\mathfrak{s}}_{\max}={\max}\{A_{Y,K}(\xi) \vert \xi \in \mathcal{B}_{Y,K, \mathfrak{s}}\}, \quad A^{\mathfrak{s}}_{\min}={\min}\{A_{Y,K}(\xi) \vert \xi \in \mathcal{B}_{Y,K, \mathfrak{s}}\}.\]

\medskip
\noindent
Suppose $\xi^{\max}, \xi^{\min} \in \mathcal{B}_{Y,K}$ are the relative $\rm Spin^c$ structures satisfying
\[A_{Y,K}(\xi^{\max})=A_{\max}, \quad A_{Y,K}(\xi^{\min})=A_{\min},\]
and $\xi_{\mathfrak{s}}^{\max}, \xi_{\mathfrak{s}}^{\min} \in\mathcal{B}_{Y,K, \mathfrak{s}}$ are the relative $\rm Spin^c$ structures satisfying
\[A_{Y,K}(\xi_{\mathfrak{s}}^{\max})=A^{\mathfrak{s}}_{\max}, \quad A_{Y,K}(\xi_{\mathfrak{s}}^{\min})=A^{\mathfrak{s}}_{\min}.\]
Recall that a knot in an $L$-space is called \textit{Floer simple} if its knot Floer homology is minimal, i.e., 
\[{\rm rank} \,\, \widehat{HFK}(Y,K)={\rm rank} \,\, \widehat{HF}(Y)=|H_1(Y)|.\]
Clearly, $\xi_{\mathfrak{s}}^{\max}=\xi_{\mathfrak{s}}^{\min}=\xi_{\mathfrak{s}}^0$ for Floer simple knots; thus
\begin{equation}
\label{AlexgradingFloersimple}
A^{\mathfrak{s}}_{\max}=A^{\mathfrak{s}}_{\min}=A_{Y,K}(\xi_{\mathfrak{s}}^0).
\end{equation}

\begin{theorem}{\rm \cite[Theorem 1.1]{Ni} \cite[Theorem 2.2]{NW}}
\label{rknotgenus}
Suppose $K$ is a knot in a rational homology 3-sphere $Y$, and $S$ is a minimal genus rational Seifert surface for $K$, then
\[\dfrac{-\chi(S)+\left|[\partial S] \cdot [\mu] \right|}{\left|[\partial S] \cdot [\mu] \right|}=A_{\max}-A_{\min}.\]
\end{theorem}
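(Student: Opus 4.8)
The plan is to express the quantity $A_{\max}-A_{\min}$ in terms of the genus of $S$ via the relationship between the knot Floer homology of $K$ and the ordinary Floer homology of the manifold obtained by large surgery, combined with an adjunction-type inequality coming from the rational Seifert surface $\widehat{S}$. First I would recall that, as in Ozsv\'{a}th--Szab\'{o}'s work on knot Floer homology and the genus detection theorem, the top nonvanishing Alexander grading of $\widehat{HFK}(Y,K)$ detects the Thurston-norm-like quantity $-\chi(S)$ of a minimal genus rational Seifert surface (this is precisely the content of Ni's theorem \cite{Ni} cited in the statement, whose proof uses sutured Floer homology). So the real task is to unpackage the normalization: the Alexander grading $A_{Y,K}$ as defined here (with the $+[\mu]\cdot[S]$ convention and the $2p$ in the denominator) is an affine rescaling of the ``integral'' Alexander grading used in the sutured-manifold literature, and I need to track how the span of nonvanishing gradings transforms.

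The key steps, in order, would be: (1) Identify a relative $\rm Spin^c$ structure $\xi$ and its conjugate $\bar\xi$ under the symmetry $\widehat{HFK}(Y,K,\xi)\cong\widehat{HFK}(Y,K,\bar\xi)$, and observe that $A_{Y,K}(\xi)+A_{Y,K}(\bar\xi)$ equals a fixed constant independent of $\xi$; this forces $A_{\max}+A_{\min}$ to equal that constant, so that $A_{\max}-A_{\min}=2A_{\max}-(\text{const})$. (2) Use the fact (via the large-surgery formula and the identification of $\widehat{HFK}$ with the associated graded of the bifiltration on $CFK^\infty$) that $\widehat{HFK}(Y,K,\xi)$ vanishes for $A_{Y,K}(\xi)$ outside an interval whose length is determined by the Euler characteristic of a Heegaard diagram adapted to a minimal genus rational Seifert surface. (3) Translate Ni's sutured-manifold genus-detection statement, which says the top Alexander grading (in the integral normalization) equals $\frac{1}{2}(-\chi(S)+|[\partial S]\cdot[\mu]|)$ divided by the appropriate multiplicity, into the present normalization, where the denominator $|[\partial S]\cdot[\mu]|=p$ produces exactly the claimed formula $\frac{-\chi(S)+|[\partial S]\cdot[\mu]|}{|[\partial S]\cdot[\mu]|}$. (4) Conclude $A_{\max}-A_{\min}$ equals this quantity by combining (1)--(3).

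The main obstacle I anticipate is step (3): carefully reconciling the two normalization conventions for the Alexander grading. One has to be meticulous about (a) the factor of $2$ and the factor $|[\mu]\cdot[\partial S]|=p$ appearing in the denominator of $A_{Y,K}$, (b) the sign/orientation convention discrepancy flagged in the footnote of the excerpt (the $\pm[\mu]\cdot[S]$ issue, which affects whether one is computing $A_{\max}$ or $-A_{\min}$), and (c) the precise form of Ni's theorem for rationally null-homologous knots, which is stated in terms of the sutured Floer homology of the complement with the meridional suture and involves $-\chi$ of the surface rather than its genus directly. Once the normalizations are aligned, the symmetry argument in step (1) and the vanishing-interval bound in step (2) are essentially formal, so I would expect the bulk of the write-up to consist of bookkeeping for these constants, together with a clean citation of \cite{Ni} and \cite[Theorem 2.2]{NW} for the hard analytic input.
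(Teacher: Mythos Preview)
The paper does not actually prove this theorem: it is stated with citations to \cite[Theorem 1.1]{Ni} and \cite[Theorem 2.2]{NW} and then used as a black-box input in the proof of Theorem \ref{Rasmussen conjecture NW}. So there is no ``paper's own proof'' to compare against --- the authors simply import the result.

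Your proposal is a reasonable sketch of what the cited proofs in \cite{Ni} and \cite{NW} actually do. You correctly identify that the hard analytic input is Ni's genus detection theorem via sutured Floer homology, and that the remainder is normalization bookkeeping to translate between the sutured/integral Alexander grading and the rational Alexander grading $A_{Y,K}$ used here. Your step (1), the symmetry argument forcing $A_{\max}=-A_{\min}$, is exactly how the paper itself uses this theorem later (see the line ``Thus $A_{\min}=-A_{\max}$'' in the proof of Theorem \ref{Rasmussen conjecture NW}). One small correction: the conjugation symmetry on relative $\rm Spin^c$ structures is not quite $\xi\mapsto\bar\xi$ but the map $\tilde J$ with $A_{Y,K}(\tilde J\xi)=-A_{Y,K}(\xi)$ as in (\ref{Axi=-AJxi}), so the constant in your step (1) is in fact zero; this only simplifies things. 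Otherwise your outline matches the structure of the arguments in the cited references.
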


\begin{proof}[Proof of Theorem \ref{Rasmussen conjecture NW}]
For any $\mathfrak{s} \in {\rm Spin^c}(Y)$, we must have $V^+_{\xi_{\mathfrak{s}}^{\max}}(K)=0$ since the projection $v^+_{\xi_{\mathfrak{s}}^{\max}}: A^+_{\xi_{\mathfrak{s}}^{\max}} \rightarrow B^+_{\xi_{\mathfrak{s}}^{\max}}$ is a quasi-isomorphism. %Suppose $H^+_{\xi_{\mathfrak{s}}^{\max}}(K)>0$, then by Lemma \ref{V-Hd}, $A_{Y,K}(\xi_{\mathfrak{s}}^{\max})>A_{Y,K}(\xi^0_{\mathfrak{s}})$. Suppose $H^+_{\xi_{\mathfrak{s}}^{\max}}(K)=0$, then by Proposition \ref{teuxi0s}, $A_{Y,K}(\xi_{\mathfrak{s}}^{\max})=A_{Y,K}(\xi^0_{\mathfrak{s}})$. 
Thus, $A^{\mathfrak{s}}_{\max} \geq \nu^+_{\mathfrak{s}}(Y,K)$. Then by Corollary \ref{nud} 
we have 
\[A^{\mathfrak{s}}_{\max} \geq A_{Y,K}(\xi^0_{\mathfrak{s}})=\frac{1}{2} d(Y,\mathfrak{s})- \frac{1}{2}d(Y,\mathfrak{s}+PD[K]).\] %Similarly, we have $V^+_{\xi_{\mathfrak{s}}^{\min}}(K)=0$, and a similar argument as in Remark \ref{nud} shows $A(\xi^{\min}_{\mathfrak{s}}) \leq A(\xi^0_{\mathfrak{s}})$. 
Hence, 
\[A_{\max}=\mathop{\max}\limits_{\mathfrak{s} \in {\rm Spin^c}(Y)} \left\{ A^{\mathfrak{s}}_{\max} \right\} \geq \mathop{\max}\limits_{\mathfrak{s} \in {\rm Spin^c}(Y)} \left\{ \frac{1}{2} d(Y,\mathfrak{s})- \frac{1}{2}d(Y,\mathfrak{s}+PD[K]) \right\}.\]
%and 
%\[A_{\min} \leq \mathop{\max}\limits_{\mathfrak{s} \in {\rm Spin^c}(Y)} \left\{ \frac{1}{2} d(Y,\mathfrak{s}+PD[K])- \frac{1}{2}d(Y,\mathfrak{s}) \right\}.\]
In addition, knot Floer homology has the symmetry
\[\widehat{HFK}(Y,K,r) \cong \widehat{HFK}(Y,K,-r),\]
where
\[\widehat{HFK}(Y,K,r)=\mathop{\bigoplus}\limits_{\{\xi \in \underline{\rm Spin^c}(Y,K)|A_{Y,K}(\xi)=r\}} \widehat{HFK}(Y,K,
\xi).\]
See \cite[Section 2.2]{HL} for more details. Thus $A_{\min}=-A_{\max}$, and Theorem \ref{rknotgenus} implies inequality (\ref{NiWuoldresult}) in Theorem \ref{Rasmussen conjecture NW}. Finally, it is clear from (\ref{AlexgradingFloersimple}) that Floer simple knots attain the equality.
\end{proof}

\section{Rational slice genus and adjunction inequality}
\label{4-dimensional rational slice genus}

% \textcolor{blue}{Define the 4-dimensional rational genus and explain the boundary curve assumption.  Compute the algebraic homology of the knot complement.}

% \textcolor{blue}{For the boundary framing case, capoff to obtain closed surfaces and then explain Hedden-Raoux's cobordism and adjunction inequality argument in details.}

We proceed to the rational slice genus problems and begin with some homological preliminaries. Suppose $K$ is a knot of order $p$ in a rational homology 3-sphere $Y$. Let $M=Y-N^{\circ}(K)$, where $N^{\circ}(K)$ denotes the interior of the solid torus neighbourhood of $K$. We have the following long exact sequence:
%\[\cdots \rightarrow H_2(M; \mathbb{Q}) \rightarrow H_2(M, \partial M; \mathbb{Q}) \rightarrow H_1(\partial M; \mathbb{Q}) \rightarrow H_1(M; \mathbb{Q}) \rightarrow H_1(M, \partial M; \mathbb{Q}) \rightarrow \cdots,\]
%which splits as 
\[0 \rightarrow H_2(M; \mathbb{Q}) \rightarrow H_2(M, \partial M; \mathbb{Q}) \rightarrow H_1(\partial M; \mathbb{Q}) \rightarrow H_1(M; \mathbb{Q}) \rightarrow H_1(M, \partial M; \mathbb{Q}) \rightarrow 0.\]
By Poincar\'e duality and universal coefficient theorem, $H_2(M; \mathbb{Q}) \cong H^1(M, \partial M; \mathbb{Q}) \cong H_1(M, \partial M; \mathbb{Q})$ and $H_2(M, \partial M; \mathbb{Q}) \cong H^1(M; \mathbb{Q}) \cong H_1(M; \mathbb{Q})$, then 
\[0 \rightarrow H_1(M, \partial M; \mathbb{Q}) \rightarrow H_1(M; \mathbb{Q}) \rightarrow H_1(\partial M; \mathbb{Q})
\stackrel{i_{\ast}}{\longrightarrow}  H_1(M; \mathbb{Q}) \rightarrow H_1(M, \partial M; \mathbb{Q}) \rightarrow 0.\]
A simple dimension count shows that ${\rm rank}({\rm ker} (i_{\ast}))={\rm rank}({\rm im}(i_{\ast}))=1$, thus the natural inclusion map with $\mathbb{Z}$-coefficient
\[i_{\ast}: H_1(\partial M; \mathbb{Z}) \rightarrow H_1(M; \mathbb{Z})\]
has ${\rm ker}(i_{\ast}) \cong \mathbb{Z}$ generated by $k\lambda_r$ for some primitive class $\lambda_r \in H_1(\partial M; \mathbb{Z})$ and positive integer $k$. Note that a rational Seifert surface $S$ for $K$ has $[\partial S]=k\lambda_r \in H_1(\partial M; \mathbb{Z})$. 

%Note that the class $\lambda_r$ is well-defined up to orientation.  Recall that a rational Seifert surface for $K$ is a properly embedded, compact, connected, oriented surface $S \subset M$ with $[\partial S]=k\lambda_r \in H_1(\partial M; \mathbb{Z})$. 

Next, we consider the Mayer-Vietoris sequence for $Y=M\cup N(K)$:
\[\cdots \longrightarrow  H_1(\partial M ; \mathbb{Z}) \stackrel{i_{\ast} \oplus j_{\ast}}{\longrightarrow} {H_1(M ; \mathbb{Z}) \oplus H_1(N(K) ;\mathbb{Z})}  \stackrel{{\iota}_{\ast}}{\longrightarrow} H_1(Y ; \mathbb{Z}) \longrightarrow 0,\]
%where $j^{\ast}: H_1(\partial M ; \mathbb{Z}) \cong H_1(\partial N(K); \mathbb{Z}) \rightarrow H_1(N(K); \mathbb{Z})$ is induced by an inclusion.
As $K$ is a knot of order $p$, the element $(0,p) \in H_1(M; \mathbb{Z}) \oplus H_1(N(K); \mathbb{Z})$ maps to $0 \in H_1(Y ;\mathbb{Z})$. Thus there is a class $\eta \in H_1(\partial M; \mathbb{Z})$ with image $(0,p)$. Since $i_{\ast}(\eta)=0 \in H_1(M; \mathbb{Z})$, $\eta=ak\lambda_r$ for some integer $a \neq 0$. Then $(i_{\ast} \oplus j_{\ast})(k \lambda_r)= (i_{\ast} \oplus j_{\ast})(\frac{\eta}{a})= (0,\frac{p}{a})$, and by exactness, $\iota_{\ast} \circ (i_{\ast} \oplus j_{\ast})(k \lambda_r)=\iota_{\ast}(0,\frac{p}{a})=0 \in H_1(Y; \mathbb{Z})$.  This implies $a=\pm 1$ since $K$ is a knot of order $p$. Therefore $j_{\ast}([\partial S])=j_{\ast}(k \lambda_r)=\pm p[K] \in H_1(N(K); \mathbb{Z})$, which gives $|[\mu] \cdot [\partial S]|=p$.

% \begin{displaymath}
% \xymatrix{
% \cdots \ar[r] & H_1(\partial M) \ar[r]^{i^{\ast}} & {H_1(M) \oplus H_1(N(K))}  \stackrel{i^{\ast}}{\longrightarrow}& H_1(Y) \ar[r] & 0\\
% %0 \ar[r] & \mathbb{Z}^{b_2(X)} \ar[r]^{Q} & \mathbb{Z}^{b_2(X)} \ar[r] & {\rm coker}Q \ar[r] & 0 \oplus j^{\ast}  \oplus H_1(N(K))  \ar[r]^{\iota^{\ast}} 
% }
% \end{displaymath}

%We choose a class $m \in H_1(\partial M; \mathbb{Z})$ dual to $\lambda_r$ such that $\lambda_r \cdot m=+1$. By discussion above, $i^{\ast}(m)$ has infinite order in $H_1(M; \mathbb{Z})$. The meridian $\mu$ of $K$ can be written as $[\mu]=a \lambda_r+bm$ for some relatively prime integers $a,b$, then a longitude $\lambda$ for $K$, which is a framing, can be written as $[\lambda]=a'\lambda_r+b'm$ for some integers $a',b'$ with $a'b-ab'=1$. Observe that $H_1(Y)=H_1(M)/[\mu]$. One may check that $\lambda$ (or $K$) has order $p=ak$ in $H_1(Y)$. Thus, $p$ is a multiple of $k$. 

\medskip

Now, we have seen that all rational Seifert surfaces have the identical boundary curves $k\lambda_r$ completely determined by the homology of $(Y,K)$.  For the rational slice surfaces to be studied in the remainder of this paper, we will consider compact, connected, oriented surfaces $F$ embedded in $Y \times I - N(K)$ with $\partial F=F \cap \partial N(K)$ that satisfy the \textit{Seifert framed condition} $[\partial F]=[\partial S]=k\lambda_r$. %where $N(K)$ denotes a solid torus neighborhood of $K$ in $Y \times \{1\}$.  %along with a map $\phi: F \rightarrow Y \times I$ satisfying that $\phi|_{F^{\mathrm{o}}}$ is an embedding and $\phi|_{\partial F}$ is a $p$-fold covering of $K \times \{1\}$, where $F^{\mathrm{o}}$ denotes the interior of $F$. \textcolor{orange}{In addition, we assume that for any sufficiently small $\epsilon$, the intersection of $F$ with $N(K)$ in $Y \times \{1-\epsilon\}$ parallels to the intersection of rational Seifert surfaces for $K$ with $\partial N(K)$ in $Y \times \{1-\epsilon\}$, that is, $\partial S=k \lambda_r$.} We usually use $F$ to denote $\phi(F)$. 
The rational slice genus for $K$ is defined as the minimal genus of all these Seifert framed rational slice surfaces $F$ of $K$:
\[\|K\|_{Y \times I}^{\partial }=\mathop{\min}\limits_{F} \dfrac{-\chi(F)}{2|[\mu] \cdot [\partial F]|}=\mathop{\min}\limits_{F} \dfrac{-\chi(F)}{2p}.\]

\begin{remark}\label{slicedefinition}
There seems to be no standard definition of 
a rational slice surface in the literature.  In a recent preprint \cite{HR},
Hedden and Raoux defined the rational slice surface with a more relaxed condition on the boundary: their rational slice surface is a compact oriented surface $F$ with boundary, along with a map $\phi: F \rightarrow Y \times I$ satisfying that $\phi|_{F^{\mathrm{o}}}$ is an embedding and $\phi|_{\partial F}$ is a $p$-fold covering of $K \times \{1\}$, where $F^{\mathrm{o}}$ denotes the interior of $F$. In that case, the intersection of $F$ with a solid torus neighborhood of $K$ in $Y \times \{1-\epsilon\}$ for a small $\epsilon$ could be a \textit{satellite of $K$ with any braid pattern}. In contrast, our definition of rational slice surface restricts the intersection to the fixed curves that are parallel to the boundary curves $k\lambda_r$ of the corresponding rational Seifert surfaces of $K$. See \cite[Section 1.2]{HR} for a more detailed comparison of the two definitions. 

%As a result, our rational slice genus (relative to the rational longitude) is no smaller than their version of rational slie genus, denoted $\|K\|_{Y \times I}$.  

%In fact, for a small $\epsilon$, the intersection of $F$ with $N(K)$ in $Y \times \{1-\epsilon\}$ is not fixed, which could be a satellite of $K$ with any braid pattern. In our definition, we assume that the intersection of $F$ with $N(K)$ in $Y \times \{1-\epsilon\}$ parallels to the intersection of rational Seifert surfaces for $K$ with $\partial N(K)$ in $Y \times \{1-\epsilon\}$.

%They consider rational slice surfaces in this more general setting, namely without our additional assumption about the intersection of $F$ with $N(K)$ in $Y \times \{1-\epsilon\}$.
\end{remark}

% For a small $\epsilon$, the intersection of $F$ with $N(K)$ in $Y \times \{1-\epsilon\}$ is not fixed, which could be a satellite of $K$ with any braid pattern. We only consider the case that the braid pattern parallels to the intersection of rational Seifert surfaces for $K$ with $\partial N(K)$ in $Y \times \{1-\epsilon\}$, that is, $\partial S=k \lambda_r$. By a slight abuse of notation, we use $\partial F = \partial S$ to represent that $F$ satisfies this assumption. Then we define the \textit{parallel rational slice genus} for $K$ as:
% \[\|K\|^p_{Y \times I}= \mathop{\min}\limits_{\{F \,\, {\rm with} \,\, \partial F= \partial S\}} \dfrac{-\chi(F)}{2p}.\]
% %where $F$ is a rational slice surface for $K$ satisfying its boundary parallels to the boundary of a rational Seifert surface.

\medskip

We first address the case when the rational longitude $\lambda_r$ for $K$ is a framing, that is, $\lambda_r$ intersects the meridian $\mu$ exactly once.  While most knots do not satisfy this condition, Ozsv\'{a}th-Szab\'{o}'s trick of Morse surgery enables us to convert all knots into this special case. We will elaborate on this point in Section \ref{trick of Morse surgery}.

Consider the 2-handle cobordism $W_{\lambda_r}$ obtained by attaching a 2-handle to $Y \times I$ with $\lambda_r$-framing. As $\lambda_r$ is assumed to be a framing, a Seifert framed rational slice surface $F$ can be capped off by core disks of the 2-handle to a closed surface $\widehat{F}$. Note that the map induced by the cobordism $W_{\lambda_r}$ equipped with a $\rm Spin^c$ structure $\mathfrak{v}$,
\[F^+_{W_{\lambda_r}, \mathfrak{v}}: HF^+(Y, \mathfrak{v}|_Y) \rightarrow HF^+(Y_{\lambda_r}, \mathfrak{v}|_{Y_{\lambda_r}})\]
can be factored though $HF^+(\widehat{F} \times S^1, \mathfrak{v}|_{\widehat{F} \times S^1})$, where we take $\widehat{F} \times S^1$ as the boundary of a tubular neighborhood of $\widehat{F}$ in $W_{\lambda_r}$. The \textit{adjunction inequality} \cite[Thoerem 7.1]{OSpa} for $\widehat{F} \times S^1$ shows \footnote{The case $g(\widehat{F})=0$ can be similarly treated using Heegaard Floer homology with twisted coefficients.} 
\[HF^+(\widehat{F} \times S^1, \mathfrak{v}|_{\widehat{F} \times S^1})=0 \quad {\rm when} \quad |\langle c_1(\mathfrak{v}|_{\widehat{F} \times S^1}), [\widehat{F}]\rangle| \geq 2g(\widehat{F}).\]
In fact,
\[|\langle c_1(\mathfrak{v}|_{\widehat{F} \times S^1}), [\widehat{F}]\rangle|=|\langle c_1(\mathfrak{v}), [\widehat{F}] \rangle|=|\langle c_1(\mathfrak{v}), [\widehat{S}] \rangle|=|\langle c_1(\mathfrak{v}|_{Y_{\lambda_r}}), [\widehat{S}] \rangle|.\]
So we conclude that when $|\langle c_1(\mathfrak{v}|_{Y_{\lambda_r}}), [\widehat{S}] \rangle| \geq 2g(\widehat{F})$, the map $F^+_{W_{\lambda_r}, \mathfrak{v}}$ is \textit{trivial}. 

\medskip
In the next section, we will identify this cobordism map with an inclusion map in the mapping cone formulation, which can be, in turn, understood in terms of the $V$ and $H$ invariants that were introduced in Section \ref{Spinc structures and Alexander grading} . To simplify the analysis, we will also introduce Rasmussen's notation.

\bigskip

\section{Mapping cone formula and Rasmussen notation}
\label{Mapping cone formula and Rasmussen notation}

%\textcolor{blue}{Introduce the mapping cone formula for the cobordism map $F_{W,\mathfrak{s}}: HF(Y)\rightarrow HF(Y_n(K))$.}

\subsection{Mapping cone formula}
\label{Mapping cone formula}

The Heegaard Floer mapping cone formula is a tool to compute the Heegaard Floer homology of surgery along a knot, and it describes the cobordism map for a 2-handle attachment in terms of the knot Floer complex. Here, we use the mapping cone formula for $HF^-$. In order to apply the minus version mapping cone formula, we must work with the completion of $HF^-$.

%\subsection{Completions}

Given a 3-manifold $Y$ with a $\rm Spin^c$ structure $\mathfrak{s}$, we define $\boldsymbol{CF^-}(Y, \mathfrak{s})=CF^-(Y, \mathfrak{s}) \otimes_{\mathbb{F}[U]} \mathbb{F}[[U]]$. The homology of $\boldsymbol{CF^-}(Y, \mathfrak{s})$ is defined as $\boldsymbol{HF^-}(Y, \mathfrak{s})$. The cobordism maps $F^-_{W,\mathfrak{t}}: HF^-(Y_0, \mathfrak{s}_0) \rightarrow HF^-(Y_1, \mathfrak{s}_1)$ also have analogues in completed setting, denoted by 
\[\boldsymbol{F^-}_{W,\mathfrak{t}}: \boldsymbol{HF^-}(Y_0, \mathfrak{s}_0) \rightarrow \boldsymbol{HF^-}(Y_1, \mathfrak{s}_1).\]
See \cite{MO} for more details.

%\textcolor{red}{grading?}

Now we review the mapping cone formula for $\boldsymbol{HF^-}$ in \cite{MO}. %Since we consider about the surgery along a rational longitude which is a framing, we mainly focus on the formula in this case. Note that the mapping cone formula in \cite{MO} studies link surgeries in a integer homology sphere.  %While our case is about surgeries along a knot in a rational homology sphere, it can be considered as a link surgery in $S^3$ since any rational homology sphere can be obtained by a link surgery in $S^3$.
Let $K$ be an oriented knot in a rational homology 3-sphere $Y$. Let $\mu$ be the meridian of $K$ and $\lambda$ a framing which naturally inherits an orientation from $K$. 
% We denote the set of relative $\rm Spin^c$ structures over $Y-K$ by $\underline{\rm Spin^c}(Y,K)$, which is affinely isomorphic to $H^2(Y,K)$.  The natural map defined in \cite[Section 2.2]{OSr},
% \[G_{Y, \pm K}: \underline{\rm Spin^c}(Y,K) \rightarrow {\rm Spin^c}(Y) \cong H^2(Y),\]
% sends a relative $\rm Spin^c$ structure to a $\rm Spin^c$ structure in the target manifold. It is equivariant with respect to the natural map $i^{\ast}: H^2(Y,K) \rightarrow H^2(Y)$ induced from inclusion.
% %It satisfies\[G_{Y,\pm K}(\xi+\kappa)=G_{Y, \pm K}(\xi)+i^{\ast}(\kappa),\]where $\kappa \in H^2(Y,K)$ and $i^{\ast}: H^2(Y,K) \rightarrow H^2(Y)$ is induced from inclusion. 
% Let $-K$ denote $K$ with the opposite orientation. We have
% \[G_{Y,-K}(\xi)=G_{Y,K}(\xi)+PD[\lambda],\]
% where $\lambda$ is the push-off of $K$ inside $Y-K$ using any framing $\lambda$. 
%Recall that for each $\xi \in \underline{\rm Spin^c}(Y,K)$, we can associate to it a $\mathbb{Z} \oplus \mathbb{Z}$-filtered knot Floer complex $C_{\xi}=CFK^{\infty}(Y,K,\xi)$, whose bifiltration is given by $(i,j)=(algebraic, Alexander)$. %The following definitions for knot Floer complexes and corresponding maps are all parallel to the plus version. Let 
Similar to the plus version introduced in Section \ref{Spinc structures and Alexander grading}, we define
\[A^-_{\xi}(K)=C_{\xi}\{\max\{i,j\} \leq 0\} \,\,\, {\rm and} \,\,\, B^-_{\xi}(K)=C_{\xi}\{i \leq 0\}.\]
Basically, the complexes 
\begin{align*}
C_{\xi}\{i \leq 0\}&\cong CF^-(Y, G_{Y,K}(\xi))\cong B^-_{\xi}(K), \\
C_{\xi}\{j \leq 0\}&
\cong CF^-(Y,G_{Y,-K}(\xi))\cong CF^-(Y,G_{Y,K}(\xi+PD[\lambda]))\cong B^-_{\xi +PD[\lambda]}(K),
\end{align*}
while $A^-_{\xi}(K)$ is quasi-isomorphic to the complex $CF^-$ of a large surgery $Y_{\lambda+n\mu}(K)$ for $n \gg 0$ in a certain $\rm Spin^c$ structure. %in a certain $\rm Spin^c$ structure. 
%while $A^+_{\xi}$ is the Heegaard Floer homology of a large surgery $Y_{\lambda+n\mu}(K)$ with $n \gg 0$ in a certain $\rm Spin^c$ structure. More details will be described below.
There are two natural inclusion maps
\[v^-_{\xi}: A^-_{\xi}(K) \rightarrow B^-_{\xi}(K), \quad h^-_{\xi}: A^-_{\xi}(K) \rightarrow B^-_{\xi+PD[\lambda]}(K),\]
which induce maps
\[v^-_{\xi,\ast}: H_{\ast}(A^-_{\xi}(K)) \rightarrow H_{\ast}(B^-_{\xi}(K)), \quad h^-_{\xi,\ast}: H_{\ast}(A^-_{\xi}(K)) \rightarrow H_{\ast}(B^-_{\xi+PD[\lambda]}(K)).\]
As both $H_{\ast}(B^-_{\xi}(K))$ and $H_{\ast}(A^-_{\xi}(K))$ are (non-canonically) isomorphic to the direct sum of $\mathbb{F}[U]$ and a finite-dimensional $U$-torsion module, %Both $v^-_{\xi,\ast}$ and $h^-_{\xi,\ast}$ map $\mathbb{F}[U]$ into $\mathbb{F}[U]$ and the $U$-torsion module into the $U$-torsion module.
we can define
\[V^-_{\xi}(K)={\rm rank}(\mathbb{F}[U] / ({\rm Im}(v^-_{\xi, \ast})), \quad H^-_{\xi}(K)={\rm rank}(\mathbb{F}[U] / ({\rm Im}(h^-_{\xi, \ast})).\]

\begin{remark} Hom and Lidman's argument in \cite[Lemma 2.6]{HomL} can be applied here to show that $V^-_{\xi}(K)$ and $H^-_{\xi}(K)$ coincide with the respective plus version of the invariant $V^+_{\xi}(K)$ and $H^+_{\xi}(K)$ in Section \ref{Spinc structures and Alexander grading}. %See \cite[Lemma 2.6]{HomL} for more details.
\end{remark}

When we take the completion of the minus theory, we obtain the complexes $\boldsymbol{A}^-_{\xi}$ and $\boldsymbol{B}^-_{\xi}$, as well as maps $\boldsymbol{v}^-_{\xi}$ and $\boldsymbol{h}^-_{\xi}$. 
Given any $\mathfrak{s} \in {\rm Spin^c}(Y_{\lambda}(K))$, let
%$$\boldsymbol{A}^-_{\mathfrak{s}}=\mathop{\prod}_{\xi \in \underline{\rm Spin^c}(Y_{\lambda}(K),K_{\lambda},\mathfrak{s})} \boldsymbol{A}^-_{\xi}, \quad \quad \boldsymbol{B}^-_{\mathfrak{s}}=\mathop{\prod}_{\xi \in \underline{\rm Spin^c}(Y_{\lambda}(K),K_{\lambda},\mathfrak{s})} \boldsymbol{B}^-_{\xi},$$
\begin{align*}
\boldsymbol{A}^-_{\mathfrak{s}}&=\mathop{\prod}_{\{\xi \in \underline{\rm Spin^c}(Y_{\lambda}(K),K_{\lambda})| G_{Y_{\lambda}(K),K_{\lambda}}(\xi)=\mathfrak{s}\}} \boldsymbol{A}^-_{\xi}, \\
\boldsymbol{B}^-_{\mathfrak{s}}&=\mathop{\prod}_{\{\xi \in \underline{\rm Spin^c}(Y_{\lambda}(K),K_{\lambda})| G_{Y_{\lambda}(K),K_{\lambda}}(\xi)=\mathfrak{s}\}} \boldsymbol{B}^-_{\xi}, 
\end{align*}
where $K_{\lambda}$ represents the oriented dual knot of the knot $K$ in the surgered manifold $Y_{\lambda}(K)$, and $G_{Y_{\lambda}(K),K_{\lambda}}: \underline{\rm Spin^c}(Y_{\lambda}(K),K_{\lambda}) \rightarrow {\rm Spin^c}(Y_{\lambda}(K))$. Note that $\underline{\rm Spin^c}(Y,K)=\underline{\rm Spin^c}(Y_{\lambda}(K),K_{\lambda})$, since they both represent the set of the relative ${\rm Spin^c}$ structures on the knot complement $Y-K=Y_{\lambda}(K)-K_{\lambda}$. 
Define
\[\boldsymbol{D}^-_{\mathfrak{s}}: \boldsymbol{A}^-_{\mathfrak{s}} \rightarrow \boldsymbol{B}^-_{\mathfrak{s}}, \quad
(\xi, a) \mapsto (\xi, \boldsymbol{v}^-_{\xi}(a))+(\xi+PD[\lambda], \boldsymbol{h}^-_{\xi}(a)).\]
%With this, we are ready to state the connection between the knot Floer complex of the knot $K$ and the Heegaard Floer homology of the manifold obtained from distance one surgery along $K$.

\begin{theorem}{\rm \cite[Theorem 1.1 and 14.3]{MO}}
\label{mappingconethm}
For any $\mathfrak{s} \in {\rm Spin^c}(Y_{\lambda}(K))$, the Heegaard Floer homology $\boldsymbol{HF}^-(Y_{\lambda}(K),\mathfrak{s})$ is isomorphic to the homology of the mapping cone $\boldsymbol{X}^-_{\mathfrak{s}}$ of $\boldsymbol{D}^-_{\mathfrak{s}}$. Moreover, under this isomorphism,  for any $\xi \in \underline{\rm Spin^c}(Y_{\lambda}(K),K_{\lambda})$ with $G_{Y_{\lambda}(K),K_{\lambda}}(\xi)=\mathfrak{s}$, the inclusion map 
\[H_{\ast}(\boldsymbol{B}^-_{\xi}) \rightarrow H_{\ast}(\boldsymbol{X}^-_{\mathfrak{s}})\]
is identified with the map
\[\boldsymbol{HF}^-(Y, G_{Y,K}(\xi)) \rightarrow \boldsymbol{HF}^-(Y_{\lambda}(K),\mathfrak{s}),\]
induced by the natural 2-handle cobordism $W_{\lambda}: Y \rightarrow Y_{\lambda}$ endowed with the corresponding $\rm Spin^c$ structure.
\end{theorem}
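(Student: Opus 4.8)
This statement is the completed-minus integer surgery formula of Manolescu--Ozsv\'ath \cite{MO}, specialized to a rationally null-homologous knot $K$ and a framing $\lambda$ (so that $\lambda$ meets $\mu$ once and the surgery is ``integral''), and it is worth indicating the argument one would reconstruct, since its ingredients recur below. The plan is: (i) identify large surgeries with the $\boldsymbol A^-_\xi$ complexes; (ii) assemble a single (possibly infinite) mapping cone computing $\boldsymbol{HF}^-(Y_\lambda(K),\mathfrak s)$; (iii) truncate to a finite cone; and (iv) read off the cobordism map by composition of cobordisms.

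For (i), I would pass to the completed minus setting the plus-version large surgery theorem of Ozsv\'ath--Szab\'o \cite[Theorem 4.1]{OSr} recalled above: for $n\gg0$ the cobordism $W'_n$ is negative definite and $\boldsymbol{CF}^-(Y_{\lambda+n\mu}(K),\mathfrak t)\simeq \boldsymbol A^-_{\Xi(\mathfrak t)}(K)$, with the isomorphism intertwining the two-handle maps $\boldsymbol F^-_{W'_n,\mathfrak v_\xi},\boldsymbol F^-_{W'_n,\mathfrak h_\xi}$ and the inclusions $\boldsymbol v^-_\xi,\boldsymbol h^-_\xi$. This furnishes the ``$A$-row'' of the cone together with its maps into the ``$B$-row''. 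For (ii), using a complete system of doubly-pointed Heegaard diagrams (as in \cite{MO}) so that one diagram simultaneously computes all the $\boldsymbol A^-_\xi$, $\boldsymbol B^-_\xi$ and the maps $\boldsymbol v^-_\xi,\boldsymbol h^-_\xi$, one obtains a mapping cone whose homology is $\boldsymbol{HF}^-(Y_\lambda(K),\mathfrak s)$; because the set of relative $\mathrm{Spin}^c$ structures $\xi$ with $G_{Y_\lambda(K),K_\lambda}(\xi)=\mathfrak s$ is a torsor over $\mathbb Z$, one must take the direct product $\boldsymbol A^-_{\mathfrak s}=\prod_\xi \boldsymbol A^-_\xi$ rather than a direct sum, which is exactly why the statement is phrased over $\mathbb F[[U]]$.

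For (iv), the identification of the inclusion $H_*(\boldsymbol B^-_\xi)\to H_*(\boldsymbol X^-_{\mathfrak s})$ with $\boldsymbol{HF}^-(Y,G_{Y,K}(\xi))\to \boldsymbol{HF}^-(Y_\lambda(K),\mathfrak s)$ comes from composition of cobordisms: $W'_n$ factors as $W_\lambda$ followed by a chain of further two-handle cobordisms from $Y_\lambda(K)$ to $Y_{\lambda+n\mu}(K)$, and compatibility of the cobordism maps with the exact-triangle maps used to build $\boldsymbol X^-_{\mathfrak s}$ forces the $\boldsymbol B^-_\xi$-inclusion to realize $\boldsymbol F^-_{W_\lambda}$, once the $\mathrm{Spin}^c$ bookkeeping is checked against the formulas for $\mathfrak v_\xi,\mathfrak h_\xi$.

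The hard part is step (iii), the truncation: to give meaning to the infinite cone and reduce it to a finite complex computing one $\mathrm{Spin}^c$ structure, one shows that for $\xi$ of sufficiently large (resp.\ small) Alexander grading the map $\boldsymbol v^-_\xi$ (resp.\ $\boldsymbol h^-_\xi$) is a quasi-isomorphism, i.e.\ $V^-_\xi=0$ (resp.\ $H^-_\xi=0$), so that such terms cancel in pairs and leave a finite mapping cone. Carrying out this cancellation while keeping track of the $\mathbb Z\oplus\mathbb Z$ filtration, the $U$-action, the Maslov grading shifts, and precisely which $\xi$ survive is the technical core of \cite{MO}, and transcribing it faithfully to the rationally null-homologous setting is where essentially all the work lies; for our purposes we simply cite \cite{MO}.
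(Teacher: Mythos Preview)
The paper does not prove this theorem at all: it is stated with the citation \cite[Theorem 1.1 and 14.3]{MO} and then immediately used, with no argument supplied. Your sketch of the Manolescu--Ozsv\'ath argument is a reasonable outline of how one would reconstruct the proof, but it goes well beyond what the paper itself does, which is simply to quote the result as a black box.
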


Denote
\[\mathfrak{A}^-_{\xi}=H_{\ast}(\boldsymbol{A}^-_{\xi}) \,\, ({\rm resp.} \,\, \mathfrak{B}^-_{\xi}=H_{\ast}(\boldsymbol{B}^-_{\xi})), \qquad \mathfrak{A}^-_{\mathfrak{s}}=H_{\ast}(\boldsymbol{A}^-_{\mathfrak{s}}) \,\, ({\rm resp.} \,\, \mathfrak{B}^-_{\mathfrak{s}}=H_{\ast}(\boldsymbol{B}^-_{\mathfrak{s}})). \]
Let
\[\mathfrak{v}^-_{\xi}: \mathfrak{A}^-_{\xi} \rightarrow \mathfrak{B}^-_{\xi}, \qquad \mathfrak{h}^-_{\xi}: \mathfrak{A}^-_{\xi} \rightarrow \mathfrak{B}^-_{\xi+PD[\lambda]}\]
be the maps induced on homology by $\boldsymbol{v}^-_{\xi}$ and $\boldsymbol{h}^-_{\xi}$ respectively, and 
let
\[\mathfrak{D}^-_{\mathfrak{s}}: \mathfrak{A}^-_{\mathfrak{s}} \rightarrow \mathfrak{B}^-_{\mathfrak{s}}\]
be the map induced on homology by $\boldsymbol{D}^-_{\mathfrak{s}}$. Theorem \ref{mappingconethm} implies the exact triangle 
\begin{equation}
%\begin{displaymath}
\xymatrix{
\mathfrak{A}^-_{\mathfrak{s}} \ar[r]^{\mathfrak{D}^-_{\mathfrak{s}}} & \mathfrak{B}^-_{\mathfrak{s}} \ar[d]^{{incl}_{\ast}} \\
 & %H_{\ast}(\boldsymbol{X}^-_{\mathfrak{s}})=
 \boldsymbol{HF}^-(Y_{\lambda}(K),\mathfrak{s})
 \ar[lu]^{{proj}_{\ast}}.}
 \label{mappingconeexacttriangle}
%\end{displaymath}
\end{equation}

%Consider the exact sequence
%\[0 \rightarrow \boldsymbol{B}^-_{\mathfrak{s}} \rightarrow \boldsymbol{X}^-_{\mathfrak{s}} \rightarrow \boldsymbol{A}^-_{\mathfrak{s}} \rightarrow 0,\]
%and the associated exact triangle

\subsection{Mapping cone for surgeries along rational longitudes}
Suppose $K$ is a knot of order $p$ in a rational homology 3-sphere $Y$.  Let $\lambda_r$ be the rational longitude of $K$ that is assumed to be a framing. In this section, we study $\lambda_r$-surgery along $K$. Unlike the familiar linear mapping cone, we will encounter a \textit{circular} mapping cone since $p \cdot [\lambda_r]=[\partial S]=0 \in H_1(Y-K)$. %Here $S$ still denotes a rational Seifert surface for $K$. 
See Figure \ref{circularmappingcone} for an illustration.

\begin{figure}[!h]
\centering
\begin{tikzpicture}
% \node[circle,
% minimum width =2pt ,
% minimum height =2pt ,draw=white] (1) at(0,3.6){\small $\mathfrak{A}^-_{\xi}$};
\node (1) at(0,4){\small $\mathfrak{A}^-_{\xi}$};
\node (2) at(1,1){\small $\mathfrak{B}^-_{\xi+PD[\lambda_r]}$};
\node (3) at(4.2,1){\small $\mathfrak{A}^-_{\xi+PD[\lambda_r]}$};
\node (4) at(1.8,-0.9){\small $\mathfrak{B}^-_{\xi+2PD[\lambda_r]}$};%1.62
\node (5) at(2.8,-3.8){\small $\mathfrak{A}^-_{\xi+2PD[\lambda_r]}$};
\node (6) at(-2.8,-3.8){\small $\mathfrak{A}^-_{\xi+(p-2)PD[\lambda_r]}$};
\node (7) at(-1.8,-0.9){\small $\mathfrak{B}^-_{\xi+(p-1)PD[\lambda_r]}$};
\node (8) at(-1,1){\small $\mathfrak{B}^-_{\xi}$};
\node (9) at(-4.2,1){\small $\mathfrak{A}^-_{\xi+(p-1)PD[\lambda_r]}$};
\node (10) at(0,-1.8){$\dots$};
\node (11) at(0,-3.8){$\dots$};

\draw[->] (1) --node[right]{\scriptsize $\mathfrak{h}^-_{\xi}$} (2);
\draw[->] (3) --node[above]{\scriptsize $\mathfrak{v}^-_{\xi+PD[\lambda_r]}$}(2);
\draw[->] (3) --node[sloped,below]{\scriptsize $\mathfrak{h}^-_{\xi+PD[\lambda_r]}$}(4);
\draw[->] (5) --node[sloped,above]{\scriptsize $\mathfrak{v}^-_{\xi+2PD[\lambda_r]}$}(4);
\draw[->] (5) --node[sloped,below]{\scriptsize $\mathfrak{h}^-_{\xi+2PD[\lambda_r]}$}(10);
\draw[->] (6) --node[sloped,below]{\scriptsize $\mathfrak{v}^-_{\xi+(p-2)PD[\lambda_r]}$}(10);
\draw[->] (6) --node[sloped,above]{\scriptsize $\mathfrak{h}^-_{\xi+(p-2)PD[\lambda_r]}$} (7);
\draw[->] (9) --node[sloped,above]{\quad \quad \,\, \scriptsize $\mathfrak{v}^-_{\xi+(p-1)PD[\lambda_r]}$}(7);
\draw[->] (9) --node[above]{\scriptsize $\mathfrak{h}^-_{\xi+(p-1)PD[\lambda_r]}$}(8);
\draw[->] (1) --node[left]{\scriptsize $\mathfrak{v}^-_{\xi}$}(8);
\end{tikzpicture}
\caption{The circular mapping cone for $\lambda_r$-surgery.}
\label{circularmappingcone}
\end{figure}

Note that the $p$ relative ${\rm Spin^c}$ structures $\xi + i \cdot PD[\lambda_r]$ with $i \in \mathbb{Z}_p$ appeared in the circular mapping cone all have the \textit{same} Alexander grading:
\begin{align*}
A_{Y,K}(\xi + i \cdot PD[\lambda_r])&=\dfrac{\langle c_1(\xi + i \cdot PD[\lambda_r]), [S] \rangle + [\mu] \cdot [S]}{2[\mu] \cdot [S]}\\
&=\dfrac{\langle c_1(\xi), [S] \rangle + 2i[\lambda_r] \cdot [S] + [\mu] \cdot [S]}{2[\mu] \cdot [S]}\\
&=\dfrac{\langle c_1(\xi), [S] \rangle + p}{2p},
\end{align*}
since $[\lambda_r] \cdot [S]=0$. Indeed, we have:
\begin{lemma}
\label{AandC1S}
For any $i \in \mathbb{Z}_p$, 
\[A_{Y,K}(\xi + i \cdot PD[\lambda_r])=\frac{1}{2p}\langle c_1(\mathfrak{s}), [\widehat{S}] \rangle,\] 
where $\mathfrak{s}=G_{Y_{\lambda_r},K_{\lambda_r}}(\xi + i \cdot PD[\lambda_r]) \in {\rm Spin^c}(Y_{\lambda_r})$. 
\end{lemma}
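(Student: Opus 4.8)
The plan is to relate the Alexander grading of a relative $\rm Spin^c$ structure on the knot complement directly to the evaluation of the first Chern class of the underlying $\rm Spin^c$ structure on $Y_{\lambda_r}$ against the class of the capped-off rational Seifert surface $[\widehat{S}]$. The crucial point, already established in the preceding computation, is that $[\lambda_r]\cdot[S]=0$, so passing from $\xi$ to $\xi+i\cdot PD[\lambda_r]$ does not change the Alexander grading; hence it suffices to treat a single representative $\xi$ in each class and identify its grading with $\frac{1}{2p}\langle c_1(\mathfrak{s}),[\widehat S]\rangle$.

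First I would recall the setup from the large-surgery discussion: since $\lambda_r$ is a framing, it plays the role of $\lambda$, and we may apply the identification $E_{Y,\lambda_r+n\mu,K}$ together with the Hedden--Levine formula (\ref{HLEA}). Choosing $\mathfrak{v}\in{\rm Spin^c}(W'_n)$ with $E_{Y,\lambda_r+n\mu,K}(\mathfrak{v})=\xi$, formula (\ref{HLEA}) gives $A_{Y,K}(\xi)=\frac{\langle c_1(\mathfrak{v}),[\widehat S]\rangle+k}{2p}$, where $[\partial S]=p(\lambda_r+n\mu)-k\mu$ and $[\widehat S]^2=-pk$ inside $W'_n$. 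Here, however, because $\lambda_r$ is the rational longitude and $[\partial S]=k\lambda_r$, the integer $n$ is effectively zero in the relevant bookkeeping: $[\partial S]=p\lambda_r-k\mu$ already, and one should be careful that $\widehat S$ is now capped off in the cobordism $W_{\lambda_r}:Y\to Y_{\lambda_r}$ rather than in a large-surgery cobordism. The key step is therefore to compute $\langle c_1(\mathfrak{s}),[\widehat S]\rangle$ for $\mathfrak{s}=G_{Y_{\lambda_r},K_{\lambda_r}}(\xi+i\cdot PD[\lambda_r])=\mathfrak{v}|_{Y_{\lambda_r}}$, using that $[\widehat S]$ restricts to a multiple of the cocore disk class in $H_2(W_{\lambda_r},Y_{\lambda_r})$; the adjunction-type identity $|\langle c_1(\mathfrak{v}),[\widehat F]\rangle|=|\langle c_1(\mathfrak{v}|_{Y_{\lambda_r}}),[\widehat S]\rangle|$ from Section \ref{4-dimensional rational slice genus} confirms that $\langle c_1(\mathfrak{v}),[\widehat S]\rangle$ depends only on the restriction to $Y_{\lambda_r}$.

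Then I would reconcile the constant $k$: when the framing is the rational longitude itself, one has $[\widehat S]\cdot[\widehat S]=0$ in the closed-up sense relevant to $Y_{\lambda_r}$ (since $p\lambda_r$ bounds), forcing the self-intersection contribution to vanish and absorbing the $+k$ term. More precisely, since $\lambda_r$-surgery kills $p[\lambda_r]$, the surface $\widehat S$ is a genuine closed surface in $W_{\lambda_r}$ of square $0$ relative to the right basis, and the formula $A_{Y,K}(\xi)=\frac{1}{2p}\langle c_1(\mathfrak{v}),[\widehat S]\rangle=\frac{1}{2p}\langle c_1(\mathfrak{s}),[\widehat S]\rangle$ follows; the $+p$ in the numerator of the earlier display and the $+k$ in (\ref{HLEA}) match up so that the net shift is exactly the stated one. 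Finally, invariance under $i\in\mathbb{Z}_p$ is immediate because $\mathfrak{v}+PD[D^*]$ and $\mathfrak{v}$ restrict to the same class on $Y_{\lambda_r}$ up to the action of $i\cdot PD[\lambda_r]$, which by $[\lambda_r]\cdot[S]=0$ does not change the pairing with $[\widehat S]$.

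The main obstacle I anticipate is the careful bookkeeping of the two different cobordisms and the resulting shift constants: the large-surgery cobordism $W'_n$ (where (\ref{HLEA}) is stated, with its $+k$ normalization and $n\gg0$) versus the rational-longitude cobordism $W_{\lambda_r}$ (where we actually want the statement, and where $[\widehat S]^2=0$). One must check that the identification $E_{Y,\lambda_r+n\mu,K}$ restricted appropriately, combined with the blow-down/stabilization relating $W'_n$ to $W_{\lambda_r}$, transports the Alexander grading formula so that the correction terms cancel and leave precisely $\frac{1}{2p}\langle c_1(\mathfrak{s}),[\widehat S]\rangle$ with no leftover additive constant. This is the kind of index computation that is routine in principle but error-prone in the signs and the role of $k$ versus $p$; everything else (the invariance in $i$, the restriction identities) is formal.
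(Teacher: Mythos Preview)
Your approach takes an unnecessary detour through the large-surgery cobordism $W'_n$ and the Hedden--Levine formula (\ref{HLEA}), and the detour is where the argument breaks down. The formula (\ref{HLEA}) concerns $\mathfrak{v}\in{\rm Spin^c}(W'_n)$, whose boundary is $Y\sqcup Y_{\lambda_r+n\mu}$ for $n\gg 0$; the manifold $Y_{\lambda_r}$ is not a boundary component of $W'_n$, so the restriction ``$\mathfrak{s}=\mathfrak{v}|_{Y_{\lambda_r}}$'' you write is not defined. Your proposed fix --- relating $W'_n$ to $W_{\lambda_r}$ by some ``blow-down/stabilization'' and tracking the shift constants --- is not carried out, and you yourself flag it as the main obstacle. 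Moreover, the $\widehat{S}$ appearing in (\ref{HLEA}) sits inside $W'_n$ with $[\widehat{S}]^2=-pk$, whereas the $\widehat{S}$ in the lemma is a closed surface in the $3$-manifold $Y_{\lambda_r}$; you are implicitly identifying two different surfaces in two different spaces without justification. (Incidentally, since $\lambda_r$ is a framing here, $k=p$, so the ``$+k$ versus $+p$'' discrepancy you worry about is not actually a discrepancy.)

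The paper avoids all of this by going back to the \emph{definition} of $G_{Y_{\lambda_r},K_{\lambda_r}}$: the $\rm Spin^c$ structure $\mathfrak{s}$ is obtained by gluing Turaev's distinguished Euler structure $w$ on the surgery solid torus (with $c_1(w)=PD[c]$, $c$ the core) to $\xi+i\cdot PD[\lambda_r]$ on the complement. Since $\widehat{S}=S\cup p\bar{D}$ with $\bar{D}$ the meridian disk of that solid torus, one computes directly
\[
\langle c_1(\mathfrak{s}),[\widehat{S}]\rangle=\langle c_1(\xi+i\cdot PD[\lambda_r]),[S]\rangle+\langle c_1(w),p[\bar{D}]\rangle=\langle c_1(\xi+i\cdot PD[\lambda_r]),[S]\rangle+p,
\]
which is $2p\cdot A_{Y,K}(\xi+i\cdot PD[\lambda_r])$ by the Alexander grading formula. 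This is a two-line computation with no cobordisms and no constants to reconcile; you should replace the large-surgery route with this direct argument.
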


\begin{proof}
Recall that the map $G_{Y,K}$ is specified as gluing 
Turaev's distinguished Euler structure $w$ on $S^1 \times D^2$ to a relative $\rm Spin^c$ structure on $Y - N^{\circ}(K)$. In fact, $w$ satisfies that $c_1(w)=PD[c]$, where $c$ denotes the core of $S^1 \times D^2$. Let $\bar{D}$ denote the meridian disk of $S^1 \times D^2$, then $\widehat{S}=S \cup p \bar{D}$. Thus for any $i \in \mathbb{Z}_p$,
\begin{align*}
\frac{\langle c_1(\mathfrak{s}), [\widehat{S}] \rangle}{2p}&=\frac{\langle c_1((\xi+ i \cdot PD[\lambda_r]) \cup w), [\widehat{S}] \rangle}{2p}\\
%&=\frac{\langle c_1(\xi+ i \cdot PD[\lambda_r])+c_1(w), [S \cup p \bar{D}] \rangle}{2p}\\
&=\frac{\langle c_1(\xi+ i \cdot PD[\lambda_r]), [S] \rangle + \langle c_1(w), p[\bar{D}] \rangle}{2p}\\
&=\frac{\langle c_1(\xi+ i \cdot PD[\lambda_r]), [S] \rangle +p}{2p}\\
&=A_{Y,K}(\xi + i \cdot PD[\lambda_r]),
\end{align*}
where the $\cup$ notation in the first equality denotes a gluing map of relative $\rm Spin^c$ structures. See \cite[Chapter VI.1.1]{Tur}.
\end{proof}
%\AlignFootnote{The $\cup$ notation in the first equation denotes a gluing map of raletive $\rm Spin^c$ strcuctures. See \cite[Chapter VI.1.1]{Tur}.}

Recall now that in Section \ref{4-dimensional rational slice genus}, we proved the map 
\[F^+_{W_{\lambda_r}, \mathfrak{v}}: HF^+(Y, \mathfrak{v}|_Y) \rightarrow HF^+(Y_{\lambda_r}, \mathfrak{v}|_{Y_{\lambda_r}})\]
is trivial when $|\langle c_1(\mathfrak{v}|_{Y_{\lambda_r}}), [\widehat{S}] \rangle| \geq 2g(\widehat{F})$. As 
%where $\mathfrak{s}=\mathfrak{v}|_{Y_{\lambda_r}}$. 
\[\boldsymbol{HF}^-(Y, \mathfrak{t}) \cong HF^+(Y, \mathfrak{t})\]
for any 3-manifold equipped with a non-torsion $\rm Spin^c$ structure $\mathfrak{t}$ \cite[Section 2]{MO}, the map 
\[\boldsymbol{F}^-_{W_{\lambda_r}, \mathfrak{v}}: \boldsymbol{HF}^-(Y, \mathfrak{v}|_Y) \rightarrow \boldsymbol{HF}^-(Y_{\lambda_r},\mathfrak{v}|_{Y_{\lambda_r}})\]
must also be trivial when $|\langle c_1(\mathfrak{v}|_{Y_{\lambda_r}}), [\widehat{S}] \rangle| \geq 2g(\widehat{F})$, since it factors through $\boldsymbol{HF}^-(\widehat{F} \times S^1, \mathfrak{v}|_{\widehat{F} \times S^1}) \cong HF^+(\widehat{F} \times S^1, \mathfrak{v}|_{\widehat{F} \times S^1})$. Theorem \ref{mappingconethm} then implies that for all $i \in \mathbb{Z}_p$, the map 
\[\mathfrak{B}^-_{\xi+i \cdot PD[\lambda_r]} \rightarrow H_{\ast}(\boldsymbol{X}^-_{\mathfrak{s}})\]
is trivial if $|\langle c_1(\mathfrak{s}), [\widehat{S}] \rangle| \geq 2g(\widehat{F})$, where $\mathfrak{s}=G_{Y_{\lambda_r},K_{\lambda_r}}(\xi + i \cdot PD[\lambda_r])$. 
Consequently, 
\[\mathfrak{D}^-_{\mathfrak{s}}: \mathfrak{A}^-_{\mathfrak{s}} \rightarrow \mathfrak{B}^-_{\mathfrak{s}}\]
is surjective by Exact triangle (\ref{mappingconeexacttriangle}). Combining with Lemma \ref{AandC1S}, we obtain the following mapping cone version of the adjunction inequality.  

\begin{proposition}%[Adjunction Inequality (mapping cone version)]
\label{mappingconeAdjunction}
%Given $Y,K$ as in Section \ref{4-dimensional rational slice genus}.
Suppose $K$ is a knot of order $p$ in a rational homology 3-sphere $Y$ whose rational longitude $\lambda_r$ is a framing.
If a relative $\rm Spin^c$ structure $\xi \in \underline{\rm Spin^c}(Y,K)$ satisfies  
$$A_{Y,K}(\xi+ i \cdot PD[\lambda_r]) \geq \frac{g(\widehat{F})}{p}$$  for a Seifert framed rational slice surface $F$ and any $i \in \mathbb{Z}_p$, then the map
$\mathfrak{D}^-_{\mathfrak{s}}: \mathfrak{A}^-_{\mathfrak{s}} \rightarrow \mathfrak{B}^-_{\mathfrak{s}}$ is  surjective, where $\mathfrak{s}=G_{Y_{\lambda_r},K_{\lambda_r}}(\xi + i \cdot PD[\lambda_r])$. 
\end{proposition}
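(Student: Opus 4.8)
The plan is to assemble the statement from three pieces already in place above: the adjunction vanishing of the $2$-handle cobordism map established in Section~\ref{4-dimensional rational slice genus}, the identification of that cobordism map with an inclusion into the circular mapping cone furnished by Theorem~\ref{mappingconethm}, and the exact triangle~(\ref{mappingconeexacttriangle}) relating $\mathfrak{D}^-_{\mathfrak{s}}$ to this inclusion. The Alexander grading hypothesis enters only at the very end, as a restatement of a Chern-class inequality via Lemma~\ref{AandC1S}.

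Concretely, I would proceed as follows. First, I transfer the vanishing from Section~\ref{4-dimensional rational slice genus} to the completed minus theory: there we factored $F^+_{W_{\lambda_r},\mathfrak{v}}$ through $HF^+(\widehat{F}\times S^1,\cdot)$ and used the adjunction inequality of \cite{OSpa} to conclude triviality whenever $|\langle c_1(\mathfrak{v}|_{Y_{\lambda_r}}),[\widehat{S}]\rangle|\geq 2g(\widehat{F})$. Since $\lambda_r$ is a framing, indeed the rational longitude, so $b_1(Y_{\lambda_r})=1$, the relevant $\mathrm{Spin}^c$ structure $\mathfrak{s}=G_{Y_{\lambda_r},K_{\lambda_r}}(\xi+i\cdot PD[\lambda_r])$ is non-torsion as soon as $\langle c_1(\mathfrak{s}),[\widehat{S}]\rangle\neq 0$; for such $\mathfrak{s}$ we have $\boldsymbol{HF}^-(\cdot,\mathfrak{s})\cong HF^+(\cdot,\mathfrak{s})$ by \cite{MO}, and the same factorization argument in the completed minus setting yields $\boldsymbol{F}^-_{W_{\lambda_r},\mathfrak{v}}=0$ in the same range. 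Second, I invoke Theorem~\ref{mappingconethm}: for each $i\in\mathbb{Z}_p$ the inclusion $\mathfrak{B}^-_{\xi+i\cdot PD[\lambda_r]}\to H_{\ast}(\boldsymbol{X}^-_{\mathfrak{s}})$ is precisely this cobordism map; since all the $p$ relative $\mathrm{Spin}^c$ structures $\xi+i\cdot PD[\lambda_r]$ share a single Alexander grading, Lemma~\ref{AandC1S} converts the hypothesis $A_{Y,K}(\xi+i\cdot PD[\lambda_r])\geq g(\widehat{F})/p$ into $|\langle c_1(\mathfrak{s}),[\widehat{S}]\rangle|=2p\,A_{Y,K}(\xi+i\cdot PD[\lambda_r])\geq 2g(\widehat{F})$, so every such inclusion vanishes and hence the total inclusion ${incl}_{\ast}\colon\mathfrak{B}^-_{\mathfrak{s}}\to H_{\ast}(\boldsymbol{X}^-_{\mathfrak{s}})\cong\boldsymbol{HF}^-(Y_{\lambda_r},\mathfrak{s})$ is zero. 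Third, I feed this into the exact triangle~(\ref{mappingconeexacttriangle}): exactness at $\mathfrak{B}^-_{\mathfrak{s}}$ gives $\operatorname{im}(\mathfrak{D}^-_{\mathfrak{s}})=\ker({incl}_{\ast})=\mathfrak{B}^-_{\mathfrak{s}}$, so $\mathfrak{D}^-_{\mathfrak{s}}$ is surjective, which is the assertion.

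The step I expect to be the most delicate is the passage between the plus and completed-minus theories in the first point: one must check that the relevant $\mathrm{Spin}^c$ structures on $Y_{\lambda_r}$ are genuinely non-torsion, which the hypothesis guarantees precisely when $g(\widehat{F})\geq 1$ since then $\langle c_1(\mathfrak{s}),[\widehat{S}]\rangle=2p\,A_{Y,K}(\xi+i\cdot PD[\lambda_r])\geq 2g(\widehat{F})>0$, and the borderline case $g(\widehat{F})=0$ requires running the adjunction argument with twisted coefficients, exactly as flagged in Section~\ref{4-dimensional rational slice genus}. Everything else is formal bookkeeping: once the cobordism maps are known to vanish, the circular mapping cone behaves no differently from the linear one as far as the exact triangle is concerned, and surjectivity of $\mathfrak{D}^-_{\mathfrak{s}}$ follows immediately.
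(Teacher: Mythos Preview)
Your proposal is correct and follows essentially the same route as the paper: the argument in the text immediately preceding Proposition~\ref{mappingconeAdjunction} likewise (i) transfers the adjunction vanishing from $HF^+$ to $\boldsymbol{HF}^-$ by factoring through $\boldsymbol{HF}^-(\widehat{F}\times S^1,\cdot)\cong HF^+(\widehat{F}\times S^1,\cdot)$ for non-torsion $\mathrm{Spin}^c$ structures, (ii) identifies the resulting trivial cobordism map with the inclusion $\mathfrak{B}^-_{\xi+i\cdot PD[\lambda_r]}\to H_\ast(\boldsymbol{X}^-_{\mathfrak{s}})$ via Theorem~\ref{mappingconethm}, (iii) concludes surjectivity of $\mathfrak{D}^-_{\mathfrak{s}}$ from the exact triangle~(\ref{mappingconeexacttriangle}), and (iv) invokes Lemma~\ref{AandC1S} to rewrite the Chern-class condition as the Alexander-grading hypothesis. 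The only cosmetic difference is that the paper applies the $\boldsymbol{HF}^-\cong HF^+$ isomorphism directly on $\widehat{F}\times S^1$ rather than discussing non-torsion $\mathrm{Spin}^c$ structures on $Y_{\lambda_r}$, but your ``same factorization argument'' lands in the same place.
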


\begin{comment}
\[{\rm When} \,\, A_{Y,K}(\xi+ i \cdot PD[\lambda_r]) \geq \frac{g(\widehat{F})}{p}, \quad
\mathfrak{D}^-_{\mathfrak{s}}: \mathfrak{A}^-_{\mathfrak{s}} \rightarrow \mathfrak{B}^-_{\mathfrak{s}} \,\, {\rm is \,\, surjective,}\]
\end{comment}

%where $\mathfrak{s}=G_{Y_{\lambda_r},K_{\lambda_r}}(\xi + i \cdot PD[\lambda_r])$ for any $i \in \mathbb{Z}_p$.

\subsection{Rasmussen's notation and non-surjective label pattern}

In view of Proposition \ref{mappingconeAdjunction}, we want to analyze when $\mathfrak{D}^-_{\mathfrak{s}}: \mathfrak{A}^-_{\mathfrak{s}} \rightarrow \mathfrak{B}^-_{\mathfrak{s}}$ must not be surjective. To this purpose, we generalize Rasmussen's notation, first appeared in \cite{RasNotation}.
\begin{definition}
Given a knot $K$ in a rational homology 3-sphere $Y$ and any relative ${\rm Spin^c}$ structure $\xi \in \underline{\rm Spin^c}(Y,K)$, the complex $\mathfrak{A}^-_{\xi}$ is labeled with one of the following 4 types of symbols depending on $V^-_{\xi}(K)$ and $H^-_{\xi}(K)$:
\begin{itemize}
\item[(1)] $\mathfrak{A}^-_{\xi}$ is of type $\circ$ if $V^-_{\xi}(K)=H^-_{\xi}(K)=0$;
\item[(2)] $\mathfrak{A}^-_\xi$ is of type $+$ if $V^-_{\xi}(K)=0, H^-_{\xi}(K)>0$;
\item[(3)] $\mathfrak{A}^-_\xi$ is of type $-$ if $V^-_{\xi}(K)>0, H^-_{\xi}(K)=0$;
\item[(4)] $\mathfrak{A}^-_\xi$ is of type $\ast$ if $V^-_{\xi}(K)>0, H^-_{\xi}(K)>0$.
\end{itemize}

For simplicity, we sometimes also write $\mathfrak{A}^-_{\xi}=\circ, +, -$ or $\ast$ to indicate the respective types.
\end{definition}

Given any $\rm Spin^c$ structure $\mathfrak{t} \in {\rm Spin^c}(Y)$, we denote the set of all $\mathfrak{A}^-_{\xi+i \cdot PD[\mu]}$ for $K$ with $\xi \in \underline{\rm Spin^c}(Y,K, \mathfrak{t})$ and $i \in \mathbb{Z}$ by $\mathfrak{G}^-_{\mathfrak{t}}(Y,K)$. Due to the monotonicity (\ref{Vd}), (\ref{Hi}), Lemma \ref{lemmaV-H} and Proposition \ref{teuxi0s}, $\mathfrak{G}^-_{\mathfrak{t}}(Y,K)$ must belong to one of the following two types in Rasmussen's notation: 

\begin{itemize}
\item [(\romannumeral1)] $\mathfrak{G}^-_{\mathfrak{t}}(Y,K)$ contains only one $\circ$ and all other elements are $+$'s and $-$'s; 
\item[(\romannumeral2)] $\mathfrak{G}^-_{\mathfrak{t}}(Y,K)$ contains several $\ast$'s and all other elements are $+$'s and $-$'s.
\end{itemize}

If we put $\mathfrak{G}^-_{\mathfrak{t}}(Y,K)$ in a column, then the two types are depicted in Table \ref{type12}.

%%%%%%%%%%%%%%%%%%%%%%%%%%%%%%%%%%%%%%%%%%%%%%
% \begin{figure}[t]
% \centering
% \subfigure[Two types $\mathfrak{G}^-_{\mathfrak{t}}(Y,K)$.]{\label{type12}
% \includegraphics[width=0.45\textwidth]{pictures/type12.pdf}}\quad\quad
% \subfigure[Rasmussen's notation for circular mapping cone.]{\label{Rasmussen's notation}
% \includegraphics[width=0.4\textwidth]{pictures/circular mapping cone R.pdf}}\quad
% \caption{} %\label{}
% \end{figure}

\begin{table}
\centering
 %\resizebox{\textwidth}{5mm}
 \renewcommand\arraystretch{1.3} 
\setlength{\tabcolsep}{5mm}{
\begin{tabular}{ccc}
%\begin{tabular}{p{5cm} p{5cm} p{5cm}}
$\vdots$ & $\vdots$ & $\vdots$\\
$\mathfrak{A}^-_{\xi^0_{\mathfrak{t}}+3 \cdot PD[\mu]}$ & $+$ & $+$ \\
$\mathfrak{A}^-_{\xi^0_{\mathfrak{t}}+2 \cdot PD[\mu]}$ & $+$ & $+$\\
$\mathfrak{A}^-_{\xi^0_{\mathfrak{t}}+PD[\mu]}$ & $+$ & $\ast$\\
$\mathfrak{A}^-_{\xi^0_{\mathfrak{t}}}$ & $\circ$ & $\ast$ \\
$\mathfrak{A}^-_{\xi^0_{\mathfrak{t}}-PD[\mu]}$ & $-$ & $\ast$ \\
$\mathfrak{A}^-_{\xi^0_{\mathfrak{t}}-2 \cdot PD[\mu]}$ & $-$ & $-$ \\
$\mathfrak{A}^-_{\xi^0_{\mathfrak{t}}-3 \cdot PD[\mu]}$ & $-$ & $-$ \\
$\vdots$ & $\vdots$ & $\vdots$\\
& {\tiny Type (\romannumeral1)} & {\tiny Type (\romannumeral2)}
\end{tabular}}
\vspace{1.5mm}
\caption{Two types $\mathfrak{G}^-_{\mathfrak{t}}(Y,K)$}
\label{type12}
\end{table}

%%%%%%%%%%%%%%%%%%%%%%%%%%%%%%%%%%%%%%%%%%%%%%%%%%%%%%%%%%%%%%%%%%%
\begin{comment}
\begin{table}
\centering
 %\resizebox{\textwidth}{5mm}
 \renewcommand\arraystretch{1.3} 
\setlength{\tabcolsep}{5mm}{
\begin{tabular}{ccc}
%\begin{tabular}{p{5cm} p{5cm} p{5cm}}
$\vdots$ & $\vdots$ & $\vdots$\\
$\mathfrak{A}^-_{\xi+3 \cdot PD[\mu]}$ & $+$ & $+$ \\
$\mathfrak{A}^-_{\xi+2 \cdot PD[\mu]}$ & $+$ & $+$\\
$\mathfrak{A}^-_{\xi+PD[\mu]}$ & $+$ & $\ast$\\
$\mathfrak{A}^-_{\xi}$ & $\circ$ & $\ast$ \\
$\mathfrak{A}^-_{\xi-PD[\mu]}$ & $-$ & $\ast$ \\
$\mathfrak{A}^-_{\xi-2 \cdot PD[\mu]}$ & $-$ & $-$ \\
$\mathfrak{A}^-_{\xi-3 \cdot PD[\mu]}$ & $-$ & $-$ \\
$\vdots$ & $\vdots$ & $\vdots$\\
& {\tiny Type (\romannumeral1)} & {\tiny Type (\romannumeral2)}
\end{tabular}}
\end{table}
\end{comment}
%%%%%%%%%%%%%%%%%%%%%%%%%%%%%%%%%%%%%%%%%%%%%%%%%%%%%%%%%%%%%%%%%%%

\begin{remark}
\label{FloersimpleType}
Floer simple knots are of type (\romannumeral1) for any $\rm Spin^c$ structure.
\end{remark}

We use Rasmussen's notation to represent the circular mapping cone $\mathfrak{D}^-_{\mathfrak{s}}: \mathfrak{A}^-_{\mathfrak{s}} \rightarrow \mathfrak{B}^-_{\mathfrak{s}}$ in Figure \ref{Rasmussen's notation}: Here, the outer circle of the diagram represents $\mathfrak{A}^-_\xi$, while the inner circle represents $\mathfrak{B}^-_\xi$. Each $\mathfrak{B}^-_\xi$ is represented by a filled circle. Maps with $V^-_{\xi}=0$ or $H^-_{\xi}=0$ are indicated by arrows, and maps with $V^-_{\xi}>0$ or $H^-_{\xi}>0$ are omitted. Equivalently, these arrows indicate whether the generator $1$ of the tower $\mathbb{F}[[U]]$ in each $\mathfrak{B}^-_\xi$ lies in the image of the corresponding maps $\mathfrak{v}^-_{\xi}$ or $\mathfrak{h}^-_{\xi}$.  

\begin{figure}[!h]
\label{Rasmussen's notation}
\centering
\begin{tikzpicture}
% \node[circle,
% minimum width =2pt ,
% minimum height =2pt ,draw=white] (1) at(0,3.6){\small $\mathfrak{A}^-_{\xi}$};

\node (1) at(2,0){$\circ$};
\node (2) at(1.73,1){$\circ$};
\node (3) at(1,1.73){$\circ$};
\node (4) at(0,2){$+$};
\node (5) at(-1,1.73){$\circ$};
\node (6) at(-1.73,1){$\circ$};
\node (7) at(-2,0){$-$};
\node (8) at(-1.73,-1){$\circ$};
\node (9) at(-1,-1.73){$\circ$};
\node (10) at(0,-2){$\ast$};
\node (11) at(1,-1.73){$\circ$};
\node (12) at(1.73,-1){$\ast$};
\node (13) at(0.966,0.26){$\bullet$};
\node (14) at(0.7,0.7){$\bullet$};
\node (15) at(0.26,0.966){$\bullet$};
\node (16) at(-0.26,0.966){$\bullet$};
\node (17) at(-0.7,0.7){$\bullet$};
\node (18) at(-0.966,0.26){$\bullet$};
\node (19) at(-0.966,-0.26){$\bullet$};
\node (20) at(-0.7,-0.7){$\bullet$};
\node (21) at(-0.26,-0.966){$\bullet$};
\node (22) at(0.26,-0.966){$\bullet$};
\node (23) at(0.7,-0.7){$\bullet$};
\node (24) at(0.966,-0.26){$\bullet$};

\draw[->] (1) --(13);
\draw[->] (2) --(13);
\draw[->] (2) --(14);
\draw[->] (3) --(14);
\draw[->] (3) --(15);
%\draw[->] (4) --(15);
\draw[->] (4) --(16);
\draw[->] (5) --(16);
\draw[->] (5) --(17);
\draw[->] (6) --(17);
\draw[->] (6) --(18);
\draw[->] (7) --(18);
%\draw[->] (7) --(19);
\draw[->] (8) --(19);
\draw[->] (8) --(20);
\draw[->] (9) --(20);
\draw[->] (9) --(21);
%\draw[->] (10) --(21);
%\draw[->] (10) --(22);
\draw[->] (11) --(22);
\draw[->] (11) --(23);
%\draw[->] (12) --(23);
%\draw[->] (12) --(24);
\draw[->] (1) --(24);
\end{tikzpicture}
\caption{Rasmussen's notation}
\label{Rasmussen's notation}
\end{figure}
%%%%%%%%%%%%%%%%%%%%%%%%%%%%%%%%%%%%%%%%%%%%%%%%%%%%%%%%%%%%%%%%%%%

We draw the circular mapping cones in a clockwise order. We use an interval $[a,b]$ to represent a local segment of circular mapping cones, where $a$ and $b$ are labeled with a $+$, $-$ or $\ast$, and all the elements in between are $\circ$. See Figure \ref{Non-surjective interval}. There might be more than one or no $\circ$ in between, but for convenience, we put one $\circ$ in every diagram in Figure \ref{Non-surjective interval}.

\begin{lemma}
\label{nonsurint}
If the labeled diagram in Rasmussen's notation for a map $\mathfrak{D}^-_{\mathfrak{s}}: \mathfrak{A}^-_{\mathfrak{s}} \rightarrow \mathfrak{B}^-_{\mathfrak{s}}$ contains one of the 4 types of intervals $[\ast, \ast]$, $[+,-]$, $[+, \ast]$ and $[\ast, -]$, then $\mathfrak{D}^-_{\mathfrak{s}}$ is not surjective.
\end{lemma}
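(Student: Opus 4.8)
The plan is to exploit the exact triangle (\ref{mappingconeexacttriangle}), which says that $\mathfrak{D}^-_{\mathfrak{s}}$ fails to be surjective precisely when the inclusion map ${incl}_{\ast}: \mathfrak{B}^-_{\mathfrak{s}} \to \boldsymbol{HF}^-(Y_{\lambda_r}(K),\mathfrak{s})$ is nonzero. So it suffices to produce, for each of the four interval patterns, an element of some tower $\mathbb{F}[[U]] \subset \mathfrak{B}^-_{\eta}$ (for $\eta = \xi + i \cdot PD[\lambda_r]$ one of the two endpoints of the interval) whose image under ${incl}_{\ast}$ is nonzero; equivalently, whose image is not in $\mathrm{Im}(\mathfrak{D}^-_{\mathfrak{s}})$. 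The natural candidate is the bottom element $1$ of the tower in the $\mathfrak{B}^-$ sitting at an endpoint. Working in the circular picture of Figure \ref{circularmappingcone}, each $\mathfrak{B}^-_{\eta}$ receives exactly two arrows, one $\mathfrak{v}^-$ from $\mathfrak{A}^-_{\eta}$ and one $\mathfrak{h}^-$ from $\mathfrak{A}^-_{\eta - PD[\lambda_r]}$ (indices mod $p$); the element $1 \in \mathfrak{B}^-_{\eta}$ is hit by $\mathfrak{D}^-_{\mathfrak{s}}$ only if it is in the image of one of these two maps, i.e.\ only if the relevant $V^-$ or $H^-$ invariant vanishes.

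First I would set up the local model. Fix an interval $[a,b]$ in the labeled circular diagram: reading clockwise, $a$ and $b$ are $\mathfrak{A}^-$ complexes labeled by $+,-$ or $\ast$, and every $\mathfrak{A}^-$ strictly between them is of type $\circ$. Let the $\mathfrak{B}^-$ complexes inside this segment be $B_1, \dots, B_m$ in clockwise order (with $m \geq 1$ after inserting a $\circ$ as in Figure \ref{Non-surjective interval}), so that $B_1$ receives its $\mathfrak{h}^-$ from $a$ and its $\mathfrak{v}^-$ from the first $\circ$; $B_m$ receives its $\mathfrak{v}^-$ from $b$ and its $\mathfrak{h}^-$ from the last $\circ$; and each internal $B_j$ receives both its $\mathfrak{v}^-$ and $\mathfrak{h}^-$ from $\circ$-type $\mathfrak{A}^-$'s. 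The key structural fact I would use is that each $\mathfrak{A}^-_{\xi}$ of type $\circ$ has $V^-_{\xi}=H^-_{\xi}=0$, so both of its outgoing maps $\mathfrak{v}^-_{\xi}, \mathfrak{h}^-_{\xi}$ are surjective onto the towers, whereas the outgoing maps from $a$ (resp.\ $b$) that point \emph{into} the interval are \emph{not} surjective: for $[+,-]$ the map out of $a=+$ along $\mathfrak{h}^-$ is non-surjective (since $H^-_a>0$) and the map out of $b=-$ along $\mathfrak{v}^-$ is non-surjective (since $V^-_b>0$); and similarly $[\ast,\ast]$, $[+,\ast]$, $[\ast,-]$ are exactly the cases where the $\mathfrak{h}^-$ leaving the left endpoint and the $\mathfrak{v}^-$ leaving the right endpoint are both non-surjective. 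That is precisely why these four and only these four patterns appear in the statement.

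The main computation is then a linear-algebra/triangularity argument internal to the segment. Consider the sub-chain-complex-like piece of $\mathfrak{D}^-_{\mathfrak{s}}$ spanned by $\{a, \circ\text{'s}, b\}$ on the source side and $\{B_1,\dots,B_m\}$ on the target side. I claim the bottom tower element $1 \in B_1$ is not in the image of $\mathfrak{D}^-_{\mathfrak{s}}$. Indeed, any source element mapping to something with nonzero $B_1$-component must involve $a$ (through $\mathfrak{h}^-_a$) or the first $\circ$ (through $\mathfrak{v}^-$). Using $\mathfrak{h}^-_a$ alone cannot produce $1 \in B_1$ because $H^-_a > 0$ forces $\mathrm{Im}(\mathfrak{h}^-_a)$ to miss the bottom of the tower; using the first $\circ$ forces one also to account for its other outgoing map into $B_2$, and then to cancel that $B_2$-component one is forced to use the next $\circ$, which feeds $B_3$, and so on — a cascade propagating clockwise through all the internal $\circ$'s until it reaches $b$, at which point the leftover $B_m$-component can only be killed using $\mathfrak{v}^-_b$, which is impossible at the bottom of the tower since $V^-_b>0$. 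Hence no combination closes up, and $1 \in B_1$ (more precisely its class in $\mathfrak{B}^-_{\mathfrak{s}}$) is not in $\mathrm{Im}(\mathfrak{D}^-_{\mathfrak{s}})$, so $\mathfrak{D}^-_{\mathfrak{s}}$ is not surjective. I expect the bookkeeping of this cascade — making the ``triangular'' propagation argument precise uniformly over the four patterns and over arbitrary numbers of interior $\circ$'s, and handling $U$-power shifts correctly in $\mathbb{F}[[U]]$ — to be the main obstacle; the cleanest route is probably to argue by contradiction with a putative preimage $\sum c_j$ and track the lowest $U$-power appearing in each $B_j$-component, showing the obstruction never cancels.
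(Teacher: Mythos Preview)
Your proposal is correct and takes essentially the same approach as the paper: exhibit an explicit element of $\mathfrak{B}^-_{\mathfrak{s}}$ (the generator $1$ of one of the towers inside the interval) that cannot lie in the image of $\mathfrak{D}^-_{\mathfrak{s}}$, via exactly the cascade/triangularity argument you describe. The only cosmetic difference is that the paper places the $1$ in the tower under the \emph{right} endpoint $b$ (and implicitly cascades leftward toward $a$), whereas you place it at $B_1$ and cascade rightward toward $b$; both directions work for the same reason, and your write-up is in fact more detailed than the paper's, which simply asserts the element $(0,\dots,0,1)$ is not in the image.
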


\begin{proof}
For the interval $[\ast, \ast]$, the element $(0,0, \dots, 0, 1)$ in $\mathfrak{B}^-_{\mathfrak{s}}$ is not in the image of $\mathfrak{D}^-_{\mathfrak{s}}$, where $(0,0, \dots, 0, 1)$ represents the element whose all components are $0$ except the last one under the right-handed $\ast$ which equals the generator $1$ of the tower $\mathbb{F}[[U]]$.  Thus, a circular mapping cone containing a $[\ast,\ast]$-interval is not surjective. Similarly, the element $(0,0, \dots, 0, 1)$ in $\mathfrak{B}^-_{\mathfrak{s}}$ is not in the image of $\mathfrak{D}^-_{\mathfrak{s}}: \mathfrak{A}^-_{\mathfrak{s}} \rightarrow \mathfrak{B}^-_{\mathfrak{s}}$ with $\mathfrak{A}^-_{\mathfrak{s}}$ containing an interval of type $[+,-]$, $[+, \ast]$ or $[\ast, -]$, where the $1$ component is under the right-handed $-$ or $\ast$.
\end{proof}

\begin{figure}[H]
\centering
\subfigure[Interval $\lbrack \ast , \ast \rbrack$]{
\begin{minipage}[t]{0.2\textwidth}
						\centering
						\begin{displaymath}
						\xymatrixrowsep{5mm}
						\xymatrixcolsep{4mm}
						\xymatrix{
							\ast & \circ \ar[d] \ar[rd] & \ast\\
							\bullet & \bullet & \bullet }
						\end{displaymath}
					\end{minipage}}
\subfigure[Interval $\lbrack +,- \rbrack$]{
\begin{minipage}[t]{0.2\textwidth}
			\centering
			\begin{displaymath}
			\xymatrixrowsep{5mm}
			\xymatrixcolsep{4mm}
			\xymatrix{
				+ \ar[d] & \circ \ar[d] \ar[rd] & - \ar[rd]\\
				\bullet & \bullet & \bullet& \bullet}
			\end{displaymath}
		\end{minipage}}
\subfigure[Interval $\lbrack + ,\ast \rbrack$]{
\begin{minipage}[t]{0.2\textwidth}
				\centering
				\begin{displaymath}
				\xymatrixrowsep{5mm}
				\xymatrixcolsep{4mm}
				\xymatrix{
					+ \ar[d] & \circ \ar[d] \ar[rd] & \ast  \\
					\bullet & \bullet & \bullet }
				\end{displaymath}
			\end{minipage}}
\subfigure[Interval $\lbrack \ast, - \rbrack$]{
		\begin{minipage}[t]{0.2\textwidth}
					\centering
					\begin{displaymath}
					\xymatrixrowsep{5mm}
					\xymatrixcolsep{4mm}
					\xymatrix{
						\ast  & \circ \ar[d] \ar[rd] & - \ar[rd] &\\
						\bullet & \bullet & \bullet & \bullet}
					\end{displaymath}
				\end{minipage}}
\caption{Non-surjective interval}\label{Non-surjective interval}
\end{figure}

\begin{lemma}
\label{notsurpat}
The map $\mathfrak{D}^-_{\mathfrak{s}}: \mathfrak{A}^-_{\mathfrak{s}} \rightarrow \mathfrak{B}^-_{\mathfrak{s}}$ is not surjective if $\mathfrak{A}^-_{\mathfrak{s}}$ contains a $\ast$.
\end{lemma}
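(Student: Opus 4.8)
The plan is to reduce the statement to the non‑surjective‑interval criterion of Lemma~\ref{nonsurint}. Explicitly, I want to establish the combinatorial claim that if one vertex of the circular mapping cone carries the label $\ast$, then the cyclic sequence of labels of the $p$ vertices $\mathfrak{A}^-_{\xi+i\cdot PD[\lambda_r]}$, $i\in\mathbb{Z}_p$, must contain one of the four intervals $[\ast,\ast]$, $[+,-]$, $[+,\ast]$, $[\ast,-]$. Lemma~\ref{nonsurint} then gives the conclusion immediately.

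To prove the claim I would argue by contradiction: assume none of the four intervals appears, and list the non‑$\circ$ vertices in clockwise cyclic order as $c_0,c_1,\dots,c_{m-1}$, choosing $c_0$ to be a vertex labelled $\ast$ (one exists by hypothesis). Forbidding $[+,-]$ and $[+,\ast]$ says that in this cyclic word every $+$ is immediately succeeded by a $+$; forbidding $[\ast,-]$ and $[\ast,\ast]$ says every $\ast$ is also immediately succeeded by a $+$. Reading clockwise starting at $c_0=\ast$, a trivial induction forces $c_1=c_2=\dots=c_{m-1}=+$, and then the interval that wraps from $c_{m-1}$ back to $c_0$ is $[+,\ast]$ whenever $m\geq 2$ — a contradiction.

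The step I expect to need the most care is the degenerate case $m=1$, where $\xi_0$ is the unique non‑$\circ$ vertex, the other $p-1$ vertices are all of type $\circ$, and the interval $[\ast,\ast]$ wraps once around the whole circle — a configuration not literally drawn in Figure~\ref{Non-surjective interval}. I would dispatch it by re‑running the argument of Lemma~\ref{nonsurint} directly. Let $e\in\mathfrak{B}^-_\mathfrak{s}$ be the element equal to the tower generator in the slot $\mathfrak{B}^-_{\xi_0}$ and zero in every other slot, and suppose $\mathfrak{D}^-_\mathfrak{s}(x)=e$. Reading this equation coordinate by coordinate around the cycle of $\circ$‑vertices, and using that $\mathfrak{v}^-$ and $\mathfrak{h}^-$ are isomorphisms on the towers at each $\circ$‑vertex, one finds that every coordinate of $x$ lying over a $\circ$‑vertex must have a unit tower component; propagating all the way around, the equation over $\mathfrak{B}^-_{\xi_0}$ itself then becomes unsolvable, since at $\xi_0=\ast$ the images of $\mathfrak{v}^-_{\xi_0}$ and $\mathfrak{h}^-_{\xi_0}$ meet the tower only inside $U\cdot\mathbb{F}[[U]]$ and so cannot supply the missing tower generator. (If one prefers, one may instead read Lemma~\ref{nonsurint} as already covering this degenerate interval, in which case nothing further is needed.) Apart from this bookkeeping the argument is short.
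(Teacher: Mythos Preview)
Your proposal is correct and follows essentially the same approach as the paper: both start at the $\ast$ vertex and walk clockwise through the cyclic sequence of non-$\circ$ labels to locate one of the four non-surjective intervals of Lemma~\ref{nonsurint}, with the paper phrasing this as a direct case split and you as a contradiction argument. The paper likewise singles out the degenerate wrap-around $[\ast,\ast]$ case and simply remarks, as you do parenthetically, that the argument of Lemma~\ref{nonsurint} still applies there.
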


\begin{proof}
Suppose there exists a $\ast$ in $\mathfrak{A}^-_{\mathfrak{s}}$. We start from that $\ast$ and go clockwise along the outer circle: \\

Case 1: If the first non-$\circ$ element we meet is either $\ast$ or $-$, then we get an interval of type $[\ast, \ast]$ or $[\ast, -]$, and Lemma \ref{nonsurint} implies that $\mathfrak{D}^-_{\mathfrak{s}}$ is not surjective. In particular, in the case where all elements are $\circ$'s before we return to the original $\ast$ on the circle, then we have a special $[\ast,\ast]$-interval for which the argument in Lemma \ref{nonsurint} still applies. \\

Case 2: If the first non-$\circ$ element we meet is a $+$, then we can keep going until we meet a $-$ or a $\ast$, which will definitely happen since we will eventually return to the original $\ast$ if we do not meet any $-$. Then we will have an interval of type $[+,-]$ or $[+,\ast]$, and Lemma \ref{nonsurint} implies that $\mathfrak{D}^-_{\mathfrak{s}}$ is not surjective. 
\end{proof}

\section{The rational slice genus bound for knots whose rational longitude is a framing}
\label{The rational slice genus bound for knots whose rational longitude is a framing}

In this section, we show our slice genus bound for knots whose rational longitude is a framing. Let us first recall some useful results in Heegaard Floer theory. 

%Let $K$ be a knot in a rational homology 3-sphere $Y$, and $\mathcal{H}_1=(\Sigma, \boldsymbol{\alpha}, \boldsymbol{\beta}, w, z)$ be a doubly pointed Heegaard diagram for $(Y,K)$. Consider the knot given by the doubly pointed Heegaard diagram $\mathcal{H}_2=(\Sigma, \boldsymbol{\beta}, \boldsymbol{\alpha}, w, z)$. Firstly, $(\Sigma, \boldsymbol{\beta}, \boldsymbol{\alpha})$ gives $-Y$, $Y$ with reversed orientation. In $\mathcal{H}_2$, the knot is specified by connecting $w$ to $z$ in the handlebody $U_{\boldsymbol{\beta}}$ given by the $\boldsymbol{\beta}$ curves and connecting $z$ to $w$ in the handlebody $U_{\boldsymbol{\alpha}}$ given by the $\boldsymbol{\alpha}$ curves. But in $\mathcal{H}_1$, we go from $w$ to $z$ in $U_{\boldsymbol{\alpha}}$ and go from $z$ to $w$ in $U_{\boldsymbol{\beta}}$. Therefore, $\mathcal{H}_2$ gives $(-K, -Y)$. By analyzing the two Heegaard diagram carefully, we have that for any $\rm Spin^c$ structure $\mathfrak{s}$,
%Given any intersection $\boldsymbol{x}$ of $\boldsymbol{\alpha}$ and $\boldsymbol{\beta}$ curves,

\subsection{Symmetries in knot Floer homology}

Let $K$ be a knot in a rational homology 3-sphere $Y$. Reversing both the orientations of $Y$ and $K$, we obtain the mirror $-K\subset -Y$. 
Given any $\rm Spin^c$ structure $\mathfrak{s} \in {\rm Spin^c}(Y) \cong {\rm Spin^c}(-Y)$, we have the filtered chain homotopy equivalence 
\begin{equation}
CFK^{\infty}(-Y,-K, J\mathfrak{s}) \simeq CFK^{\infty}(Y,K,\mathfrak{s})^{\ast},
\end{equation}
where $CFK^{\infty}(Y,K,\mathfrak{s})^{\ast}$ denotes the dual complex, i.e., ${\rm Hom}_{\mathbb{F}[U,U^{-1}]}(CFK^{\infty}(Y,K,\mathfrak{s}),\mathbb{F}[U,U^{-1}])$, and $J$ is the conjugation on ${\rm Spin^c}(Y)$. More explicitly, we can obtain $CFK^{\infty}(Y,K,\mathfrak{s})^{\ast}$ by rotating the complex $CFK^{\infty}(Y,K,\mathfrak{s})$ by $180^{\circ}$ and then reversing the directions of all of the arrows. %Suppose that $(Y,K)$ can be described by the doubly pointed Heegaard diagram $(\Sigma, \boldsymbol{\alpha}, \boldsymbol{\beta}, w, z)$, then the doubly pointed Heegaard diagram $(\Sigma, \boldsymbol{\beta}, \boldsymbol{\alpha}, w, z)$ gives $(-Y,-K)$. 
%By analyzing the two Heegaard diagrams carefully, we get that $CFK^{\infty}(-Y,-K, J\mathfrak{s})$ is filtered chain homotopy equivalent to $CFK^{\infty}(Y,K,\mathfrak{s})^{\ast}$. 
See \cite[Section 3.5]{OSk} for more details. 

\medskip
%\begin{remark}
Our next statement will simultaneously involve $V^-_{\xi}$ invariants for both $(Y,K)$ and its mirror $(-Y,-K)$. To avoid potential confusion, we will write $V^-_{\xi}(Y,K)$ and $V^-_{\xi}(-Y,-K)$ to indicate which knot we are considering.  In \cite[Proposition 3.11]{Hom}, Hom proved the result below for knots in $S^3$, but the argument immediately generalizes to the situation considered here.
\begin{prop}{\rm \cite[Proposition 3.11]{Hom}}
\label{V=V=0}
Given any pair of conjugated $\rm Spin^c$ structures $\mathfrak{s}, J\mathfrak{s} \in {\rm Spin^c}(Y) \cong {\rm Spin^c}(-Y)$, let $\xi^0_{\mathfrak{s}}$ and $\xi^0_{J\mathfrak{s}}$ be the middle relative $\rm Spin^c$ structures associated to $\mathfrak{s}$ and $J\mathfrak{s}$ respectively. If 
\[V^-_{\xi^0_{\mathfrak{s}}}(Y,K)=V^-_{\xi^0_{J\mathfrak{s}}}(-Y,-K)=0,\] 
then there exists a basis for $CFK^{\infty}(Y,K,\mathfrak{s})$ with a basis element $x$ which generates the homology $HFK^{\infty}(Y,K,\mathfrak{s})$ and splits off as a direct summand of $CFK^{\infty}(Y,K,\mathfrak{s})$, where $x$ is supported in the relative $\rm Spin^c$ structure $\xi^0_{\mathfrak{s}}$. In other words,
\[CFK^{\infty}(Y,K,\mathfrak{s}) \simeq CFK^{\infty}(S^3,U) \oplus A,\]
where $U$ denotes the unknot in $S^3$ and $A$ is some acyclic complex.
\end{prop}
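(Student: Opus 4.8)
The plan is to reduce to Hom's argument for knots in $S^3$, \cite[Proposition 3.11]{Hom}, after checking that its two $S^3$-specific inputs remain available. The first input is that the knot Floer homology of the knot over $\mathbb{F}[U,U^{-1}]$ is a single free module of rank one; this holds since $HFK^\infty(Y,K,\mathfrak{s}) \cong HF^\infty(Y,\mathfrak{s}) \cong \mathbb{F}[U,U^{-1}]$ for a rational homology 3-sphere $Y$. The second input is the conjugation symmetry $CFK^\infty(-Y,-K,J\mathfrak{s}) \simeq CFK^\infty(Y,K,\mathfrak{s})^\ast$, realized by the $180^\circ$ rotation-with-arrow-reversal recalled above, under which the middle relative ${\rm Spin^c}$ structure $\xi^0_{J\mathfrak{s}}$ of $(-Y,-K)$ corresponds to $\xi^0_\mathfrak{s}$ of $(Y,K)$ with Alexander gradings negated.

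First I would reformulate the hypotheses. Applying Proposition \ref{teuxi0s} to $(Y,K)$ gives $V^-_{\xi^0_\mathfrak{s}}(Y,K) = H^-_{\xi^0_\mathfrak{s}}(Y,K)$, so the assumption $V^-_{\xi^0_\mathfrak{s}}(Y,K) = 0$ upgrades to $V^-_{\xi^0_\mathfrak{s}}(Y,K) = H^-_{\xi^0_\mathfrak{s}}(Y,K) = 0$. In a vertically simplified basis of $CFK^\infty(Y,K,\mathfrak{s})$ over $\mathbb{F}[U,U^{-1}]$, let $x_0$ be the distinguished unpaired generator of the vertical homology; the vanishing $V^-_{\xi^0_\mathfrak{s}}(Y,K) = 0$ says that $x_0$ is supported at Alexander level at most $A_{Y,K}(\xi^0_\mathfrak{s})$. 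Applying Proposition \ref{teuxi0s} to $(-Y,-K)$ and transporting the resulting statement through the conjugation symmetry, the assumption $V^-_{\xi^0_{J\mathfrak{s}}}(-Y,-K) = 0$ says that $x_0$ is supported at Alexander level at least $A_{Y,K}(\xi^0_\mathfrak{s})$. Hence $x_0$ is supported exactly in $\xi^0_\mathfrak{s}$; by the symmetry the analogous generator of the horizontal homology is too. This two-sided vanishing is the rational-homology-sphere analogue of the condition $\varepsilon(K)=0$ under which Hom proves her splitting.

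Next I would run the change-of-basis argument of \cite{Hom}. Starting from the vertically simplified basis with distinguished generator $x_0$, I pass to a horizontally simplified basis and use the two-sided vanishing to rule out the configurations in which $x_0$ is the source or target of a horizontal arrow; one then performs a finite sequence of filtered changes of basis which successively cancel every differential arrow into and out of $x_0$ without reintroducing such a horizontal arrow, exactly as in the proof of \cite[Proposition 3.11]{Hom}. The result is a filtered basis in which $x_0$ is a cycle carrying no incoming or outgoing arrows, so $\mathbb{F}[U,U^{-1}]\langle x_0\rangle$ splits off as a direct summand; since $x_0$ generates $HFK^\infty(Y,K,\mathfrak{s}) \cong \mathbb{F}[U,U^{-1}]$, the complementary summand $A$ is acyclic and $\mathbb{F}[U,U^{-1}]\langle x_0\rangle \cong CFK^\infty(S^3,U)$. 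As a consistency check, once $x_0$ is isolated one reads off $V^-_\xi(Y,K) = H^-_\xi(Y,K) = 0$ for the Alexander level $\xi$ carrying $x_0$, so $\xi = \xi^0_\mathfrak{s}$ by the uniqueness in Proposition \ref{teuxi0s}, as claimed.

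The main obstacle is the change-of-basis bookkeeping of the third step: one must verify that the cancellation moves terminate and do not recreate an arrow at $x_0$, which is handled by the usual filtration bookkeeping on the finitely generated complex $CFK^\infty(Y,K,\mathfrak{s})$ just as in \cite{Hom}. A secondary point requiring care is matching up the relative ${\rm Spin^c}$ indexing and Alexander gradings under the conjugation symmetry in the first two steps, so that the two separate vanishing hypotheses genuinely pin both homology generators to the single level $\xi^0_\mathfrak{s}$; everything else transfers from the $S^3$ case once the two inputs above are in hand.
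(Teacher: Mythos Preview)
Your proposal is correct and aligns with the paper's treatment: the paper does not give an independent proof but simply remarks before stating the proposition that Hom's argument for $S^3$ ``immediately generalizes to the situation considered here,'' citing \cite[Proposition 3.11]{Hom}. Your sketch supplies the details the paper omits---identifying the two ingredients (rank-one $HF^\infty$ for a rational homology sphere and the mirror duality $CFK^\infty(-Y,-K,J\mathfrak{s})\simeq CFK^\infty(Y,K,\mathfrak{s})^\ast$) that let Hom's change-of-basis argument run verbatim---and is exactly the intended generalization.
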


\medskip

In \cite[Section 3.5]{OSk}, Ozsv\'{a}th and Szab\'{o} exhibited another symmetry between $CFK^{\infty}(Y,K,\mathfrak{\xi})$ and $CFK^{\infty}(Y,K,\tilde{J}\xi)$. %, $CFK^{\infty}(Y,K,\mathfrak{s})$ is filtered chain homotopy equivalent to $CFK^{\infty}(Y,K,J\mathfrak{s}-PD[K])$ \footnote{Ozsv\'{a}th and Szab\'{o} study null-homologous knots in \cite{OSk}, so there is no $PD[K]$ term in their symmetry.}. 
See also \cite[Section 3]{NW} for the rationally null-homologous knot case. Here, $\tilde{J}$ is defined as a map 
\[\tilde{J}: \underline{\rm Spin^c}(Y,K) \rightarrow \underline{\rm Spin^c}(Y,K)\]
with $\tilde{J}(\xi)=J(\xi)-PD[\mu]$ \footnote{In \cite{OSl}, $\tilde{J}(\xi)=J(\xi)+PD[\mu]$ instead, due to the different knot orientation convention used.}, where $J$ is the conjugation on $\underline{\rm Spin^c}(Y,K)$. Thus for any $\xi \in \underline{\rm Spin^c}(Y,K)$,
\begin{align}
A_{Y,K}(\tilde{J}{\xi})&=A_{Y,K}(J{\xi}-PD[\mu]) \label{Axi=-AJxi}\\
&=\frac{\langle c_1(J\xi)-2PD[\mu], [S]\rangle + [\mu] \cdot [S]}{2 [\mu] \cdot [S]} \notag \\
&=\frac{-\langle c_1(\xi), [S] \rangle -[\mu] \cdot [S]}{2 [\mu] \cdot [S]} \notag\\
&=-A_{Y,K}(\xi). \notag
\end{align}
More concretely, we may understand the above conjugation symmetry using two doubly pointed Heegaard diagrams 
\[\mathcal{H}_1=(\Sigma, \alpha, \beta, w, z) \quad {\rm and} \quad \mathcal{H}_2=(-\Sigma, \beta, \alpha, z, w)\]
for $(Y,K)$. Clearly, the two diagrams $\mathcal{H}_1$ and $\mathcal{H}_2$ give the same generators for $CFK^{\infty}$. %and by pre-composing with a reflection on the disk, we see that the differentials given by the two diagrams coincide ignoring the order of $z$ and $w$. 
If $\phi$ is a holomorphic disk in $\mathcal{H}_1$ connecting two generators $\boldsymbol{x}$ to $\boldsymbol{y}$, then there is a holomorphic disk $\bar{\phi}$ connecting $\boldsymbol{x}$ to $\boldsymbol{y}$ in $\mathcal{H}_2$ with the same underlying topological disk, and vice versa. %connecting $\boldsymbol{x}$ to $\boldsymbol{y}$. 
Let 
\[\underline{\mathfrak{s}}^1_{w,z}: \mathbb{T}_{\alpha} \cap \mathbb{T}_{\beta} \rightarrow \underline{\rm Spin^c}(Y,K) \quad {\rm and} \quad \underline{\mathfrak{s}}^2_{z,w}: \mathbb{T}_{\alpha} \cap \mathbb{T}_{\beta} \rightarrow \underline{\rm Spin^c}(Y,K)\] 
denote the induced maps to relative $\rm Spin^c$ structures for $\mathcal{H}_1$ and $\mathcal{H}_2$ respectively. Then for any $\boldsymbol{y} \in \mathbb{T}_{\alpha} \cap \mathbb{T}_{\beta}$, 
\[\tilde{J}\underline{\mathfrak{s}}^1_{w,z}(\boldsymbol{y})=\underline{\mathfrak{s}}^2_{z,w}(\boldsymbol{y}) \]
which explains the $\tilde{J}$ map on relative $\rm Spin^c$ structures in the symmetry. As we have also swapped the two basepoints $w$ and $z$ in $\mathcal{H}_1$ and $\mathcal{H}_2$ that leads to the interchanging role of the two filtrations $i$ and $j$, the map $v^-_{\xi}$ is chain homotopy equivalent to $h^-_{\tilde{J}(\xi)}$ for any $\xi \in \underline{\rm Spin^c}(Y,K)$. %See the proof of \cite[Proposition 8.2]{OSl}. 
Thus we have 
\begin{lemma}
For any $\xi \in \underline{\rm Spin^c}(Y,K)$, 
\begin{equation}
V^-_{\xi}(K)=H^-_{\tilde{J}(\xi)}(K).
\end{equation}
\end{lemma}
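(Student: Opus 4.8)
The plan is to deduce the identity directly from the conjugation symmetry of the knot Floer complex just described, which identifies $CFK^\infty(Y,K,\xi)$ with $CFK^\infty(Y,K,\tilde{J}\xi)$ via the pair of doubly pointed Heegaard diagrams $\mathcal{H}_1=(\Sigma,\alpha,\beta,w,z)$ and $\mathcal{H}_2=(-\Sigma,\beta,\alpha,z,w)$. The key point is that passing from $\mathcal{H}_1$ to $\mathcal{H}_2$ interchanges the two basepoints, hence interchanges the algebraic filtration $i$ (recorded by $w$) and the Alexander filtration $j$ (recorded by $z$). Under this identification the complex $B^-_\xi(K)=C_\xi\{i\le 0\}$ arising from $\mathcal{H}_1$ corresponds to $C_{\tilde J\xi}\{j\le 0\}\cong B^-_{\tilde J\xi+PD[\lambda]}(K)$ arising from $\mathcal{H}_2$, and the vertical projection $v^-_\xi\colon A^-_\xi(K)\to B^-_\xi(K)$, which forgets the $z$-basepoint, corresponds to the horizontal projection $h^-_{\tilde J\xi}\colon A^-_{\tilde J\xi}(K)\to B^-_{\tilde J\xi+PD[\lambda]}(K)$, which forgets the $w$-basepoint, up to filtered chain homotopy equivalence. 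This is exactly the statement $v^-_\xi\simeq h^-_{\tilde J(\xi)}$ recorded just before the lemma.

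Passing to homology, the chain homotopy equivalence $v^-_\xi\simeq h^-_{\tilde J(\xi)}$ together with the compatible identifications $H_\ast(\boldsymbol{A}^-_\xi)\cong H_\ast(\boldsymbol{A}^-_{\tilde J\xi})$ and $H_\ast(\boldsymbol{B}^-_\xi)\cong H_\ast(\boldsymbol{B}^-_{\tilde J\xi+PD[\lambda]})$ shows that $v^-_{\xi,\ast}$ and $h^-_{\tilde J(\xi),\ast}$ agree after these isomorphisms, in particular on the distinguished $\mathbb{F}[U]$-summand of each domain and target. Since by definition $V^-_\xi(K)=\mathrm{rank}\big(\mathbb{F}[U]/\mathrm{Im}(v^-_{\xi,\ast})\big)$ and $H^-_{\tilde J(\xi)}(K)=\mathrm{rank}\big(\mathbb{F}[U]/\mathrm{Im}(h^-_{\tilde J(\xi),\ast})\big)$ — equivalently the $U$-powers by which the respective tower maps act — the equality $V^-_\xi(K)=H^-_{\tilde J(\xi)}(K)$ follows at once.

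The only real subtlety, and the step I would be most careful about, is the bookkeeping of relative $\rm Spin^c$ structures: one must check that the map coming from $\mathcal{H}_1$ in the label $\xi$ really matches the \emph{horizontal} map $h^-$ in the label $\tilde J(\xi)=J(\xi)-PD[\mu]$, rather than $h^-_{J\xi}$ or $v^-_{\tilde J\xi}$. This is precisely controlled by the identity $\tilde J\,\underline{\mathfrak{s}}^1_{w,z}(\boldsymbol{y})=\underline{\mathfrak{s}}^2_{z,w}(\boldsymbol{y})$ on intersection points together with the Alexander-grading computation (\ref{Axi=-AJxi}), which pin down the shift by $PD[\mu]$; the swap of $w$ and $z$ then accounts for the interchange of $v^-$ and $h^-$ and for the shift of the target label by $PD[\lambda]$. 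Everything else is a routine consequence of naturality of the filtrations under changing which basepoint plays the algebraic role, and (if one prefers) the statement can equivalently be read off for $V^+,H^+$ by the Hom--Lidman identification of the minus- and plus-version invariants.
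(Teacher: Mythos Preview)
Your proof is correct and follows essentially the same approach as the paper: the paper's argument is simply the one-line observation (stated immediately before the lemma) that swapping the basepoints $w,z$ in passing from $\mathcal{H}_1$ to $\mathcal{H}_2$ interchanges the $i$- and $j$-filtrations, so $v^-_\xi$ is chain homotopy equivalent to $h^-_{\tilde J(\xi)}$, whence the equality of $V^-$ and $H^-$. Your write-up expands this into careful bookkeeping of the relative ${\rm Spin^c}$ labels and the target shift by $PD[\lambda]$, but the underlying idea is identical.
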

In Rasumussen's notation, this implies
\begin{corollary}
\label{xitjxit}
If $\mathfrak{A}^-_{\xi}=\circ, \ast, + \,\, {\rm or} \,\, -$, then the corresponding $\mathfrak{A}^-_{\tilde{J}\xi}=\circ, \ast, - \,\, {\rm or} \,\, +$ respectively.
\end{corollary}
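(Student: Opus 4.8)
The plan is to obtain this immediately from the preceding Lemma, which gives $V^-_{\xi}(K) = H^-_{\tilde{J}(\xi)}(K)$ for every $\xi \in \underline{\rm Spin^c}(Y,K)$, once one knows that $\tilde{J}$ is an involution so that the identity can be read in both directions. First I would verify $\tilde{J} \circ \tilde{J} = \mathrm{id}$. Since $c_1(J\xi) = -c_1(\xi)$, conjugation acts on the affine space $\underline{\rm Spin^c}(Y,K)$ over $H^2(Y,K)$ by $J(\xi + \kappa) = J(\xi) - \kappa$; hence
\[
\tilde{J}(\tilde{J}(\xi)) = J\bigl(J(\xi) - PD[\mu]\bigr) - PD[\mu] = \bigl(\xi + PD[\mu]\bigr) - PD[\mu] = \xi .
\]

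Next, applying the Lemma with $\tilde{J}(\xi)$ in place of $\xi$ and using $\tilde{J}(\tilde{J}(\xi)) = \xi$ yields $V^-_{\tilde{J}(\xi)}(K) = H^-_{\xi}(K)$. Combined with the Lemma itself, this shows that replacing $\xi$ by $\tilde{J}(\xi)$ interchanges the two invariants:
\[
V^-_{\tilde{J}(\xi)}(K) = H^-_{\xi}(K), \qquad H^-_{\tilde{J}(\xi)}(K) = V^-_{\xi}(K) .
\]
The conclusion then follows by inspecting the four defining cases of Rasmussen's notation. If $\mathfrak{A}^-_{\xi} = \circ$, both invariants vanish for $\xi$, hence for $\tilde{J}(\xi)$, so $\mathfrak{A}^-_{\tilde{J}(\xi)} = \circ$; if $\mathfrak{A}^-_{\xi} = \ast$, both are positive for $\xi$ and therefore for $\tilde{J}(\xi)$, so $\mathfrak{A}^-_{\tilde{J}(\xi)} = \ast$; if $\mathfrak{A}^-_{\xi} = +$, i.e.\ $V^-_{\xi}(K) = 0 < H^-_{\xi}(K)$, then $V^-_{\tilde{J}(\xi)}(K) = H^-_{\xi}(K) > 0$ and $H^-_{\tilde{J}(\xi)}(K) = V^-_{\xi}(K) = 0$, so $\mathfrak{A}^-_{\tilde{J}(\xi)} = -$; and symmetrically $-$ is sent to $+$.

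I do not anticipate a real obstacle: the whole content is packaged in the preceding Lemma, and the only point that needs care is checking that $\tilde{J}$ squares to the identity, which is exactly what upgrades the one-sided equality $V^-_{\xi} = H^-_{\tilde{J}\xi}$ to the symmetric swap of $V^-$ and $H^-$ used in the case-by-case verification.
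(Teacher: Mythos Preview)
Your proof is correct and follows the same approach as the paper, which simply records the corollary as an immediate consequence of the lemma $V^-_\xi(K)=H^-_{\tilde J(\xi)}(K)$ without writing out the details. Your explicit verification that $\tilde J$ is an involution, allowing the lemma to be read symmetrically, is exactly the point needed to justify the case-by-case check.
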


Finally, \cite[Lemma 3.2]{NW} implies that for any $\xi\in \underline{\rm Spin^c}(Y,K)$, 
$$G_{Y,K}(\tilde{J}\xi)=JG_{Y,K}(\xi)-PD[K]. \footnote{In \cite{NW}, $G_{Y,K}(\tilde{J}\xi)=JG_{Y,K}(\xi)+PD[K]$ instead, due to the different knot orientation convention used.}  $$
Hence, the above conjugation symmetry descends to an isomorphism 
\begin{equation}\label{CFisomorphism}
CF^{\infty}(Y, \mathfrak{s})  \rightarrow CF^{\infty}(Y, J\mathfrak{s} -PD[K]).
\end{equation}

\subsection{The rational slice genus bound for knots whose rational longitude is a framing.}
\begin{theorem}
\label{fsgbound}
Let $K$ be a knot in a rational homology 3-sphere $Y$ whose rational longitude is a framing. Then 
\begin{equation}
\label{nus0g}
\nu^+(Y,K) \leq \|K\|_{Y \times I}^{\partial } + \frac{1}{2},
\end{equation}
and 
\begin{equation}
\label{ds0g}
\mathop{\max}\limits_{\mathfrak{s} \in {\rm Spin^c}(Y)}  \left\{ \frac{1}{2}d(Y,\mathfrak{s})- \frac{1}{2} d(Y, \mathfrak{s}+PD[K]) \right\} \leq \|K\|_{Y \times I}^{\partial } + \frac{1}{2}.
\end{equation}
%Floer simple knots attain both equalities, and consequently they are rational slice genus minimizers in their homology class.
\end{theorem}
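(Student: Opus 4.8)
The plan is to combine the mapping-cone version of the adjunction inequality (Proposition~\ref{mappingconeAdjunction}) with the non-surjectivity analysis of the circular mapping cone (Lemmas~\ref{nonsurint} and~\ref{notsurpat}), together with the symmetry results of this section, to force the $V$- and $H$-invariants to vanish at specific relative $\mathrm{Spin}^c$ structures. Concretely, suppose $F$ is a Seifert framed rational slice surface realizing $\|K\|^{\partial}_{Y\times I}$, and capped off to $\widehat{F}$ using the fact that $\lambda_r$ is a framing, so $g(\widehat F)$ satisfies $2g(\widehat F)-2 = -\chi(F)$, hence $\frac{g(\widehat F)}{p} = \|K\|^{\partial}_{Y\times I} + \frac{1}{2} - \frac{1}{p}$ (I will need to chase the Euler characteristic bookkeeping through the capping carefully). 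First I would fix $\mathfrak{s}\in\mathrm{Spin^c}(Y)$ and consider a relative $\mathrm{Spin}^c$ structure $\xi$ with underlying $\mathfrak{s}$ and Alexander grading $A_{Y,K}(\xi) \geq \frac{g(\widehat F)}{p}$; by Lemma~\ref{AandC1S} all the $\xi + i\cdot PD[\lambda_r]$ appearing in the circular mapping cone share this Alexander grading, so Proposition~\ref{mappingconeAdjunction} applies and $\mathfrak{D}^-_{\mathfrak{s}}$ is surjective.

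Next I would run the contrapositive of Lemma~\ref{notsurpat} and Lemma~\ref{nonsurint}: surjectivity of $\mathfrak{D}^-_{\mathfrak{s}}$ forces the circular labeling to contain no $\ast$, and (more delicately) to contain no $[+,-]$ interval — which, since the cone is circular, means the labels cannot contain both a $+$ and a $-$; so they are all $\circ$'s and $+$'s, or all $\circ$'s and $-$'s. I then want to conclude $V^-_\xi(K)=0$ for the particular $\xi$ that supports $\nu^+_{\mathfrak{s}}$. This is where the conjugation symmetry enters: by Corollary~\ref{xitjxit} (via $V^-_\xi = H^-_{\tilde J \xi}$) together with the identification $G_{Y,K}(\tilde J\xi) = JG_{Y,K}(\xi) - PD[K]$ and $A_{Y,K}(\tilde J\xi) = -A_{Y,K}(\xi)$, the mapping cone for the conjugate data $J\mathfrak{s}-PD[K]$ at Alexander grading $-A_{Y,K}(\xi)$ has the ``mirror-image'' labeling — its $+$'s become $-$'s. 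Applying the adjunction surjectivity simultaneously to $\mathfrak{s}$ at grading $\ge \frac{g(\widehat F)}{p}$ and to its conjugate, and intersecting the two constraints (all-$\circ$-or-$+$ for one, all-$\circ$-or-$-$ for the mirror), forces all labels to be $\circ$ at that grading, i.e.\ $V^-_{\xi}=H^-_{\xi}=0$. Hence any $\xi\in\underline{\mathrm{Spin^c}}(Y,K,\mathfrak s)$ with $A_{Y,K}(\xi)\ge \frac{g(\widehat F)}{p}$ has $V^-_\xi=0$, so $\nu^+_{\mathfrak{s}}(Y,K)\le \frac{g(\widehat F)}{p} = \|K\|^{\partial}_{Y\times I}+\frac12 - \frac1p \le \|K\|^{\partial}_{Y\times I}+\frac12$; taking the max over $\mathfrak{s}$ gives \eqref{nus0g}. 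Then \eqref{ds0g} follows immediately from Corollary~\ref{nud}, which says $\nu^+_\mathfrak{s}(Y,K) \ge \frac12 d(Y,\mathfrak{s}) - \frac12 d(Y,\mathfrak{s}+PD[K])$, combined with \eqref{nus0g}.

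There is a subtlety I would need to handle about whether a suitable $\xi$ with $A_{Y,K}(\xi)\ge \frac{g(\widehat F)}{p}$ but small enough that the argument is tight actually exists — i.e.\ one should check the inequality threshold is attained or approached by the discrete set of Alexander gradings; but since $\nu^+_{\mathfrak s}$ is defined as a minimum over those $\xi$ with $V^+_\xi=0$, it suffices to know \emph{some} $\xi$ above the threshold has $V^-_\xi=0$, which the argument provides, and then monotonicity~\eqref{Vd} is not even needed for this direction. A second point of care: the genus-zero case $g(\widehat F)=0$ must be treated with twisted coefficients exactly as flagged in the footnote to the adjunction inequality in Section~\ref{4-dimensional rational slice genus}.

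\textbf{Main obstacle.} I expect the crux to be the symmetry step: correctly bookkeeping the conjugation $\tilde J$ through the circular mapping cone for $\lambda_r$-surgery — matching $\xi + i\cdot PD[\lambda_r]$ with its conjugate, tracking the sign changes in Alexander grading and the $+\leftrightarrow-$ relabeling, and verifying that the adjunction hypothesis (a genus bound at a \emph{fixed} Alexander grading across the whole $\mathbb{Z}_p$-orbit) transports correctly to the conjugate $\mathrm{Spin}^c$ structure. Getting the orientation conventions consistent here (the footnotes in the excerpt warn repeatedly about differing conventions between \cite{NV}, \cite{NW}, \cite{OSl}, and \cite{OSk}) is the delicate part; once the mirror labeling is pinned down, the combinatorics of Lemma~\ref{notsurpat} does the rest.
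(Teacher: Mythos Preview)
Your approach has a genuine gap at the symmetry step. You correctly deduce that surjectivity of $\mathfrak{D}^-_{\mathfrak{s}}$ (at a grading $A\ge g(\widehat F)/p$) forces the circular cone to have no $\ast$ and not both a $+$ and a $-$; so its labels are all $\circ$/$+$ or all $\circ$/$-$. In the all $\circ$/$+$ case you do get $V^-_\xi=0$. The problem is ruling out the all $\circ$/$-$ case. Your plan is to apply conjugation: $\tilde J$ carries the orbit $\{\xi+i\,PD[\lambda_r]\}$ to the orbit $\{\tilde J\xi+j\,PD[\lambda_r]\}$, and Corollary~\ref{xitjxit} says the labels there are the swapped ones, i.e.\ all $\circ$/$+$. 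But this is a \emph{different} circular cone (a different $\mathrm{Spin}^c$ structure on $Y_{\lambda_r}$), and the adjunction constraint on that cone is again ``all $\circ$/$+$ or all $\circ$/$-$''. So the conjugate being all $\circ$/$+$ is perfectly consistent with its own surjectivity; the two constraints do not intersect on the same object, and no contradiction arises. Hence you cannot conclude $V^-_\xi=0$, and the bound on $\nu^+_{\mathfrak s}$ does not follow.

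The paper proceeds differently and uses ingredients you have not invoked. Rather than showing $V^-=0$ at all high gradings, it fixes the specific $\xi_{\rm top}$ supporting $\nu^+(Y,K)$ and splits on its label. If $\mathfrak A^-_{\xi_{\rm top}}=+$, then $\mathfrak A^-_{\xi_{\rm top}-PD[\mu]}=\ast$; Lemma~\ref{notsurpat} makes that cone non-surjective, so Proposition~\ref{mappingconeAdjunction} forces its grading below $g(\widehat F)/p$, and integrality of $p\cdot A_{Y,K}(\xi_{\rm top})$ sharpens this to \eqref{nus0g}. If $\mathfrak A^-_{\xi_{\rm top}}=\circ$, the paper passes to the \emph{mirror} $(-Y,-K)$ (not merely the $\tilde J$-conjugate inside $(Y,K)$): if the mirror's middle structure has label $\ast$, one again runs the non-surjectivity argument on $(-Y,-K)$; if it too is $\circ$, then Proposition~\ref{V=V=0} (Hom's splitting result) produces a distinguished generator, and the proof is finished by appealing to Hedden--Raoux's $\tau$-bound \cite[Corollary~5.4]{HR}. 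Your outline omits both the mirror-knot case analysis and the Hedden--Raoux input, and I do not see how to dispense with them along your route.
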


\begin{proof}
Suppose that the knot $K$ has order $p$. Then by assumption, $[\partial S]=[\partial F]=p \lambda_r$. By Corollary \ref{nud}, we have that (\ref{nus0g}) implies (\ref{ds0g}) directly, so it suffices to prove (\ref{nus0g}). Recall that \[\nu^+_\mathfrak{s}(Y,K)={\min}\{A_{Y,K}(\xi)\,|\, \xi \in \underline{\rm Spin^c}(Y,K,\mathfrak{s}) \,\, {\rm and} \,\, V^{\pm}_{\xi}(K)=0\},\]
and 
$$\nu^+(Y,K)=\mathop{\max}\limits_{\mathfrak{s} \in {\rm Spin^c}(Y)} \nu^+_{\mathfrak{s}}(Y,K).$$

\begin{comment}
Let $\xi_{\rm top}$ be a relative $\rm Spin^c$ structure which supports $\nu^+(Y,K)$, that is, $\xi_{\rm top}$ is a relative $\rm Spin^c$ structure we get when we take
\[\mathop{\max}\limits_{\mathfrak{s} \in {\rm Spin^c}(Y)} \mathop{\min}\limits_{\xi} \left\{A_{Y,K}(\xi)|\xi \in \underline{\rm Spin^c}(Y,K,\mathfrak{s}) \,\, {\rm and}\,\, V^+_{\xi}(K)=0  \right\}.\]
\end{comment}

Let $\xi_{\rm top}^\mathfrak{s}$ be the relative $\rm Spin^c$ structure that supports $\nu^+(Y,K,\mathfrak{s})$ for each $\mathfrak{s}$, among which $\xi_{\rm top}$ is assumed to be the one that supports $\nu^+(Y,K)$. (If there are more than one such relative $\rm Spin^c$ structures $\xi_{\rm top}^\mathfrak{s}$ with the same Alexander grading when we maximize over $\mathfrak{s} \in {\rm Spin^c}(Y)$, then just pick one of them.)  By definition, $V^-_{\xi_{\rm top}}=0$, so $\mathfrak{A}^-_{\xi_{\rm top}}(Y,K)=+$ or $\circ$ in Rasmussen's notation depending on whether $H^-_{\xi_{\rm top}}>0$ or $H^-_{\xi_{\rm top}}=0$. We split it into two cases.
%If $\mathfrak{A}^-_{\xi_{\rm top}}(Y,K)=+$, then $\mathfrak{A}^-_{\xi_{\rm top}-PD[\mu]}(Y,K)=\ast$. We split it as two cases:

\medskip
\noindent
\textbf{Case 1}: Suppose $\mathfrak{A}^-_{\xi_{\rm top}}(Y,K)=+$. Clearly, $V^-_{\xi_{{\rm top}-PD[\mu]}}>0$ by the choice of $\xi_{\rm top}$. Lemma \ref{lemmaV-H} implies that $H^-_{\xi_{{\rm top}-PD[\mu]}}>0$, and hence $\mathfrak{A}^-_{\xi_{\rm top}-PD[\mu]}(Y,K)=\ast$. Let 
\[\mathfrak{s}'=G_{Y_{\lambda_r},K_{\lambda_r}}(\xi_{\rm top}-PD[\mu]+i \cdot PD[\lambda_r]) \in {\rm Spin^c}(Y_{\lambda_r})\]
be the underlying $\rm Spin^c$ structure, which is independent of the value of $i \in \mathbb{Z}_p$.
Thus $\mathfrak{A}^-_{\mathfrak{s}'}(Y,K)$ contains a $\ast$, and Lemma \ref{notsurpat} implies that $\mathfrak{D}^-_{\mathfrak{s}'}$ is not surjective. Then we apply Proposition \ref{mappingconeAdjunction} and obtain
\[\frac{1}{2p} \langle c_1(\mathfrak{s}'), [\widehat{S}] \rangle=A_{Y,K}(\xi_{\rm top}-PD[\mu]+i \cdot PD[\lambda_r]) < \frac{g(\widehat{F})}{p}=\frac{g(F)}{p}.\]
Note that
\begin{align*}
A_{Y,K}(\xi_{\rm top}-PD[\mu]+i \cdot PD[\lambda_r])&=A_{Y,K}(\xi_{\rm top}-PD[\mu])\\
&=\frac{\langle c_1(\xi_{\rm top})-2PD[\mu], [S]\rangle + [\mu] \cdot [S]}{2 [\mu] \cdot [S]}\\
&=\frac{\langle c_1(\xi_{\rm top}), [S] \rangle -[\mu] \cdot [S]}{2 [\mu] \cdot [S]}\\
&=A_{Y,K}(\xi_{\rm top})-1. %\\
\end{align*}
 Thus,
\begin{equation} \label{weakinequality}
\nu^+(Y,K)-1=A_{Y,K}(\xi_{\rm top})-1< \frac{g(F)}{p}.
\end{equation}
We want to tighten the above inequality.  Recall that $\nu^+(Y,K)=A_{Y,K}(\xi_{\rm top})=\dfrac{\langle c_1(\xi_{\rm top}), [S] \rangle + p}{2p}$.  From Turaev \cite[Chapter VI.1.2]{Tur}, we know that $\langle c_1(\xi_{\rm top}), [S] \rangle$ has the same parity as $p$, the number of boundary components of $S$. Hence $p\cdot \nu^+(Y,K)=\dfrac{\langle c_1(\xi_{\rm top}), [S] \rangle + p}{2}$ is an integer. Consequently, (\ref{weakinequality}) can be tighted to
\[\nu^+(Y,K)-\frac{p-1}{p} \leq \frac{g(F)}{p}.\]
Also,
\begin{align*}
\|K\|_{Y \times I}^{\partial }+\frac{1}{2}=\mathop{\min}\limits_{F} \dfrac{-\chi(F)}{2p}+\frac{1}{2}=\mathop{\min}\limits_{F} \frac{-(2-2g(F)-p)+p}{2p}= \mathop{\min}\limits_{F} \frac{g(F)+p-1}{p},
\end{align*}
since $[\partial F]=p \cdot \lambda_r$ has $p$ boundary components. This gives 
\[\nu^+(Y,K) \leq \|K\|_{Y \times I}^{\partial }+\frac{1}{2}.\]

 \bigskip

 \noindent
\textbf{Case 2}: Suppose $\mathfrak{A}^-_{\xi_{\rm top}}(Y,K)=\circ$. Let $\mathfrak{t}=G_{Y,K}(\xi_{\rm top})$, then $\xi_{\rm top}$ is the middle relative $\rm Spin^c$ structure associated to $\mathfrak{t}$, i.e., $\xi_{\rm top}=\xi^0_{\mathfrak{t}}$. We now consider $\mathfrak{A}^-_{\xi^0_{J\mathfrak{t}}}(-Y,-K)$ of the mirror knot, where $\xi^0_{J\mathfrak{t}}$ is the middle relative $\rm Spin^c$ structure associated to the conjugate $\rm Spin^c$ structure $J\mathfrak{t}$. The complex $\mathfrak{A}^-_{\xi^0_{J\mathfrak{t}}}(-Y,-K)=\circ$ or $\ast$, and we prove the two subcases separately. 

\medskip

\begin{comment}
\begin{align*}
    \tau_{x}(Y,K)&:=\min \left\{A_{Y,K}(\boldsymbol{x})\,\vert\, \boldsymbol{x} \in \widehat{CF}(Y) \,\, {\rm and} \,\, [\boldsymbol{x}]=x \in \widehat{HF}(Y) \right\}\\
&=A_{Y,K}(x) \\
&=A_{Y,K}(\xi^0_{\mathfrak{t}})\\
&=\nu^+(Y,K)    
\end{align*}

Now the conjugation symmetry implies that $CFK^{\infty}(Y, K, J\mathfrak{t}-PD[K])$ also has a single generator supported at $\tilde{J}{\xi^0_{\mathfrak{t}}}$ with no differential in or out. If we denote this generator by $-x$, then $-x$ is a nontrial class in $\widehat{HF}(Y, J\mathfrak{t}-PD[K])$.
%The isomorphism (\ref{CFisomorphism}) maps the class $x \in \widehat{HF}(Y, \mathfrak{t})$ to another nontrival class in $\widehat{HF}(Y, J\mathfrak{t}-PD[K])$, denoted by $-x$.
By (\ref{Axi=-AJxi}),
\[A_{Y,K}(-x)=-A_{Y,K}(x),\]
so 
\[\tau_{-x}(Y,K)=A_{Y,K}(-x)=-A_{Y,K}(x)=-\tau_{x}(Y,K).\]
Thus 
\begin{align*}
2\nu^+(Y,K)&=2\tau_x(Y,K)\\
&=\tau_x(Y,K)-\tau_{-x}(Y,K)\\
&\leq \tau_{\max}(Y,K)-\tau_{\min}(Y,K)\\
&\leq 2\|K\|_{Y \times I}^{\partial }+1,
\end{align*}
where the last inequality follows from Theorem \ref{taubound}.

\end{comment}

%Therefore we prove Inequality (\ref{nus0g}).

%By Lemma \ref{isoxitJxi}, there must be a generator in $\widehat{HFK}(Y,K,\tilde{J}\xi^0_{\mathfrak{t}})$, denoted by $-x$. By 

\medskip
\textbf{Subcase (2.\romannumeral1)}: Suppose $\mathfrak{A}^-_{\xi^0_{J\mathfrak{t}}}(-Y,-K)=\ast$. We will consider the conjugate relative $\rm Spin^c$ structure $\tilde{J}\xi^0_{J\mathfrak{t}}$, and by Corollary \ref{xitjxit}, $\mathfrak{A}^-_{\tilde{J}\xi^0_{J\mathfrak{t}}}(-Y,-K)=\ast$. The same argument as in Case 1 shows that 
\[A_{-Y,-K}(\tilde{J}\xi^0_{J\mathfrak{t}})<\frac{g(F)}{p}<\frac{g(F)+p-1}{p}\]
for any Seifert framed rational slice surface $F$ of $-K$ in $-Y$. Thus
\[A_{-Y,-K}(\tilde{J}\xi^0_{J\mathfrak{t}})<\|-K\|_{-Y \times I}^{\partial }+\frac{1}{2}=\|K\|_{Y \times I}^{\partial }+\frac{1}{2}.\]
By Proposition \ref{middlersc=d}, we have
\begin{align*}
A_{-Y,-K}(\xi^0_{J\mathfrak{t}})&=\frac{1}{2}d(-Y, J\mathfrak{t})-\frac{1}{2}d(-Y, J\mathfrak{t}+PD[-K])\\
&=-\frac{1}{2}d(Y, J\mathfrak{t})+\frac{1}{2}d(Y, J(\mathfrak{t}+PD[K]))\\
&=-\frac{1}{2}d(Y,\mathfrak{t})+\frac{1}{2}d(Y,\mathfrak{t}+PD[K])\\
&=-A_{Y,K}(\xi^0_{\mathfrak{t}}),
\end{align*}
where we used the symmetries $d(-Y,\mathfrak{s})=-d(Y,\mathfrak{s})$ and $d(Y,\mathfrak{s})=d(Y,J\mathfrak{s})$. Thus \[A_{-Y,-K}(\tilde{J}\xi^0_{J\mathfrak{t}})=-A_{-Y,-K}(\xi^0_{J\mathfrak{t}})=A_{Y,K}(\xi^0_{\mathfrak{t}}).\]
This implies
\[\nu^+(Y,K)=A_{Y,K}(\xi^0_{\mathfrak{t}})<\|K\|^{\partial}_{Y \times I}+\frac{1}{2}.\]

\medskip

\textbf{Subcase (2.\romannumeral2)}: Suppose $\mathfrak{A}^-_{\xi^0_{J\mathfrak{t}}}(-Y,-K)=\circ$. In this case, $V^-_{\xi^0_{\mathfrak{t}}}(Y,K)=V^-_{\xi^0_{J\mathfrak{t}}}(-Y,-K)=0$, so we can apply Proposition \ref{V=V=0} and conclude that there is a choice of basis such that $CFK^{\infty}(Y,K,\mathfrak{t})$ has a single generator $x$ supported at $\xi^0_{\mathfrak{t}}$ with no differential in or out. 
In this case, we have that $x \in \widehat{HF}(Y,\mathfrak{t})$ is a nontrival class and 
\[\nu^+(Y,K)=A_{Y,K}(x)=\tau_{x}(Y,K) :=\min \left\{A_{Y,K}(\boldsymbol{x})\,\vert\, \boldsymbol{x} \in \widehat{CF}(Y) \,\, {\rm and} \,\, [\boldsymbol{x}]=x \in \widehat{HF}(Y) \right\}.\]
Our bound (\ref{nus0g}) follows immediately from Hedden-Raoux \cite[Corollary 5.4]{HR}
$$2\tau_x(Y,K) \leq 2 \|K\|_{Y \times I}^\partial+1,$$
and this concludes the proof for Case 2.

\begin{comment}
For a Floer simple knot $K$, recall that its $\mathfrak{G}^-_{\mathfrak{s}}$ is of type (\romannumeral1) for any $\rm Spin^c$ structure $\mathfrak{s}$ (Remark \ref{FloersimpleType}), and so is its mirror $-K\subset -Y$. Then we end up in Case (2.\romannumeral1), and
\[\xi_{\rm top}=\xi^{\max}, \quad \nu^+(Y,K)=\tau_{\max}(Y,K)=-\tau_{\min}(Y,K)=A_{Y,K}(\xi^{\max}).\]
Therefore Floer simple knots attain the equality.
\end{comment}
\end{proof}

\section{The proof of main theorems}
\label{The proof of main theorems}

%\textcolor{blue}{Review the Morse trick in detail for primitive knots $K\in Y$.  Study the symmetry of the $\rm Spin^c$ structures.}

In this section, we prove our main theorems. Our strategy is to apply Ozsv\'{a}th-Szab\'{o}'s trick of Morse surgery and convert all knots whose rational longitude is not a framing into the framing case.

\subsection{Ozsv\'{a}th-Szab\'{o}'s trick of Morse surgery} 
\label{trick of Morse surgery}

For any knot $K$ of order $p$ in a rational homology 3-sphere $Y$, its rational longitude can be written as 
\[\lambda_r=p' \lambda + q' \mu \,\, {\rm for \,\, some} \,\, p', q' \in \mathbb{Z},\] 
where $\lambda$ is a framing we fix. Note that $p$ is a multiple of $p'$, and
\[[\partial F]=[\partial S]=k\lambda_r \,\, {\rm with} \,\, k=p/p'.\]
%is the number of boundary components of a rational Seifert surface $S$ (or a rational slice surface $F$ under our assumption). 

Let $q' \slash p'=m- n \slash p'$ with $0 \leq n < p'$ and $n, m \in \mathbb{Z}$. Applying the Slam-Dunk move, $Y_{q' \slash p'}(K)$ can be realized by surgery with coefficient $m$ along the knot 
\[K'=K \# O_{p' \slash n} \subset Y'=Y \# L(p',n),\] 
where $O_{p' \slash n} \subset L(p',n)$ is the so-called \textit{$U$-knot} in the lens space $L(p',n)$, which is obtained by viewing one component of the Hopf link as a knot inside $L(p',n)$, thought of as $p' \slash n$-surgery on the other component of the Hopf link. See Figure \ref{Uknot}. Let $\lambda'_r$ be the rational longitude for $K'$, then $\lambda'_r$ has slope $m$, so it is a framing. Note that $K'$ also has order $p$.  This trick of Morse surgery enables us to reduce all cases to the case we discussed in the previous section.

We claim that
\begin{equation}
\label{gFleqgF'}
\mathop{\min}\limits_{F'} g(F') \leq \mathop{\min}\limits_{F} g(F),
\end{equation}
where $F$ and $F'$ denote Seifert framed rational slice surfaces for $K$ and $K'$ respectively.  Note that the meridian disk in the glued solid torus along the other Hope link component on which we perform the $p'/n$-surgery is a rational Seifert surface of $O_{p' \slash n}$, and we denote this disk by $D_{p' \slash n}$. %$L(p',n)-N(O_{p' \slash n})$.  
Then for any Seifert framed rational slice surface $F$ for $K$, we can construct a Seifert framed rational slice surface $F'$ for $K'$ with $g(F')=g(F)$ by performing a band sum of $k$ copies of $D_{p' \slash n}$ and one copy of $F$ along $p$ bands.  This proves (\ref{gFleqgF'}).

\medskip
The next step is to relate the knot Floer homology of $K$ and $K'$.  The lens space $L(p',n)$ can be represented by a standard Heegaard diagram of genus one. There are $p'$ intersection points $x_{j}$ of the $\boldsymbol{\alpha}$ and $\boldsymbol{\beta}$ curves in the Heegaard disgram, and they represent $p'$ different $\rm Spin^c$ structures, denoted by $\mathfrak{t}_{j}$ with $j\in \mathbb{Z}_{p'}$. The $U$-knot $O_{p' \slash n}$ is an example of Floer simple knots whose knot Floer complex $CFK^{\infty}(L(p',n), O_{p' \slash n}, \mathfrak{t}_{j})$ is generated by a single generator $x_j$. %$\{x_{j}\}_{j \in \mathbb{Z}_{p'}}$ with $x_{j}$ representing $\rm Spin^c$ structure $\mathfrak{t}_{j}$. 
See Figure \ref{Heegaard diagram} for the (doubly) pointed Heegaard diagram for $L(5,1)$ and $O_{5/1}$.  Thus, K\"{u}nneth formula implies:

%There are $p'$ $\rm Spin^c$ structures on $L(p',n)$ denoted by $\mathfrak{t}_{j}$ with
%\[j \in N=\left\{-\frac{(p'-1)}{2}, -\frac{(p'-3)}{2}, \dots, -1, 0, 1, \dots, \frac{(p'-3)}{2}, \frac{(p'-1)}{2} \right\}, \]
%where $\mathfrak{t}_{j}$ satisfies that for any $\xi \in \underline{\rm Spin^c}(L(p',n),O_{n \slash p'},\mathfrak{t}_{j})$, 
%\[A_{L(p',n),O_{p' \slash n}}(\xi)=\frac{j'}{p'} \quad {\rm with} \quad j' \equiv j \pmod {p'}.\]

\begin{figure}[t]
\centering
\subfigure[The $U$-knot $O_{p'/n} \subset L(p',n)$.]{\label{Uknot}
\includegraphics[width=0.48\textwidth]{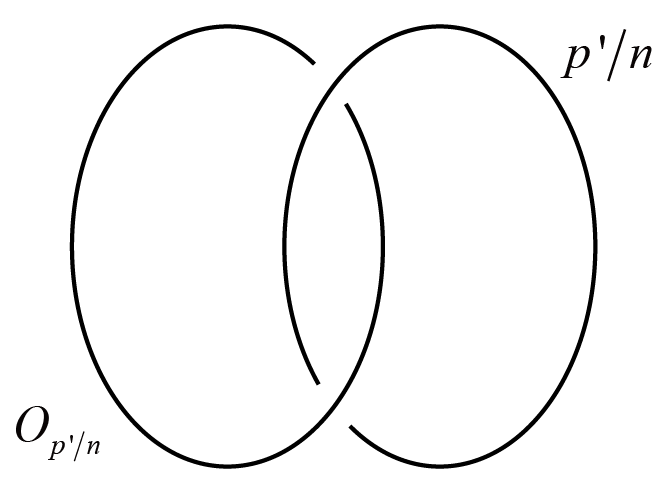}}\quad\quad
\subfigure[The (doubly) pointed Heegaard diagram for $L(5,1)$ and $O_{5/1}$.]{\label{Heegaard diagram}
\includegraphics[width=0.42\textwidth]{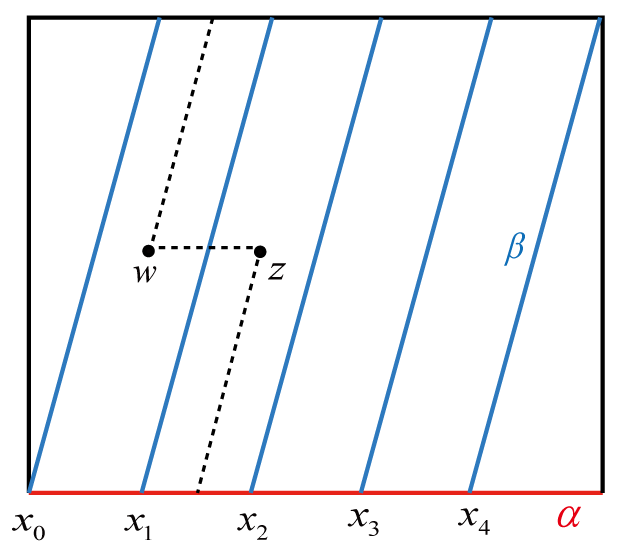}}
\caption{}
\label{UknotH}
\end{figure}

\begin{lemma}{\rm \cite[Corollary 5.3]{OSr} \cite[Lemma 3.8]{Rao}}
%Let $K$ be a knot in a rational homology 3-sphere $Y$ and $O_{p \slash n}$ the U-knot in $L(p,n)$. Let $Y'=Y \# L(p,n)$ and $K'=K \# O_{p \slash n}$. 
%Given $K$, $Y$, $O_{p \slash n}$, $K'$, $Y'$, $\mathfrak{s}_{i \slash p}$ and $\mathfrak{t}_{j \slash p}$ for $i, j \in N$ as above, then
For any $\rm Spin^c$ structure $\mathfrak{s} \in {\rm Spin^c}(Y)$ and any $\mathfrak{t}_{j} \in {\rm Spin^c}(L(p',n))$ with $j \in \mathbb{Z}_{p'}$,
\[CFK^{\infty}(Y',K', \mathfrak{s} \# \mathfrak{t}_{j}) \cong CFK^{\infty}(Y,K,\mathfrak{s})\]
with the Alexander grading shifted by $A_{L(p',n), O_{p' \slash n}}(x_{j})$.  %\frac{j}{p'}$. 
\label{CFKiso}
\end{lemma}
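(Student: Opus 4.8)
The plan is to deduce the statement from Ozsv\'{a}th--Szab\'{o}'s K\"{u}nneth formula for knot Floer homology under connected sums, combined with the fact that the $U$-knot $O_{p'/n}$ is Floer simple. First I would recall that for a connected sum of doubly pointed knots, the K\"{u}nneth formula gives a bifiltered quasi-isomorphism
\[
CFK^{\infty}(Y\#L(p',n),\, K\#O_{p'/n},\, \mathfrak{s}\#\mathfrak{t})\ \simeq\ CFK^{\infty}(Y,K,\mathfrak{s})\otimes_{\mathbb{F}[U,U^{-1}]} CFK^{\infty}(L(p',n),O_{p'/n},\mathfrak{t}),
\]
in which the $(i,j)$-bifiltration on the tensor product is the sum of the two bifiltrations. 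Here one also uses that $\mathrm{Spin^c}$ structures, and relative $\mathrm{Spin^c}$ structures on the knot complements, of a connected sum decompose as products, so that every $\mathrm{Spin^c}$ structure on $Y'=Y\#L(p',n)$ has the form $\mathfrak{s}\#\mathfrak{t}_j$ with $j\in\mathbb{Z}_{p'}$.

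Next I would invoke that $O_{p'/n}\subset L(p',n)$ is a simple knot: for each $\mathfrak{t}_j$, the complex $CFK^{\infty}(L(p',n),O_{p'/n},\mathfrak{t}_j)$ is freely generated over $\mathbb{F}[U,U^{-1}]$ by the single generator $x_j$ arising from the genus-one Heegaard diagram (Figure \ref{Heegaard diagram}), and carries no differential. Hence, as a bifiltered $\mathbb{F}[U,U^{-1}]$-complex, it is isomorphic to a rank-one free complex supported in algebraic grading $0$ and Alexander grading $A_{L(p',n),O_{p'/n}}(x_j)$. Tensoring $CFK^{\infty}(Y,K,\mathfrak{s})$ with such a rank-one complex leaves the differential, the $\mathbb{F}[U,U^{-1}]$-module structure, and the algebraic filtration unchanged, while shifting every Alexander grading up by $A_{L(p',n),O_{p'/n}}(x_j)$. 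Combining this with the K\"{u}nneth identification yields the asserted isomorphism
\[
CFK^{\infty}(Y',K',\mathfrak{s}\#\mathfrak{t}_j)\ \cong\ CFK^{\infty}(Y,K,\mathfrak{s})
\]
with the Alexander grading shifted by $A_{L(p',n),O_{p'/n}}(x_j)$.

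The one point requiring care — and essentially the only thing to check — is the normalization of the Alexander grading on the connected sum, namely the additivity
\[
A_{Y',K'}(\xi_1\#\xi_2)=A_{Y,K}(\xi_1)+A_{L(p',n),O_{p'/n}}(\xi_2)
\]
for relative $\mathrm{Spin^c}$ structures on the two knot complements. This follows from the fact that the capped-off Seifert surface $\widehat{S}$ for $K'$ is the connected sum of the corresponding surfaces for $K$ and for $O_{p'/n}$, so that $\langle c_1(\cdot),[\widehat{S}]\rangle$ is additive, together with the observation that $\lvert[\mu]\cdot[S]\rvert=p$ is unchanged because $K$ and $K'$ both have order $p$. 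Once this bookkeeping is in place the statement is immediate from K\"{u}nneth and Floer-simpleness, so in the writeup I would keep the argument to a short paragraph and refer to \cite[Corollary 5.3]{OSr} and \cite[Lemma 3.8]{Rao} for the detailed grading conventions. I do not anticipate a genuine obstacle here; the content is entirely formal once the grading normalizations are fixed.
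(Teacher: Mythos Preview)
Your proposal is correct and matches the paper's approach: the paper does not give a proof of this lemma at all, but simply cites \cite[Corollary 5.3]{OSr} and \cite[Lemma 3.8]{Rao} after the sentence ``Thus, K\"{u}nneth formula implies:''. Your write-up spells out exactly this K\"{u}nneth-plus-Floer-simpleness argument, which is what those references contain.
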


\begin{lemma}
\label{AUknot}
Denote $A_{\max}$ and $A_{\min}$ the maximum and minimum of the $p'$ Alexander gradings $A_{L(p',n), O_{p' \slash n}}(x_{j})$ with $j \in \mathbb{Z}_{p'}$, respectively. Then 
\begin{equation}
\label{Amax}
A_{\max}=-A_{\min}=\frac{p'-1}{2p'}.
\end{equation}

%For any intersection point $x_{j}$ with $j \in N$, $A_{L(p',n), O_{p' \slash n}}(x_{j})=\frac{j}{p'}$.
\end{lemma}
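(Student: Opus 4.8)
The plan is to recognize $D_{p'/n}$ as a \emph{minimal genus} rational Seifert surface for the $U$-knot, feed it into the rational genus formula of Theorem \ref{rknotgenus} to compute $A_{\max}-A_{\min}$, and then invoke the symmetry $\widehat{HFK}(Y,K,r)\cong\widehat{HFK}(Y,K,-r)$ to split this value symmetrically about $0$.

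In detail, I would first record that $O_{p'/n}$ has order $p'$ in $H_1(L(p',n))\cong\mathbb{Z}/p'$ and that $D_{p'/n}$ (the meridian disk of the surgery solid torus introduced just above) is a rational Seifert surface for it. Since $D_{p'/n}$ is a disk, $-\chi(D_{p'/n})=-1$, and $|[\partial D_{p'/n}]\cdot[\mu]|=p'$ by the count in Section \ref{4-dimensional rational slice genus}. Moreover $D_{p'/n}$ realizes the rational genus: any rational Seifert surface $S$ of $O_{p'/n}$ is connected with non-empty boundary, so $-\chi(S)\geq -1$, while $|[\partial S]\cdot[\mu]|=p'$ for every such $S$ (again Section \ref{4-dimensional rational slice genus}); hence $\frac{-\chi(S)}{2|[\partial S]\cdot[\mu]|}\geq \frac{-1}{2p'}=\frac{-\chi(D_{p'/n})}{2|[\partial D_{p'/n}]\cdot[\mu]|}$. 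Applying Theorem \ref{rknotgenus} with $S=D_{p'/n}$ then gives
\[A_{\max}-A_{\min}=\frac{-\chi(D_{p'/n})+|[\partial D_{p'/n}]\cdot[\mu]|}{|[\partial D_{p'/n}]\cdot[\mu]|}=\frac{-1+p'}{p'}=\frac{p'-1}{p'}.\]

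Finally, since $O_{p'/n}$ is Floer simple, the generators $x_j$ ($j\in\mathbb{Z}_{p'}$) sit in exactly the relative $\rm Spin^c$ structures with $\widehat{HFK}\neq 0$, so $A_{\max}$ and $A_{\min}$ above are precisely $\max_j A_{L(p',n),O_{p'/n}}(x_j)$ and $\min_j A_{L(p',n),O_{p'/n}}(x_j)$ as in the statement; and the symmetry $\widehat{HFK}(L(p',n),O_{p'/n},r)\cong\widehat{HFK}(L(p',n),O_{p'/n},-r)$ makes the set $\{A_{L(p',n),O_{p'/n}}(x_j)\}$ invariant under $r\mapsto -r$, so $A_{\max}=-A_{\min}$. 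Combining the two facts yields $2A_{\max}=\frac{p'-1}{p'}$, i.e. $A_{\max}=-A_{\min}=\frac{p'-1}{2p'}$, which is (\ref{Amax}). The argument is essentially formal once Theorem \ref{rknotgenus} is granted; the only point requiring a moment's care is the verification that $D_{p'/n}$ is genus-minimizing, which is handled by the Euler-characteristic bound together with the fact that $|[\partial S]\cdot[\mu]|$ is the constant $p'$ for all rational Seifert surfaces of $O_{p'/n}$. (Alternatively one could read the $p'$ Alexander gradings directly off the genus-one doubly-pointed Heegaard diagram in Figure \ref{UknotH}, but the route through Theorem \ref{rknotgenus} is shorter.)
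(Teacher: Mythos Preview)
Your proof is correct and follows essentially the same route as the paper: apply Theorem \ref{rknotgenus} to the disk $D_{p'/n}$ to obtain $A_{\max}-A_{\min}=\frac{p'-1}{p'}$, then use the Alexander grading symmetry to conclude $A_{\max}=-A_{\min}$. You are slightly more careful than the paper in that you explicitly verify $D_{p'/n}$ is genus-minimizing (which Theorem \ref{rknotgenus} requires) and spell out why the symmetry applies to the set $\{A(x_j)\}$ via Floer simplicity; the paper leaves both of these implicit.
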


\begin{proof}
As discussed earlier, the disk $D_{p' \slash n}$ is a rational Seifert surface of the $U$-knot $O_{p' \slash n}$. By Theorem \ref{rknotgenus}, %for $O_{p \slash n} \subset L(p,n)$, 
\[A_{\max}-A_{\min}=\dfrac{-\chi(D_{p' \slash n})+\left|[\partial D_{p' \slash n}] \cdot [\mu] \right|}{\left|[\partial D_{p' \slash n}] \cdot [\mu] \right|}=\dfrac{-1+p'}{p'}.\]
Equation (\ref{Amax}) then follows from the symmetry of the Alexander grading.  
\end{proof}
%Now we consider $K' \subset Y'$ which has order $p$. Clearly, $H_1(Y')=H_1(Y) \oplus \mathbb{Z}_p$. Thus $Y'$ has $p|H_1(Y)|$ $\rm Spin^c$ structures. 

\begin{remark}
With a little extra work, it can be shown that the set $\{A_{L(p',n), O_{p' \slash n}}(x_{j})\}_{j\in \mathbb{Z}_{p'}}$ is in fact an arithmetic progression between $A_{\min}=-\frac{p'-1}{2p'}$ and $A_{\max}=\frac{p'-1}{2p'}$.

\end{remark}

\begin{corollary}
\label{nu+difference}

\begin{equation}
\label{nuK'nuK}
\nu^+(Y',K')=\nu^+(Y,K)+\frac{p'-1}{2p'}.
\end{equation}
\end{corollary}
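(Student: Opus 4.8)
The plan is to reduce the identity to the corresponding statement for a single $\rm Spin^c$ structure and then decouple the maximisation over $\rm Spin^c$ structures. Write $a_j:=A_{L(p',n),O_{p'/n}}(x_j)$ for $j\in\mathbb{Z}_{p'}$. Since $Y'=Y\#L(p',n)$, every $\rm Spin^c$ structure on $Y'$ has the form $\mathfrak{s}\#\mathfrak{t}_j$ with $\mathfrak{s}\in{\rm Spin^c}(Y)$, and it suffices to show that for each such pair
\[\nu^+_{\mathfrak{s}\#\mathfrak{t}_j}(Y',K')=\nu^+_{\mathfrak{s}}(Y,K)+a_j.\]
Granting this, maximising over all $\rm Spin^c$ structures of $Y'$ gives
\[\nu^+(Y',K')=\max_{\mathfrak{s},j}\bigl(\nu^+_{\mathfrak{s}}(Y,K)+a_j\bigr)=\Bigl(\max_{\mathfrak{s}}\nu^+_{\mathfrak{s}}(Y,K)\Bigr)+\Bigl(\max_j a_j\Bigr)=\nu^+(Y,K)+A_{\max},\]
since the two maximisations are independent, and $A_{\max}=\frac{p'-1}{2p'}$ by Lemma \ref{AUknot}; this is exactly (\ref{nuK'nuK}).

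For the single-$\rm Spin^c$ statement, first I would invoke Lemma \ref{CFKiso}: for fixed $\mathfrak{s}$ and $j$, the bifiltered complex $CFK^{\infty}(Y',K',\mathfrak{s}\#\mathfrak{t}_j)$ is isomorphic to $CFK^{\infty}(Y,K,\mathfrak{s})$, the only change being a uniform shift of the Alexander grading by $a_j$. Because the action of $PD[\mu']$ on $\underline{\rm Spin^c}(Y',K',\mathfrak{s}\#\mathfrak{t}_j)$ and of $PD[\mu]$ on $\underline{\rm Spin^c}(Y,K,\mathfrak{s})$ both raise the Alexander grading by $1$, this isomorphism matches up the two $PD[\mu]$-torsors of relative $\rm Spin^c$ structures, producing a bijection $\xi'\leftrightarrow\xi$ with $A_{Y',K'}(\xi')=A_{Y,K}(\xi)+a_j$. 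Moreover $V^+_{\xi}$ depends only on the bifiltered chain homotopy type of $C_{\xi}$ --- it is read off from the projection $v^+_{\xi}\colon A^+_{\xi}\to B^+_{\xi}$, which is intrinsic to $C_{\xi}$ --- so $V^+_{\xi'}(K')=V^+_{\xi}(K)$. Substituting both facts into Definition \ref{nudef} gives
\[\nu^+_{\mathfrak{s}\#\mathfrak{t}_j}(Y',K')=\min\{A_{Y',K'}(\xi')\mid V^+_{\xi'}(K')=0\}=\min\{A_{Y,K}(\xi)+a_j\mid V^+_{\xi}(K)=0\}=\nu^+_{\mathfrak{s}}(Y,K)+a_j,\]
as desired.

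The step I expect to be the main obstacle is the bookkeeping just described: one must verify that the K\"unneth isomorphism of Lemma \ref{CFKiso} really carries the full $\mathbb{Z}\oplus\mathbb{Z}$ bifiltration (not merely the homology), so that the sub/quotient complexes $A^+_{\xi'}(K')$, $B^+_{\xi'}(K')$ and the projection $v^+_{\xi'}$ are honestly identified with their $(Y,K)$ counterparts, and that the induced identification of relative $\rm Spin^c$ structures is equivariant for the $PD[\mu]$-actions, so that the ``$\min$ over a $PD[\mu]$-coset'' defining $\nu^+_{\mathfrak{s}}$ is shifted by exactly $a_j$. Everything else is immediate from Lemma \ref{AUknot}.
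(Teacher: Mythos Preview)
Your proposal is correct and follows essentially the same route as the paper: first establish the per-$\rm Spin^c$ identity $\nu^+_{\mathfrak{s}\#\mathfrak{t}_j}(Y',K')=\nu^+_{\mathfrak{s}}(Y,K)+a_j$ from Lemma \ref{CFKiso}, then maximise and apply Lemma \ref{AUknot}. The paper's proof asserts the per-$\rm Spin^c$ identity in a single line as a direct consequence of Lemma \ref{CFKiso}, whereas you spell out the underlying bifiltration and $PD[\mu]$-equivariance bookkeeping; this extra care is appropriate and does not constitute a different argument.
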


\begin{proof}
From Lemma \ref{CFKiso}, we see $\nu^+(Y', K', \mathfrak{s} \# \mathfrak{t}_{j})=\nu^+(Y, K, \mathfrak{s})+A_{L(p',n), O_{p' \slash n}}(x_{j})$.  Maximizing over all $\rm Spin^c$ structures and then applying Lemma \ref{AUknot} gives (\ref{nuK'nuK}).
\end{proof}

\subsection{The proof of Theorems \ref{d-dleqprsg} and \ref{nuleqprsg}}

\begin{proof}[Proof of Theorem \ref{nuleqprsg}]
Note that $K'$ is a knot of order $p$ whose rational longitude is a framing. Thus, we can apply Theorem \ref{fsgbound} and get
\begin{equation}
\label{nuY'K'p}
\nu^+(Y',K') \leq \|K'\|^{\partial}_{Y' \times I} + \frac{1}{2}=\mathop{\min}\limits_{F'} \frac{g(F')+p-1}{p}.
\end{equation}

Now, we consider the rational slice genus of $K$.  Since a Seifert framed rational slice surface $F$ of $K$ has $k=p/p'$ boundary components, 
\[\|K\|_{Y \times I}^{\partial}+\frac{1}{2}=\mathop{\min}\limits_{F} \dfrac{-\chi(F)}{2p}+\frac{1}{2}=\mathop{\min}\limits_{F} \frac{-(2-2g(F)-k)}{2p}+\frac{1}{2}=\mathop{\min}\limits_{F} \frac{g(F)}{p} + \frac{p'+1}{2p'}-\frac{1}{p}.\]
By (\ref{gFleqgF'}),
$$\|K'\|_{Y' \times I}^{\partial }+\frac{1}{2}\leq \mathop{\min}\limits_{F} \frac{g(F)+p-1}{p}=\|K\|_{Y \times I}^{\partial }+\frac{1}{2}+\frac{p'-1}{2p'}$$
This combined with (\ref{nuK'nuK}) and (\ref{nuY'K'p}) gives

$$\nu^+(Y, K) =\nu^+(Y',K')-\frac{p'-1}{2p'}\leq |K'\|_{Y' \times I}^{\partial}+\frac{1}{2}-\frac{p'-1}{2p'} 
\leq \|K\|_{Y \times I}^{\partial} + \frac{1}{2}.$$

\medskip
\noindent
Finally, for a Floer simple knot $K$, Theorem \ref{Rasmussen conjecture NW} implies the inequalities of opposite direction:
$$\nu^+(Y,K) \geq \mathop{\max}\limits_{\mathfrak{s} \in {\rm Spin^c}(Y)} \left\{ \frac{1}{2}d(Y,\mathfrak{s})-\frac{1}{2}d(Y,\mathfrak{s}+PD[K]) \right\}=\|K\|_Y + \frac{1}{2} \geq \|K\|_{Y \times I}^{\partial}+\frac{1}{2}$$
Thus, it must attain the equality everywhere.
\end{proof}

Theorem \ref{d-dleqprsg} is an immediate corollary of Theorem \ref{nuleqprsg} and Corollary \ref{nud}.

\end{document}